\newtheorem{theorem}{Theorem}[section]
\newtheorem{lemma}[theorem]{Lemma}
\newtheorem{proposition}[theorem]{Proposition}
\newtheorem{corollary}[theorem]{Corollary}
\theoremstyle{definition}
\newtheorem{definition}[theorem]{Definition}
\theoremstyle{remark}
\newtheorem{remark}[theorem]{Remark}
\newtheorem*{claim}{Claim}
\numberwithin{equation}{section}
\newcommand{\bb}[1]{\mathbb{#1}}
\newcommand{\symmtrix}{\mathcal{S}^{n\times n}}
\title[A Generalization of Savin's small perturbation theorem]{ A Generalization of Savin's small perturbation theorem for fully nonlinear elliptic equations and applications}
\author{Zhenyu Fan}
\address{School of Mathematical Sciences, Peking University, Beijing, 100871, P. R. China}
\curraddr{}
\email{fanzhenyu@stu.pku.edu.cn}
\date{\today}
\begin{document}

\subjclass[2010]{35B65;\,
                35D40;\, 
                35J60.\,
                }
\keywords{Harnack inequality; Perturbation theory; Viscosity solutions; Partial regularity.}

\begin{abstract}
     In this note, we generalize Savin's small perturbation theorem \cite{Savin-07} to nonhomogeneous fully nonlinear equations $F(D^2u, Du, u,x)=f$ provided the coefficients and the right-hand side terms are H\"older small perturbations. As an application, we establish a partial regularity result for the sigma-$k$ Hessian equation $\sigma_{k}(D^2u)=f$.
\end{abstract}
\maketitle

\section{Introduction}
In this note, we study the interior $C^{2,\alpha}$ regularity of small perturbation  solutions to fully nonlinear equation 
 \begin{align}\label{main eq}
     F(D^2u, Du, u, x)=f(x) \qquad \mathrm{on}\ B_{1},
 \end{align}
 where $F:\mathcal{S}^{n\times n} \times \bb{R}^n \times \bb{R} \times B_{1}  \to \bb{R}$ is a smooth and locally uniformly elliptic operator and $f$ is a H\"older small perturbation. The locally uniform ellipticity is defined as following.

 \begin{definition}
     (i) $F$ is said to be \emph{elliptic}, if   
        \[F(M, p,z,x)\leq F(M+N, p,z,x), \quad \text{for}\ M,N\in\symmtrix\ \text{and}\ N\ge 0. \]
     
     (ii) $F$ is said to be \emph{uniformly elliptic}, if there exist constants $0<\lambda\leq \Lambda$, such that for any $M,N\in\symmtrix$ with $N\ge 0$.
     \[\lambda\|N\|\leq F(M+N,p,z,x)-F(M,p,z,x)\leq \Lambda\|N\|.\]

     (iii) $F$ is said to be \emph{locally uniformly elliptic} or \emph{$\rho\,$-uniformly elliptic}, if there exist constants $0<\lambda\leq \Lambda$, such that for any $ \|M\|,\|N\|, |p|, |z|\leq \rho$ with $N\ge 0$,
     \[\lambda\|N\|\leq F(M+N,p,z,x)-F(M,p,z,x)\leq \Lambda\|N\|. \]
 \end{definition}

\begin{remark}
    Locally uniform ellipticity or $\rho\,$-uniform ellipticity means that  $F$ is only uniformly elliptic a neighborhood of $\{ (0,0,0,x): x\in B_{1} \}$. If $\rho=\infty$, $\rho\,$-uniform ellipticity becomes uniform ellipticity.
\end{remark}

\begin{remark}
      Many important fully nonlinear PDEs, such as the Monge-Amp\`ere equation $\det D^2u=f$, the $\sigma_k\,$-Hessian equation $\sigma_{k}(D^2u)=f$, the $\sigma_{k}\,$-curvature equation $\sigma_{k}(\kappa(u))=f$, and the special Lagrangian equation $\mathrm{tr} \arctan D^2u =\Theta$,  are locally uniformly elliptic. This makes the study of locally uniformly elliptic equations particularly significant. 
\end{remark}

\begin{remark}\label{RMK: scaling cause problem}
    Locally uniformly elliptic equations have an important feature: scaling will change the uniform ellipticity. Take the simplest example, let $F=F(M)$ be a $\rho\,$-uniformly elliptic operator with ellipricity constant $0<\lambda\leq \Lambda$. and let $u$ be a solution to $F(D^2u)=0$. Consider the rescaled function $v=\delta u$, then $v$ solves
     \[ \widetilde{F}(D^2v):=\delta F\left(\dfrac{1}{\delta}D^2v\right)=0. \]
     Now the operator $\widetilde{F}(\cdot)=\delta F(\delta^{-1}\cdot)$ is $\delta\rho$-uniformly elliptic with same ellipticity constants. This obstacle is the major difficulty for developing the regularity theory. 
\end{remark}

    We aim to study the regularity of viscosity solutions to \eqref{main eq}, so we first recall the the definition of viscosity solutions for second order elliptic equations.
    \begin{definition}
        A continuous function $u$ is said to be a \emph{viscosity subsolution (resp. supersolution)} to \eqref{main eq} on $B_{1}$, if for any $C^2$ function $\varphi$ which touches $u$ from above (resp. below) at $x_{0}\in B_{1}$, we have
        \[  F(D^2\varphi(x_{0}), D\varphi(x_{0}), \varphi(x_{0}), x_{0} ) \ge\ \text{(resp. $\leq$)}\ f(x_{0}).  \]

        We say that $u$ is a \emph{viscosity solution} to \eqref{main eq} if it is both a viscosity subsolution and supersolution.
    \end{definition}
    
    We first briefly review the regularity theory of uniformly elliptic equations. Let $u$ be a viscosity solution to $F(D^2u)=0$ for uniformly elliptic and smooth $F$. In two dimensional case, Nirenberg \cite{Nirenberg-53} proved that $u$ is smooth. In higher dimensions, due to Krylov-Safonov's Harnack inequality \cite{Krylov-Safonov-1,Krylov-Safonov-2}, one can show that $u\in C^{1,\alpha}$ for some universal $\alpha$, see \cite[Chapter 4 and 5]{CCbook}. If we require $F$ is convex or concave in additional, Evans \cite{Evans} and Krylov \cite{Krylov} proved that $u\in C^{2,\alpha}$, hence smooth by Schauder theory and bootstrap argument, see also \cite[Chapter 6]{CCbook}. Later, many authors weakened the convexity/concavity condition in the Evans-Krylov theory. We refer to Caffarelli-Yuan \cite{Caffarelli-Yuan}, Caffarelli-Cabr\'e \cite{CC2003}, Collins \cite{Collins} and Goffi \cite{Goffi}. Moreover, If $F$ has the Liouville property, that is, every $C^{1,1}$ viscosity entire solution to $F(D^2u)=0$ with bounded Hessian must be a quadratic polynomial. Then the Evans-Krylov estimate still holds, that is, $u\in C^{2,\alpha}$ provided $u\in C^{1,1}$. This was proved by Huang \cite{Huang-2002} and Yuan \cite{Yuan-2001} via VMO type estimates. For general $F$, one even cannot expect the $C^{2}$ regularity, due to the counterexample of Nadirashivili-Vl\u adu\c t \cite{Nadirashivili}.

    Without the convexity or concavity condition, Savin \cite{Savin-07} first proved the following small perturbation theorem. 

    \begin{theorem}[Savin]\label{THM: Savin thm}
        For $0<\alpha<1$. Suppose $F: \symmtrix \times \bb{R}^n\times \bb{R}\times B_1\to \bb{R}$ satisfies 
        
    $\mathrm{H}1)$ $F$ is elliptic and $\rho\,$-uniformly elliptic with ellipticity constants $0<\lambda\leq \Lambda$;
    
    $\mathrm{H}2)$ $F(0,0,0,x)\equiv 0$.
    
    $\mathrm{H}3)$  $F\in C^2$ and $\|D^2F\|\leq K$ in a $\rho\,$-neighborhood of the set $\{(0,0,0,x): x\in B_1\}$.

    Let $u\in C(B_{1})$ be a viscosity solution to 
        \[F(D^2u, Du, u, x)=0\qquad \mathrm{on}\ B_{1}.\]
        There exist constants $\delta,C$, depending only on $n,\alpha, \rho,\lambda,\Lambda$ and $K$, such that if 
        \[ \|u\|_{L^{\infty}(B_{1})}\leq \delta, \]
         then $u\in C^{2,\alpha}(B_{1/2})$ with 
        \[\|u\|_{C^{2,\alpha}(B_{1/2})}\leq C.\]
    \end{theorem}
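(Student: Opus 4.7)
The proof strategy is the classical compactness-plus-iteration scheme of Caffarelli, adapted by Savin to locally uniformly elliptic equations without any concavity assumption on $F$. The essential point is that when $\|u\|_{L^\infty(B_1)}$ is tiny, the Hessian and gradient of $u$ are (in a weak sense) kept near $0$, so the arguments of $F$ stay in the region where local uniform ellipticity holds; Taylor expanding $F$ around $(0,0,0,x)$ one linearizes the equation at leading order, and then one exploits the $C^{2,\alpha}$ regularity of linear constant-coefficient equations.

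\emph{Step 1 (Approximation by linear solutions).} I would prove the following compactness lemma: for every $\eta>0$ there exists $\delta_0=\delta_0(\eta,n,\lambda,\Lambda,\rho,K)>0$ such that if $\|u\|_{L^\infty(B_1)}\le \delta_0$ and $u$ solves the equation, then one can find a classical solution $h$ of a linear uniformly elliptic equation $a_{ij}(x)\partial_{ij}h+b_i(x)\partial_i h+c(x)h=0$ with $\|u-h\|_{L^\infty(B_{3/4})}\le \eta\delta_0$. The proof is by contradiction: given a counterexample sequence $u_k$ with $\delta_k:=\|u_k\|_{L^\infty(B_1)}\to 0$, set $v_k:=u_k/\delta_k$. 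Then $v_k$ solves the rescaled equation
\[
\widetilde F_k(D^2 v_k,Dv_k,v_k,x):=\delta_k^{-1}F_k(\delta_k D^2 v_k,\delta_k Dv_k,\delta_k v_k,x)=0,
\]
and $\widetilde F_k$ is uniformly elliptic (with the original constants $\lambda,\Lambda$) on the expanding region $\{\|M\|,|p|,|z|\le \rho/\delta_k\}$. The Krylov--Safonov interior Hölder estimate inside this region gives uniform $C^\alpha_{\mathrm{loc}}$ bounds on $v_k$; the $C^2$ bound on $F$ guarantees $\widetilde F_k$ converges locally uniformly to a linear operator in $(M,p,z)$, and along a subsequence its coefficients converge to continuous coefficients $(a_{ij},b_i,c)$. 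Stability of viscosity solutions under uniform convergence then makes $v_\infty=\lim v_k$ a viscosity (hence classical) solution of the limiting linear equation, contradicting the assumption.

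\emph{Step 2 (Iteration by rescaling).} Having chosen a small $r\in(0,1)$ and invoked the approximation lemma with $\eta=r^{2+\alpha}/C$, the Taylor quadratic $P$ of $h$ at the origin satisfies $|u-P|\le \delta_0 r^{2+\alpha}$ on $B_r$, with $|DP(0)|,\|D^2P(0)\|\le C$. I would then rescale
\[
\tilde u(x):=\frac{u(rx)-P(rx)}{r^{2+\alpha}},\qquad x\in B_1,
\]
which solves a new equation of the same form, $\widetilde F(D^2\tilde u,D\tilde u,\tilde u,x)=0$, where $\widetilde F$ is built by absorbing $P$ and the dilation into the arguments of $F$. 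Choosing $r$ sufficiently small and $\delta_0$ sufficiently small ensures $\widetilde F$ is still $\rho$-uniformly elliptic with $\|D^2\widetilde F\|\le K$ in the relevant neighborhood, and $\|\tilde u\|_{L^\infty(B_1)}\le \delta_0$. Iterating gives quadratic polynomials $P_k$ with $|u-P_k|\le \delta_0 r^{k(2+\alpha)}$ on $B_{r^k}$ and $\|D^2P_{k+1}-D^2P_k\|\lesssim r^{k\alpha}$. Summing yields a limiting quadratic at the origin and a $C^{2,\alpha}$ estimate there; since the whole argument is translation-invariant, it delivers the claimed interior estimate on all of $B_{1/2}$.

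\emph{Main obstacle.} The delicate point, as flagged in Remark \ref{RMK: scaling cause problem}, is that rescaling shrinks the ellipticity region, so I must certify that after each iteration the rescaled operator $\widetilde F$ still enjoys $\rho$-uniform ellipticity (with the same constants) and a uniform $C^2$ bound on its arguments. This forces a careful simultaneous induction: one tracks $|P_k(0)|$, $|DP_k(0)|$, $\|D^2P_k(0)\|$ together with the decay $|u-P_k|\le \delta_0 r^{k(2+\alpha)}$, showing all three Taylor coefficients stay well below $\rho/2$ because the incremental Hessian changes are geometrically summable and can be absorbed by choosing $\delta_0$ universally small. Without this, the iteration would break down after finitely many steps. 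Note that it is precisely here that the smoothness of $F$ is used in an essential way, since without a concavity assumption the $C^{2,\alpha}$ regularity of the linear limit in Step 1 must come entirely from the Hölder regularity of its coefficients.
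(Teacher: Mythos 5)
Your overall architecture (compactness to a linear equation, then Caffarelli-type iteration against quadratics) is the right one and matches Savin's proof and the generalization given in Section~\ref{Section: proof of main thm} of this paper. However, there is a serious gap in Step~1, at exactly the point you dismiss as ``Krylov--Safonov inside this region gives uniform $C^\alpha_{\mathrm{loc}}$ bounds.'' After rescaling $v_k=u_k/\delta_k$, the operator $\widetilde F_k$ is only $(\rho/\delta_k)$-\emph{locally} uniformly elliptic, so the functions $v_k$ only lie in the restricted Pucci class $S^*_{\rho/\delta_k}$, where test functions must satisfy $\|D^2\varphi\|,|D\varphi|,|\varphi|\le\rho/\delta_k$. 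The classical Caffarelli--Cabr\'e proof of Krylov--Safonov proceeds by a measure-decay estimate followed by \emph{rescaling and iterating}; each rescaling step shrinks the ellipticity region (this is precisely Remark~\ref{RMK: scaling cause problem}), so after a few iterations the argument requires a paraboloid opening exceeding $\rho/\delta_k$ and breaks down. For a fixed $k$ this does not give a H\"older estimate at all scales, and you have not explained why one still obtains enough regularity to extract a uniformly convergent subsequence. This is not a technicality: establishing a usable H\"older estimate in the $\rho$-restricted Pucci class is the main technical content of Savin's argument, and of Section~\ref{Section: Weak Harnarck} here, where the sliding-paraboloid method (iterating by \emph{enlarging the paraboloid opening} rather than rescaling) produces a weak Harnack inequality and a H\"older estimate valid only at scales $\gtrsim\rho^{-1/2}$ (Theorem~\ref{THM: weak Harnack}, Corollary~\ref{coro: translated Holder reg}); one then observes that this critical scale $d_k\to 0$ as $\delta_k\to 0$, and patches the moduli of continuity across $k$ to get equicontinuity of $\{v_k\}$.

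Your ``Main obstacle'' paragraph correctly flags the rescaling issue but locates it only in Step~2 (tracking the Taylor coefficients of $P_k$), where it is genuine but manageable bookkeeping. The more fundamental occurrence is already inside Step~1: before any flatness iteration begins, you need a version of the H\"older estimate whose proof does not rescale the domain, otherwise compactness never gets off the ground. Repairing this requires either quoting a weak Harnack for the restricted Pucci class $S^*_\rho$ with explicit dependence on $\rho$, or reproducing the sliding-paraboloid construction (Lemmas~\ref{measure lemma}--\ref{lemma of enlarge opening}). Once that is in hand, the rest of your outline---passing to a linear limit with coefficients $a_{ij}(x)=D_M F(0,0,0,x)$, Schauder regularity for the limit, and the Taylor-polynomial iteration keeping $\|D^2P_k\|$ summably small and below $\rho/2$---is sound and essentially the same as what the paper does (the paper prefers to zoom in at a point so the limit has constant coefficients, but your variable-coefficient version also works since $D_M F(0,0,0,\cdot)$ is continuous under H3).
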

    
    Theorem \ref{THM: Savin thm} states that if a solution is sufficiently closed to some model solution (say a constant or a polynomial), then it must be regular. Theorems of this type can be traced back to the De Giorgi-Allard's $\varepsilon$-regularity theorem \cite{DG,Allard} in the minimal surface theory (see also \cite{Giusti, Simon-GMT}).

    For parabolic and nonlocal version of Theorem \ref{THM: Savin thm}, we refer to \cite{Wang, Yu-Hui}.

    The purpose of this note is generalizing Theorem \ref{THM: Savin thm} to non-homogeneous equations. Our main result is as follows:

    \begin{theorem}\label{theorem: main thm}
        For $0<\alpha<1$. Suppose that $F: \symmtrix \times \bb{R}^n\times \bb{R}\times B_1\to \bb{R}$ satisfies 
        
        $\mathrm{H}1)$ $F$ is elliptic and $\rho\,$-uniformly elliptic with ellipticity constants $0<\lambda\leq \Lambda$;
    
        $\mathrm{H}2')$ $F(0,0,0,x)\equiv 0$ and $F$ satisfies the structure condition: there exist constants $b_{0}, c_{0}>0$, such that for any $\|M\|,|p|,|q|, |z|,|s|\leq \rho$ and $x\in B_1$, 
        \begin{equation}\label{eq: structure condition}
            |F(M,p,z,x)-F(M,q,s,x)|\leq b_{0}|p-q|+c_{0}|z-s|.
        \end{equation}
    
        $\mathrm{H}3')$  $F\in C^1$ and $D_MF$ is uniformly continuous in a $\rho\,$-neighborhood of the set $\{(0,0,0,x): x\in B_1\}$ with modulus of continuity $\omega_{F}$.

        Let $u\in C(B_{1})$ be a viscosity solution to 
        \[F(D^2u, Du, u, x)=f(x)\qquad \mathrm{on}\ B_{1}.\]
        Then there exist constants $\delta,C>0$, depending only on $n,\alpha, \rho,\lambda,\Lambda$, $b_{0}$, $c_{0}$ and $\omega_{F}$, such that if
        \[ |F(M,p,z,x)-F(M,p,z,x')|\leq \delta |x-x'|^{\alpha}\quad \text{for all}\ \|M\|,|p|,|z|\leq \rho\ \text{and}\ x,x'\in B_1, \]
        and        
        \[ \|u\|_{L^{\infty}(B_{1})}\leq \delta, \qquad \|f\|_{C^{0,\alpha}(B_{1})}\leq \delta, \]
         then $u\in C^{2,\alpha}(B_{1/2})$ with 
        \[\|u\|_{C^{2,\alpha}(B_{1/2})}\leq C.\]
    \end{theorem}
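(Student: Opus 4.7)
The proof follows Savin's compactness-and-iteration scheme, with the structure condition $\mathrm{H}2'$ playing the crucial role of absorbing lower-order terms and the nonhomogeneous right-hand side under rescaling.

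\textbf{Step 1 (Approximation lemma via compactness).} I first use H1 and H2' to rewrite
\[
F(D^2u, Du, u, x) = A(x):D^2u + G(x, Du, u),
\]
where $A(x) = \int_0^1 D_M F(tD^2u, Du, u, x)\, dt$ is uniformly elliptic and $|G(x,Du,u)|\leq b_0|Du|+c_0|u|$. Thus $u$ lies in the extremal class and Krylov--Safonov yields an interior $C^{1,\alpha_0}$ estimate. The key lemma asserts: there exist universal $\theta\in(0,1/2)$ and $\delta_1>0$ such that, whenever the hypotheses hold with smallness parameter $\leq\delta_1$, there is a quadratic polynomial $P$ with $F(D^2P,0,0,0)=f(0)$, $\|P\|_{C^2}\leq C_0\|u\|_\infty$, and $\|u-P\|_{L^\infty(B_\theta)}\leq\tfrac12\|u\|_\infty\theta^{2+\alpha}$. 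I would prove this by contradiction: for a failing sequence, the rescaled $v_k=u_k/\|u_k\|_\infty$ are precompact in $C^{1,\alpha_0}_{\mathrm{loc}}$; H3' together with the vanishing $x$-Hölder oscillation of $F_k$ allow passing to a limit $v_\infty$ satisfying a constant-coefficient linear uniformly elliptic equation (in viscosity sense, via stability). Schauder theory gives $v_\infty\in C^{2,\alpha}_{\mathrm{loc}}$, whose quadratic Taylor polynomial at the origin contradicts the failure.

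\textbf{Step 2 (Rescaling and iteration).} Given $P=P_0$, define $u_1(x)=(u(\theta x)-P_0(\theta x))/\theta^{2+\alpha}$ on $B_1$. A direct computation shows $u_1$ solves $\tilde F_1(D^2u_1,Du_1,u_1,x)=\tilde f_1(x)$ with
\[
\tilde F_1(M,p,z,x)=\theta^{-\alpha}\bigl[F(D^2P_0+\theta^\alpha M,\,DP_0(\theta x)+\theta^{1+\alpha}p,\,P_0(\theta x)+\theta^{2+\alpha}z,\,\theta x)-F(D^2P_0,DP_0(\theta x),P_0(\theta x),\theta x)\bigr],
\]
and an analogous $\tilde f_1$. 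One then verifies that $\tilde F_1$ retains H1, H2', and H3' with the same constants, that $\|u_1\|_\infty\leq\delta_1$, and that $\|\tilde f_1\|_{C^{0,\alpha}}$ together with the $x$-oscillation of $\tilde F_1$ stay $\leq\delta_1$. The structure condition H2' is central: it converts the polynomial contributions $DP_0(\theta x)$, $P_0(\theta x)$ (of sizes $\theta$ and $\theta^2$) into corrections bounded by $b_0|DP_0(\theta x)|+c_0|P_0(\theta x)|$, which gain a factor $\theta^{1-\alpha}$ upon dividing by $\theta^\alpha$. Iterating produces polynomials $\{P_k\}$ with $\{D^2P_k\}$ Cauchy and $\|u-P_k\|_{L^\infty(B_{\theta^k})}\lesssim\theta^{k(2+\alpha)}$, yielding a pointwise $C^{2,\alpha}$ estimate at the origin. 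Translation invariance (repeat at any $x_0 \in B_{1/2}$) promotes this to the interior $C^{2,\alpha}$ bound on $B_{1/2}$.

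\textbf{Main obstacle.} The hardest step is verifying in Step 2 that the iteration closes with the \emph{same} universal smallness constant $\delta_1$. A naive bound on the new Hölder data produces contributions of order $2\delta_1+C\delta_1\theta^{1-\alpha}$, which, if iterated, would blow up. To close the loop one strengthens the approximation lemma of Step 1 so that the factor $\tfrac12$ becomes an arbitrarily small $\eta$ (chosen after the universal constant $C$ in the Hölder bookkeeping is fixed), and then chooses $\theta$ small depending on $\alpha$ so that $\theta^{1-\alpha}$ absorbs the excess. A secondary technical burden is keeping the shifted Hessian argument $D^2P_0+\theta^\alpha M$ inside the $\rho$-region of local uniform ellipticity across iterations; this relies on the fact that $\|D^2 P_k\|\leq C_0\|u_k\|_{\infty}$ stays uniformly bounded so that $\sum_k \theta^{k\alpha}\|D^2P_k\|$ converges, keeping the cumulative shift within the ellipticity range.
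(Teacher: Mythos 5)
Your Step 1 has a fundamental gap that bypasses the main technical difficulty of the problem. You write $F(D^2u,Du,u,x)=A(x):D^2u+G(x,Du,u)$ with $A(x)=\int_0^1 D_MF(tD^2u,Du,u,x)\,dt$ and assert that ``$u$ lies in the extremal class and Krylov--Safonov yields an interior $C^{1,\alpha_0}$ estimate.'' This does not work here, for two reasons. First, $u$ is only a continuous viscosity solution, so the pointwise matrix $D^2u$ in the integrand is undefined and the decomposition is not available; even for $C^2$ solutions, $A(x)$ would be uniformly elliptic only if $\|D^2u\|,|Du|,|u|\leq\rho$ at every point, which is precisely what is \emph{not} known a priori. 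Second, and more importantly, $F$ is only $\rho$-\emph{locally} uniformly elliptic, and (as Remark~\ref{RMK: scaling cause problem} explains) the rescaling one would perform to run a Krylov--Safonov oscillation-decay argument shrinks the ellipticity neighborhood, so the classical Krylov--Safonov theorem simply does not apply. Consequently your claim of $C^{1,\alpha_0}_{\mathrm{loc}}$ precompactness of $v_k=u_k/\|u_k\|_\infty$ in the contradiction argument is unjustified — this is the heart of the matter, not a detail.

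The paper's proof devotes all of Section~\ref{Section: Weak Harnarck} to filling exactly this gap: it proves a weak Harnack inequality (Theorem~\ref{THM: weak Harnack}) and a \emph{partial} H\"older estimate (Theorem~\ref{Thm: Holder regularity}), valid only for scales $r\geq\sqrt{2\rho_0/\rho}$, using the method of sliding paraboloids. That method avoids rescaling the solution altogether and instead iterates by enlarging the opening of the touching paraboloids, which is what circumvents the loss of ellipticity under scaling. These partial H\"older bounds, applied to the rescaled sequence $v_k$ (whose effective $\rho$ grows like $r_k^{-\alpha}\rho\to\infty$), yield equicontinuity and $C^0$ (not $C^{1}$) compactness, which is all that is needed to pass to the limiting constant-coefficient equation. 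The remainder of your Step~2 (rescaling, the role of H2$'$ in absorbing the polynomial gradient/zeroth-order contributions with a gain $\theta^{1-\alpha}$, and the Caffarelli iteration to a pointwise $C^{2,\alpha}$ estimate) is in essence the paper's Section~\ref{Section: proof of main thm}, and your ``main obstacle'' about closing the iteration with the same $\delta_1$ is real but secondary; in the paper it is handled by building the $\delta r^\alpha$-smallness of $\|f-f(0)\|$ and of the $x$-oscillation of $F$ directly into the hypotheses of Lemma~\ref{lemma: improvement of flatness}, so the smallness is scale-invariant. To make your proposal correct you would need to replace the appeal to Krylov--Safonov with a proof of a H\"older/weak-Harnack estimate that is stable under the local uniform ellipticity, which is precisely what the paper's sliding-paraboloid machinery supplies.
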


   \begin{remark}
        When $F=F(M,x)$ is uniformly elliptic, Theorem \ref{theorem: main thm} was proved by dos Prazeres-Teixeira \cite{dos-Teixeira}. For general locally uniformly elliptic $F$, this result was first claimed by Lian-Zhang \cite{lian-zhang}. However, there is a mistake when proving weak Harnack in their original preprint and our proof is different from theirs. 
   \end{remark} 

    Let us explain the idea of proof step by step. We use the compactness argument: If the theorem were false, there would exist a sequence of small perturbation solutions with ``bad'' regularity.  The main task is to show that if we rescale these solutions, they will converge to a solution to the linearized operator of $F$ at the origin. Since this linearized operator is uniformly elliptic with constant coefficients, the limiting solution must be regular with good estimates. This leads to a contradiction.

    The most difficult step in the proof is to establish the convergence of the sequence mentioned above. To achieve this, we need to prove the weak Harnack inequality and H\"older estimates for locally uniformly elliptic equations, see Theorem \ref{THM: weak Harnack} and Theorem \ref{Thm: Holder regularity}.

    We employ the method of sliding paraboloids to prove the weak Harnack inequality. This method was first introduced by Cabr\'e \cite{Cabre-97} in his extension of Krylov-Safonov theory to manifolds with nonnegative sectional curvature. Later, Savin \cite{Savin-Thesis, Savin-Annals} generalized it and used it to solve the celebrated De Giorgi conjecture. This method has now become a standard approach to establish Harnack type estimates for non-uniformly elliptic equations, see \cite{Imbert-Silvestre}. It is worth mentioning that this method can also be employed to establish $W^{2,\varepsilon}$ estimates, we refer to \cite{Li-Li,Mooney-19, Baasandorj-Byun-Oh-24, Byun-Kim-Oh-25, Nascimento-Teixeira-25}.

    The advantage of this method is that it avoids the difficulties noted in Remark \ref{RMK: scaling cause problem}. We use paraboloids (or quadratic functions) to touch our solution. At the contact points, we get some useful information about the solution. Using the equation, we can also estimate the measure of the contact set. The key is that we do not rescale the solution and iterate as in the Caffarelli-Cabr\'{e}'s book \cite[Chapter 4]{CCbook}. Instead, we iterate by enlarging the opening of the touching paraboloids. This avoids the rescaling difficulties caused by nonuniform ellipticity.

    \subsection*{Applications}
    In the minimal surface theory, we can use the De Giorgi-Allard's $\varepsilon$-regularity theorem with blow-up analysis to prove partial regularity of minimal surfaces, which estimates the dimension of singular sets, see the book \cite{Giusti}. This gives us an idea that $\varepsilon$-regularity theorems can help establishing partial regularity results.

    For uniformly elliptic equations, Armstrong-Silvestre-Smart \cite{Armstrong-Silvestre-Smart} combined Lin's $W^{2,\varepsilon}$ estimates and Savin's $\varepsilon$-regularity theorem (Theorem \ref{THM: Savin thm}) to prove the following partial regularity result:

    \begin{theorem}[Armstrong-Silvestre-Smart]
        Let $F=F(M)$ be a uniformly elliptic operator with ellipticity constants $0<\lambda\leq\Lambda$ and satisfying $\mathrm{H}3')$, and let $u\in C(\Omega)$ be a viscosity solution to $F(D^2u)=0$ in a domain $\Omega\subset\bb{R}^n$. Then there is a universal constant $\varepsilon>0$ depending only on $n,\lambda$ and $\Lambda$, and a closed set $\Sigma\subset\Omega$ with Hausdorff dimension at most $n-\varepsilon$, such that $u\in C^{2,\alpha}(\Omega\setminus\Sigma)$ for every $0<\alpha<1$.
    \end{theorem}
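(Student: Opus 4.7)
The plan is to combine Lin's interior $W^{2,\varepsilon}$ estimate with the $\varepsilon$-regularity theorem (Theorem \ref{theorem: main thm} with $f = 0$), following the strategy of Armstrong--Silvestre--Smart. Lin's estimate provides a universal exponent $\varepsilon = \varepsilon(n, \lambda, \Lambda) > 0$ with $u \in W^{2,\varepsilon}_{\mathrm{loc}}(\Omega)$ and $\|D^2 u\|_{L^\varepsilon(\Omega')} \leq C \|u\|_{L^\infty(\Omega)}$ for every $\Omega' \Subset \Omega$; in particular the approximate Hessian $M(x_0) := D^2 u(x_0)$ exists almost everywhere and satisfies the pointwise relation $F(M(x_0)) = 0$.

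At such a good base point $x_0$, let $P$ denote the second-order Taylor polynomial of $u$ at $x_0$ and consider the rescaling
\[
v_r(y) \;:=\; \frac{u(x_0 + r y) - P(x_0 + r y)}{r^2}, \qquad y \in B_1.
\]
Then $v_r$ solves $\widetilde{F}(D^2 v_r) = 0$ with $\widetilde{F}(N) := F(M(x_0) + N)$, and $\widetilde{F}$ satisfies the hypotheses of Theorem \ref{theorem: main thm} with the same ellipticity constants and the same modulus of continuity (inherited from the global uniform continuity of $D_M F$). If some scale $r$ gives $\|v_r\|_{L^\infty(B_1)} \leq \delta_0$---equivalently $\sup_{B_r(x_0)} |u - P| \leq \delta_0 r^2$, where $\delta_0$ is the smallness from Theorem \ref{theorem: main thm}---then $v_r \in C^{2, \alpha}(B_{1/2})$ and hence $u \in C^{2, \alpha}$ near $x_0$. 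Accordingly, define
\[
\Sigma \;:=\; \Big\{x \in \Omega : \inf_{P\text{ quadratic}} \sup_{B_r(x)} |u - P| > \delta_0 r^2 \text{ for every } 0 < r < \tfrac{1}{2}\,\mathrm{dist}(x, \partial \Omega) \Big\}.
\]
The set $\Sigma$ is closed by translation-stability of the Savin-type smallness, and iterating Theorem \ref{theorem: main thm} at smaller scales at each $x_0 \in \Omega \setminus \Sigma$ upgrades the initial $C^{2, \alpha_0}$ regularity to $C^{2, \alpha}(\Omega \setminus \Sigma)$ for every $\alpha \in (0,1)$.

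The principal obstacle is the dimension estimate $\dim_{\mathcal H}(\Sigma) \leq n - \varepsilon$. The bridge is a Campanato-type polynomial approximation consequence of $W^{2,\varepsilon}$: for each $B_r(x_0) \Subset \Omega$ there exists a quadratic $P_{x_0, r}$ with
\[
\sup_{B_r(x_0)} |u - P_{x_0, r}| \;\leq\; C\, r^2 \left( \frac{1}{|B_{2r}(x_0)|} \int_{B_{2r}(x_0)} |D^2 u|^{\varepsilon}\, dx \right)^{1/\varepsilon}.
\]
Consequently, membership $x_0 \in \Sigma$ forces the rescaled $L^\varepsilon$-average of $|D^2 u|$ over $B_r(x_0)$ to remain bounded below by a fixed positive constant for all small $r$, i.e., $|D^2 u|^\varepsilon$ concentrates at rate $r^n$ around each $x_0 \in \Sigma$. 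A standard Vitali covering argument applied to $\Sigma$ using this concentration, combined with the finite $L^\varepsilon$-mass of $D^2 u$, yields $\mathcal H^{n - \varepsilon}(\Sigma) = 0$ and therefore $\dim_{\mathcal H}(\Sigma) \leq n - \varepsilon$. The two most delicate points---which I expect to consume most of the technical work---are (i) establishing the polynomial approximation estimate at the sub-unit integrability exponent $\varepsilon < 1$, where classical Campanato/Morrey arguments need adjustment, and (ii) promoting the $L^\varepsilon$-concentration into a genuine Hausdorff-dimension bound via the covering argument.
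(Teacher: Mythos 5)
The paper does not give a full proof of this theorem; it is cited from \cite{Armstrong-Silvestre-Smart}, and the introduction only sketches the intended argument. That sketch is unambiguous on the key technical ingredient: one first \emph{differentiates} $F(D^2u)=0$ in a direction $e$ to obtain a linear equation $F^{ij}(D^2u)\,D_{ij}(D_eu)=0$ in the viscosity sense, applies Lin's/Caffarelli's $W^{2,\varepsilon}$ estimate to $w=D_eu$, and thereby obtains a $W^{3,\varepsilon}$ estimate for $u$. Your proposal instead applies $W^{2,\varepsilon}$ directly to $u$ and skips the differentiation step, and that is where it breaks.

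The gap is quantitative. Savin's theorem (or Theorem \ref{theorem: main thm} with $f\equiv 0$) requires $\sup_{B_r(x_0)}|u-P|\leq\delta_0 r^2$ for some quadratic $P$ with controlled norm. The viscosity $W^{2,\varepsilon}$ estimate for $u$ says that for each $t>0$ the set where $u$ fails to admit two-sided touching paraboloids of opening $t$ has measure $\leq Ct^{-\varepsilon}$; at a good point $x_0$ with opening $t(x_0)$ this only gives $\sup_{B_r(x_0)}|u-P|\leq Ct(x_0)r^2$, which is of the critical size $r^2$ and is $\leq\delta_0 r^2$ only where $t(x_0)\leq\delta_0/C$ — a set of positive measure whose complement is not small. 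By contrast, from $W^{3,\varepsilon}$ of $u$ one gets, at a.e.\ $x_0$, a cubic Taylor-type bound $\sup_{B_r(x_0)}|u-P_{x_0}|\leq Ct(x_0)r^3$; this is $\leq\delta_0 r^2$ as soon as $r\leq\delta_0/(Ct(x_0))$, so the smallness kicks in at every point outside a set whose measure decays like $t^{-\varepsilon}$, and the covering argument then upgrades this to the Hausdorff-dimension bound. Your proposed Campanato inequality
$\sup_{B_r(x_0)}|u-P_{x_0,r}|\leq Cr^2\bigl(\fint_{B_{2r}}|D^2u|^\varepsilon\bigr)^{1/\varepsilon}$
is additionally problematic on its own terms: for $\varepsilon<1$ (the regime of Lin's estimate) there is no Sobolev embedding into $L^\infty$, and even formally the right-hand side tends to $Cr^2|D^2u(x_0)|$ at Lebesgue points, which is $O(r^2)$ rather than $o(r^2)$, so $\Sigma$ as you define it would contain every point where $|D^2u(x_0)|\gtrsim\delta_0$ and the concentration/covering step would have nothing to bite on. To repair the argument you need to restore the differentiation step and work with $W^{3,\varepsilon}$, exactly as the paper's sketch and the original Armstrong--Silvestre--Smart proof do.
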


    The idea of their proof is natural. Theorem \ref{THM: Savin thm} says that if a solution is sufficiently closed to some polynomial, then it must be regular locally. Now differentiating $F(D^2u)=0$ once and applying Lin's $W^{2,\varepsilon}$ estimates, we get a $W^{3,\varepsilon}$ estimate. This estimate tells us that at most points, $u$ can be locally approximated by quadratic polynomials and further have a measure estimate on the set of such points. Then the partial regularity follows. As an application of Theorem \ref{theorem: main thm}, the same partial regularity result still holds for non-homogeneous uniformly elliptic equations, which is proved by dos Prazeres-Teixeira \cite[Corollary 5.3]{dos-Teixeira}.
    
    More recently, in Shankar-Yuan's seminal work \cite{Shankar-Yuan-Ann-2025}, they proved a partial regularity result that any $2$-convex viscosity solution to $\sigma_{2}(D^2u)=1$ has singular set of Lebesgue measure zero. We now can generalize this result to the $\sigma_{k}\,$-Hessian equations $\sigma_{k}(D^2u)=f$ with Lipschitz positive right-hand side terms. 

    \begin{proposition}\label{PROP: sigma-k}
        For $2\leq k\leq n$, let $u$ be a k-convex viscosity solution to $\sigma_{k}(D^2u)=f$ on $B_{1}\subset\bb{R}^n$. Suppose that $f>0$ is Lipschitz.  Then

        (i) $u$ is twice differentiable almost everywhere, that is for $a.e.\,x_{0}\in B_{1}$, there exists a quadratic polynomial $Q_{x_{0}}$, such that
        \[|u(x)-Q_{x_{0}}(x)|=o(|x-x_{0}|^2).\]

        (ii) Denote the singular set of $u$ by $\Sigma:=\{x\in B_{1}: u\ \text{is not}\  C^{2,\alpha}\ \text{near}\ x\}.$ Then $\Sigma$ has Lebesgue measure zero.
    \end{proposition}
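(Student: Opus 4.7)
The plan is to follow the Armstrong--Silvestre--Smart / Shankar--Yuan strategy: combine a pointwise twice-differentiability statement almost everywhere with the $\varepsilon$-regularity theorem, Theorem \ref{theorem: main thm}.

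For (i), we will use the regularity theory for $k$-convex functions. Since $u$ is $k$-convex, its Hessian (wherever it exists) lies in the closed G\aa rding cone $\overline{\Gamma_k}$, and almost-everywhere twice differentiability follows from known results: Alexandrov's theorem for $k=n$, and for $2\le k<n$ an interior $W^{2,\varepsilon}$ or Hessian-measure estimate of Chou--Wang / Trudinger--Wang type, supplemented by the positivity and Lipschitz continuity of $f$.

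For (ii), we will fix $x_0$ at which $u$ is twice differentiable and set $M_0:=D^2u(x_0)$. A viscosity solution that is twice differentiable at a point solves the equation classically there, so $\sigma_k(M_0)=f(x_0)>0$; combined with $k$-convexity this places $M_0$ in the \emph{interior} of $\Gamma_k$ (by Newton--MacLaurin). The shifted operator
\[
\tilde F(N):=\sigma_k(M_0+N)-\sigma_k(M_0)
\]
is then smooth, satisfies $\tilde F(0)=0$, and is $\rho$-uniformly elliptic near $N=0$ with constants determined by the distance of $M_0$ to $\partial\Gamma_k$. Setting $P_{x_0}$ to be the Taylor polynomial of $u$ at $x_0$ and
\[
v_r(y):=\frac{u(x_0+ry)-P_{x_0}(x_0+ry)}{r^2},
\]
a direct scaling calculation gives that $v_r$ is a viscosity solution on $B_1$ of
\[
\tilde F(D^2 v_r)=\tilde f_r(y),\qquad \tilde f_r(y):=f(x_0+ry)-f(x_0).
\]
Twice differentiability at $x_0$ yields $\|v_r\|_{L^\infty(B_1)}=o(1)$, and the Lipschitz bound on $f$ yields $\|\tilde f_r\|_{C^{0,\alpha}(B_1)}\le C\,r$, so both tend to $0$ as $r\to 0$. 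Hence for $r$ small enough (depending on $x_0$), Theorem \ref{theorem: main thm} applies to $v_r$ and produces $v_r\in C^{2,\alpha}(B_{1/2})$. Undoing the scaling, $u\in C^{2,\alpha}$ in a neighborhood of $x_0$, so $x_0\notin\Sigma$. Combined with (i), $\Sigma$ has Lebesgue measure zero.

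The main obstacle is part (i): producing classical pointwise twice differentiability of $u$ almost everywhere. This lies outside the scope of Theorem \ref{theorem: main thm} and must be imported from the theory of $k$-convex / admissible solutions. A secondary subtlety is that the ellipticity constants of $\tilde F$ degenerate as $M_0$ approaches $\partial\Gamma_k$, so the threshold radius in (ii) cannot be chosen uniformly in $x_0$; this is harmless for a measure-zero conclusion, but it is the reason the argument does not immediately give a Hausdorff-dimension improvement.
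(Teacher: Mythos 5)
Your proof of part (ii) is essentially identical to the paper's: fix a twice-differentiability point $x_0$, shift the operator to $G(N)=\sigma_k(D^2Q_{x_0}+N)-\sigma_k(D^2Q_{x_0})$, rescale, and invoke Theorem \ref{theorem: main thm}. The observation that the rescaled operator is uniformly elliptic near the origin (because $D^2u(x_0)$ lies in the interior of $\Gamma_k$ once $\sigma_k(D^2u(x_0))=f(x_0)>0$), and that the threshold radius depends on $x_0$, matches the paper and is correctly handled.

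The genuine gap is part (i), which you explicitly defer to "known results." That does not work for the full range $2\le k\le n$. Alexandrov covers $k=n$, and Chaudhuri--Trudinger covers $k>n/2$, but for $2\le k\le n/2$ there is \emph{no} off-the-shelf a.e.\ twice-differentiability theorem for $k$-convex functions: the paper itself remarks that this can fail without the equation. A $W^{2,\varepsilon}$ estimate with small $\varepsilon<1$ does not imply pointwise twice differentiability a.e.\ (that requires something like $W^{2,1}_{\mathrm{loc}}$ plus Calder\'on--Zygmund), and the Hessian-measure property by itself only produces an $L^1$-averaged second-order Taylor remainder at Lebesgue points of the Radon measure $[D^2u]$, not the pointwise $o(|x-x_0|^2)$ claim. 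The paper bridges exactly this gap: starting from the Hessian-as-Radon-measure fact (Lemma \ref{LEMMA: Hessian as Radon measures}), it follows Evans--Gariepy's proof of Alexandrov's theorem to get an $L^1$ estimate $\fint_{B_r(x)}|h|=o(r^2)$, and then upgrades this to an $L^\infty$ bound using the weighted interior gradient (Lipschitz) estimate of Lemma \ref{LEMMA: Gradient estimate}, which is derived from Trudinger's gradient estimate for $\sigma_k(D^2u)=f$ and crucially uses $f>0$ Lipschitz. This $L^1\to L^\infty$ upgrade, applied to the second-order remainder $h$, is the nontrivial step your sketch omits, and it cannot be replaced by a citation. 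Your concluding paragraph acknowledges (i) as "the main obstacle," so you were aware of the lacuna; but "must be imported from the theory" is not accurate here, since the required result is precisely what the paper proves from scratch in Section 4 (generalizing Shankar--Yuan's argument for $\sigma_2=1$).
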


    This note is organized as follows. In Section \ref{Section: Weak Harnarck}, we derive the weak Harnack inequality and H\"older estimates for locally uniformly elliptic equations. In Sections \ref{Section: proof of main thm} and \ref{Section: Proof of Proposition}, we prove Theorem \ref{theorem: main thm} and Proposition \ref{PROP: sigma-k}.

    \bigskip
    \noindent
    {\bf Acknowledgments.} The author is supported by by National Key R\&D Program of China 2020YFA0712800. He is grateful to Professor Yu Yuan, Ravi Shankar and Kai Zhang for helpful discussions, and to Professors Qing Han and Yuguang Shi for their supports.

    \section{Weak Harnack Inequality and H\"older Estimates}\label{Section: Weak Harnarck}
    In this section, we establish the weak Harnack inequality for locally uniformly elliptic equations, then use it to derive the partial $C^{0,\alpha}$ estimates. We first introduce the definition of  Pucci's classes.
    \begin{definition}
        We say that $u\in \overline{S}_{\rho}(\lambda,\Lambda,b_{0},f)$, if for any $x_{0}\in B_{1}$ and $\varphi\in C^{2}(B_1)$ with
        \[ \varphi(x_{0})=u(x_{0}),\quad \varphi\leq u \ \mathrm{near}\ x_{0},\quad \text{and}\quad \|D^2\varphi(x_{0})\|, |D\varphi(x_{0})|, |\varphi(x_{0})|\leq \rho, \]
        there holds
        \[ \mathcal{M}^{-}_{\lambda,\Lambda}(D^2\varphi(x_{0}))-b_{0}|D\varphi(x_{0})|\leq f(x_{0}). \]

        Similarly, we say $u\in \underline{S}_{\rho}(\lambda,\Lambda,b_{0},f)$, if for any $x_{0}\in B_{1}$ and $\varphi\in C^{2}(B_1)$ with
        \[ \varphi(x_{0})=u(x_{0}),\quad \varphi\ge u \ \mathrm{near}\ x_{0},\quad \text{and}\quad \|D^2\varphi(x_{0})\|, |D\varphi(x_{0})|, |\varphi(x_{0})|\leq \rho, \]
        there holds
        \[ \mathcal{M}^{+}_{\lambda,\Lambda}(D^2\varphi(x_{0}))+b_{0}|D\varphi(x_{0})|\ge f(x_{0}). \]
        Here $\mathcal{M}^{\pm}=\mathcal{M}_{\lambda,\Lambda}^{\pm}$ are Pucci's extremal operators:
        \begin{align*}
            \mathcal{M}^{-}_{\lambda,\Lambda}(M)&=\lambda\sum_{e_{i}>0} e_{i}(M)+\Lambda\sum_{e_{i}<0} e_{i}(M),\\
            \mathcal{M}^{+}_{\lambda,\Lambda}(M)&=\Lambda\sum_{e_{i}>0} e_{i}(M)+\lambda\sum_{e_{i}<0} e_{i}(M).
        \end{align*}

        We also define
        \[ S_{\rho}^{*}(\lambda,\Lambda,b_{0},f)=\underline{S}_{\rho}(\lambda,\Lambda,b_{0},-|f|)\cap \overline{S}_{\rho}(\lambda,\Lambda,b_{0},|f|). \]

        For simplicity, if there is no ambiguity, we sometimes denote $\underline{S}_{\rho}(\lambda,\Lambda,b_{0},f), \overline{S}_{\rho}(\lambda,\Lambda,b_{0},f)$ and $S^{*}_{\rho}(\lambda,\Lambda,b_{0},f)$ by $\underline{S}_{\rho}(f), \overline{S}_{\rho}(f)$ and $S^{*}_{\rho}(f)$, respectively.
    \end{definition}
    The relation between functions in the Pucci's class and viscosity solutions to \eqref{main eq} is following:
    \begin{lemma}\label{LEMMA: relation between Pucci and solutions}
        Let $F$ be $2\rho\,$-uniformly elliptic  and satisfy the structure condition \eqref{eq: structure condition}.  Let $u$ be a viscosity supersolution (resp. subsolution) to 
        \[ F(D^2u, Du, u, x)=f(x) \qquad \mathrm{on}\ B_{1} \]
        with $\|u\|_{L^{\infty}(B_{1})}\leq \rho$. Then for any $\phi\in C^{2}(B_{1})$ with $\|\phi\|_{C^{1,1}(B_1)}\leq \rho$, we have
        \[  u-\phi\in \overline{S}_{\rho}\left( \lambda,\Lambda, b_{0}, \overline{f} \right) \ \left(\text{resp.}\ \in \underline{S}_{\rho}\left( \lambda,\Lambda, b_{0}, \underline{f} \right)  \right)   \]
        where
        \begin{align*}
            \overline{f}(x)&=f(x)+c_{0}|u(x)-\phi(x)|-F(D^2\phi(x), D\phi(x), \phi(x), x),\\
            \underline{f}(x)&= f(x)-c_{0}|u(x)-\phi(x)|-F(D^2\phi(x), D\phi(x), \phi(x), x).
        \end{align*}
    \end{lemma}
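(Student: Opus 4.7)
The plan is to prove only the supersolution case (the subsolution case is symmetric). Given a viscosity supersolution $u$ and an admissible test function $\varphi \in C^2$ that touches $u-\phi$ from below at some interior point $x_0 \in B_1$ with $\|D^2\varphi(x_0)\|, |D\varphi(x_0)|, |\varphi(x_0)| \leq \rho$, the observation is that $\varphi + \phi$ is a $C^2$ function that touches $u$ from below at $x_0$. Hence the definition of viscosity supersolution yields
\begin{equation*}
    F\bigl(D^2\varphi(x_0)+D^2\phi(x_0),\,D\varphi(x_0)+D\phi(x_0),\,u(x_0),\,x_0\bigr) \leq f(x_0).
\end{equation*}
The task is then to convert the left-hand side into $\mathcal{M}^{-}(D^2\varphi(x_0)) - b_0|D\varphi(x_0)|$ plus an error involving $F(D^2\phi,D\phi,\phi,x_0)$ and $c_0|u-\phi|$ at $x_0$.

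To do this, I would split the left-hand side telescopically as
\begin{align*}
    F(D^2\varphi+D^2\phi,D\varphi+D\phi,u,x_0)
    &= \bigl[F(D^2\varphi+D^2\phi,D\varphi+D\phi,u,x_0) - F(D^2\phi,D\varphi+D\phi,u,x_0)\bigr] \\
    &\quad+\bigl[F(D^2\phi,D\varphi+D\phi,u,x_0) - F(D^2\phi,D\phi,u,x_0)\bigr] \\
    &\quad+\bigl[F(D^2\phi,D\phi,u,x_0) - F(D^2\phi,D\phi,\phi,x_0)\bigr] \\
    &\quad+ F(D^2\phi,D\phi,\phi,x_0),
\end{align*}
and estimate each bracket from below: the first by $\mathcal{M}^{-}_{\lambda,\Lambda}(D^2\varphi(x_0))$ using uniform ellipticity (the standard characterization of $\mathcal{M}^{\pm}$ as the extremal differences of $F$ values); the second by $-b_0|D\varphi(x_0)|$ using the structure condition \eqref{eq: structure condition}; and the third by $-c_0|u(x_0)-\phi(x_0)|$ again by \eqref{eq: structure condition}. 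Combining these with the supersolution inequality and rearranging produces exactly $\mathcal{M}^{-}(D^2\varphi(x_0)) - b_0|D\varphi(x_0)| \leq \overline{f}(x_0)$, which is the claim $u-\phi \in \overline{S}_\rho(\lambda,\Lambda,b_0,\overline{f})$.

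The principal thing to verify, and the only place the hypothesis ``$2\rho$-uniformly elliptic'' is really used, is that the arguments at which we are applying uniform ellipticity and the structure condition all lie in the $2\rho$-neighborhood of $(0,0,0,x_0)$. Indeed, the bounds $\|D^2\phi\|,|D\phi|,|\phi|\leq \rho$, $\|u\|_{L^\infty}\leq \rho$, and $\|D^2\varphi(x_0)\|, |D\varphi(x_0)|,|\varphi(x_0)|\leq \rho$ guarantee that every matrix, vector, and scalar argument appearing in the telescoping (in particular $D^2\phi+D^2\varphi$, $D\phi+D\varphi$, and $u$ at $x_0$) has norm at most $2\rho$, so the ellipticity and structure bounds are applicable along the entire segment connecting consecutive terms. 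This is the main (and only nontrivial) technical check; once it is in place, the inequalities are immediate and the subsolution version follows verbatim by replacing $\mathcal{M}^{-}$ with $\mathcal{M}^{+}$ and reversing the relevant inequalities.
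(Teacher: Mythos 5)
Your proof is correct and follows essentially the same route as the paper's: both pass from the test function $\varphi$ for $u-\phi$ to the test function $\varphi+\phi$ for $u$, then peel off $\mathcal{M}^-$, $b_0$, and $c_0$ terms using $2\rho$-uniform ellipticity and the structure condition; the only difference is that you spell out the telescoping into three explicit brackets while the paper compresses the gradient and zeroth-order steps into a single application of \eqref{eq: structure condition}.
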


    \begin{proof}
        We only prove the Lemma for supersolution $u$. For any $\varphi\in C^{2}(B_{1})$, if $\varphi$ touches $u-\phi$ from below at $x_{0}\in B_{1}$ with $\|D^2\varphi(x_{0})\|, |D\varphi(x_{0})|, |\varphi(x_{0})|\leq \rho$. Then $\varphi+\phi$ touches $u$ from below at $x_{0}$. By the definition of supersolutions, $2\rho\,$-uniform ellipticity of $F$ and \eqref{eq: structure condition}, we have 
        \begin{align*}
            f(x_{0})&\ge F(D^2\varphi(x_{0})+D^2\phi(x_{0}),D\varphi(x_{0})+D\phi(x_{0}),\varphi(x_{0})+\phi(x_{0}), x_{0})\\
            &\ge \mathcal{M}^{-}(D^2\varphi(x_{0}))-b_{0}|D\varphi(x_{0})|-c_{0}|\varphi(x_{0})|+F(D^2\phi(x_{0}), D\phi(x_{0}),\phi(x_{0}),x_{0}).
        \end{align*}
        At the touch point $x_{0}$, we have $\varphi(x_{0})=u(x_{0})-\phi(x_{0})$. Therefore,
        \begin{align*}
            \mathcal{M}^{-}(D^2\varphi(x_{0}))-b_{0}|D\varphi(x_{0})|&\leq f(x_{0})+c_{0}|u(x_{0})-\varphi(x_{0})|- F(D^2\phi(x_{0}),D\phi(x_{0}),\phi(x_{0}),x_{0})\\
            &=\overline{f}(x_{0}).
        \end{align*}
        This verifies $u-\phi\in\overline{S}_{\rho}\left( \lambda,\Lambda, b_{0}, \overline{f} \right). $
    \end{proof}

    \subsection{Method of sliding paraboloids}
    For any $a>0$, $y\in \overline{B}_{1}$, we denote the quadratic polynomial 
    \[P_{a,y}(x)=-\dfrac{a}{2}|x-y|^2+\text{const}\]
    to be a concave paraboloid centered at $y$ of opening $a$.

    Given $V\subset \overline{B}_{1} $, for any $a>0$ and $y\in V$, we can slide the paraboloid $P_{a,y}$ from below until it touches $u$ by below at some point $x\in \overline{B}_{1}$. We note that such touch point must exist, since $P_{a,y}$ touches $u$ from below at $x$ if and only if $x$ is the minimum point of $u(z)+\frac{a}{2}|z-y|^2$ over $\overline{B}_{1}$. We collect all touch points, and denote the set of touch points by $T_{a}(V)$. Equivalently,
    \begin{align*}
        T_{a}(V)&=\left\{x\in\overline{B}_1: \exists\, y\in V, \mathrm{such\ that}\ u(x)+\dfrac{a}{2}|x-y|^2=\inf_{z}\left(u(z)+\dfrac{a}{2}|z-y|^2\right) \right\}.
    \end{align*}

    The first lemma states that if $u$ is a supersolution and the opening $a$ is small, then the measure of the set of touch points can control the measure of the set of centers.
    \begin{lemma}\label{measure lemma}
        Let $u\in \overline{S}_{\rho}(\lambda,\Lambda,b_{0},f)$ in $B_{1}$.
        Define $\Gamma=\frac{(n-1)\Lambda+2b_{0}+1}{\lambda}+1$. Assume that 
        
        (i) $\|u\|_{L^{\infty}(B_{1})}\leq \rho$ and $\|f\|_{L^{\infty}(B_{1})}<a\leq \rho/\Gamma;$
        
        (ii) $V\subset\overline{B}_{1}$ with $T_{a}(V)\subset B_{1}$.

Then we have 
        \[|T_{a}(V)|\ge \dfrac{1}{(1+\Gamma)^n}|V|. \]
    \end{lemma}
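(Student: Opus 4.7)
The plan is to bound $|V|$ in terms of $|T_a(V)|$ via a change-of-variables argument associated to the gradient map
\[
\Psi(x) := x + \frac{1}{a}\,Du(x).
\]
For each center $y \in V$, a corresponding touching point $x(y) \in T_a(V)$ minimizes $z \mapsto u(z) + \tfrac{a}{2}|z-y|^2$, and at a point where $u$ is differentiable the first-order condition gives $y = x(y) + a^{-1}Du(x(y)) = \Psi(x(y))$. Hence $V \subset \Psi(T_a(V))$, and the task reduces to a Lipschitz estimate for $\Psi$ on $T_a(V)$ with constant at most $1 + \Gamma$.

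The heart of the argument is the two-sided Hessian bound $-aI \le D^2u(x) \le (\Gamma - 1)aI$ for a.e.\ $x \in T_a(V)$. The lower bound holds because $P_{a,y}$ touches $u$ from below at $x$ and has Hessian $-aI$. For the upper bound, I will first check that $P_{a,y}$ is an admissible test function for the class $\overline{S}_\rho$: since $a \le \rho/\Gamma$ and $\|u\|_\infty \le \rho$, one has $\|D^2 P_{a,y}\| = a \le \rho$, $|DP_{a,y}(x)| = a|x-y| \le 2a \le \rho$, and $|P_{a,y}(x)| = |u(x)| \le \rho$. After an inf-convolution regularization of $u$ (so that Alexandrov's theorem provides pointwise twice-differentiability almost everywhere), the Pucci inequality $\mathcal{M}^-(D^2u(x)) - b_0|Du(x)| \le f(x)$ can be applied pointwise a.e.\ on $T_a(V)$. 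Using $|Du(x)| \le 2a$, $\|f\|_\infty < a$, and the fact that the at-most-$(n-1)$ negative eigenvalues of $D^2u(x)$ each exceed $-a$, one obtains
\[
\lambda\,\lambda_{\max}(D^2u(x)) \;\le\; \mathcal{M}^-(D^2u(x)) + (n-1)\Lambda a \;\le\; a\bigl[(n-1)\Lambda + 2b_0 + 1\bigr] \;=\; \lambda a(\Gamma - 1).
\]

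Consequently, $D\Psi = I + a^{-1}D^2u$ has eigenvalues in $[0, \Gamma]$ a.e.\ on $T_a(V)$, so $\Psi$ is Lipschitz there with constant at most $1 + \Gamma$ (the extra unit absorbs the $o(1)$ error produced by the inf-convolution approximation). The area formula then yields
\[
|V| \;\le\; |\Psi(T_a(V))| \;\le\; (1 + \Gamma)^n \,|T_a(V)|,
\]
which is the desired estimate.

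The main obstacle is the lack of pointwise regularity of $u$: a viscosity supersolution is only continuous, so neither $Du(x)$ nor $D^2u(x)$ exists a priori. I would handle this by working with the inf-convolution $u^\varepsilon(x) := \inf_{z}\bigl[u(z) + (2\varepsilon)^{-1}|x-z|^2\bigr]$, which is semiconvex, hence twice differentiable a.e.\ by Alexandrov's theorem, and which inherits membership in (a slight enlargement of) $\overline{S}_\rho$. The Hessian bounds above are then derived for $u^\varepsilon$, and one passes to the limit $\varepsilon \to 0$ using the convergence of touching points to recover the claim for $u$. The hypothesis $T_a(V) \subset B_1$ is what ensures all touching points lie in the interior, so the Pucci class inequality is genuinely applicable at them and the approximation procedure does not encounter boundary interference.
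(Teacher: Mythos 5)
Your overall strategy --- the gradient map $\Psi(x) = x + a^{-1}Du(x)$, the two-sided Hessian bound on $T_a(V)$, the area formula, and inf-convolution regularization --- is exactly the paper's approach, so the architecture is sound. There is, however, a genuine gap in the step where you derive the upper Hessian bound, and it sits precisely at the point where $\rho$-uniform ellipticity differs from uniform ellipticity.

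You assert that ``the Pucci inequality $\mathcal{M}^-(D^2u(x)) - b_0|Du(x)| \le f(x)$ can be applied pointwise a.e.\ on $T_a(V)$'' after inf-convolution. But the class $\overline{S}_\rho$ only constrains $u$ against test functions $\varphi$ with $\|D^2\varphi(x_0)\| \le \rho$, $|D\varphi(x_0)| \le \rho$, $|\varphi(x_0)| \le \rho$. To use the second-order Taylor polynomial of $u$ at $x_0$ as the test function --- which is what ``pointwise application'' means --- you need $\|D^2u(x_0)\| \le \rho$, and this is \emph{not} known a priori; it is essentially what you are trying to prove. The admissibility check you perform is only for the paraboloid $P_{a,y}$, whose Hessian is $-aI$; plugging that into the Pucci inequality yields the trivial estimate $-n\Lambda a - 2b_0 a \le f(x_0)$ and gives no upper bound on $D^2u$. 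The paper avoids this circularity by arguing by contradiction: if the upper bound fails at $x_0$, one can write $D^2u(x_0) \ge \Gamma a\, e\otimes e - aI$ for some unit vector $e$ and then test with the explicit quadratic $\varphi_\varepsilon$ whose Hessian is $\Gamma a\, e\otimes e - (a+\varepsilon)I$. By construction $\|D^2\varphi_\varepsilon(x_0)\| \le \Gamma a \le \rho$, so $\varphi_\varepsilon$ is \emph{always} admissible regardless of how large $D^2u(x_0)$ might be, and the Pucci inequality then forces $f(x_0) \ge a > \|f\|_{L^\infty}$, a contradiction. Your computation of $\lambda\,\lambda_{\max}(D^2u(x)) \le \lambda a(\Gamma-1)$ is arithmetically correct, but it is not licensed until you have exhibited such an admissible test function; you need either the paper's contradiction device or an explicit truncation of $D^2u(x_0)$ at level $\Gamma a$ before invoking the class inequality. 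This is a small repair, but it is exactly the technical point that makes the locally-uniformly-elliptic setting nontrivial, and it should be made explicit.
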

    \begin{proof}
        \emph{Step 1.} We first assume that $u$ is semi-concave. For general case, one can use Jensen's $\varepsilon$-envelope approximation to reduce the problem to the semi-concave case.

        By semi-concavity, the graph of $u$ admits at all points a touching paraboloid of opening $b$ from above. For any $x\in T_{a}(V)$, we can also touch the graph at $(x,u(x))$ by a paraboloid with vertex $y\in V$ and opening $a$ from below. Therefore, $u$ is differentiable at $x$. Moreover, $Du$ is Lipschitz on $T_{a}(V)$ with $[Du]_{Lip}\leq C(a,b)$.
        
        By the definition of $T_{a}(V)$, we know that $x$ is the minimum point of $u(z)+\frac{a}{2}|z-y|^2$. Since $T_{a}(V)\subset B_{1}$, we have $Du(x)+a(x-y)=0$. Thus the vertex $y$ is uniquely determined by $y=x+\frac{1}{a}Du(x).$
        
        Now, we define a Lipschitz map 
        \begin{align*}
            \mathcal{M}:T_{a}(V)&\longrightarrow V\\
            x\ &\longmapsto y:=x+\dfrac{1}{a}D u(x).
        \end{align*}
        By the definition of $T_{a}(V)$, $\mathcal{M}$ is surjective. Hence from the area forluma, we get
        \begin{align}\label{area formu}
            |V|\leq \int_{T_{a}(V)}|\det D\mathcal{M}(x)|\,\mathrm{d}x.
        \end{align}

    By the Alexandrov theorem, there exists $\mathcal{Z}\subset B_{1}$ with $|B_{1}\setminus \mathcal{Z}|=0$, such that $u$ is punctually twice differentiable on $\mathcal{Z}$. That is for any $z\in\mathcal{Z}$, 
    \[ u(x)=u(z)+Du(z)\cdot(x-z)+\dfrac{1}{2}(x-z)^{T}D^2u(z)(x-z)+o(|x-z|^2). \]
    Therefore, $D\mathcal{M}= I+\frac{1}{a}D^2u$ on $T_{a}(V)\cap\mathcal{Z}$.
    So far, we haven't used the equation. Next, we will use the equation to estimate $D^2u$ on $T_{a}(V)\cap \mathcal{Z}$.

    \begin{claim}
        $-aI\leq D^2u\leq \Gamma aI$ in $T_{a}(V) \cap \mathcal{Z}$.
    \end{claim}
    
    \noindent\emph{Proof of Claim.} The left inequality is obvious, since $u$ can be touched by a paraboloid of opening $a$ from below at any points in $T_{a}(V)$. 
    
    Suppose the right inequality fails, then there exist $x_{0}\in T_{a}(V)$ and a direction $e\in\bb{S}^{n-1}$, such that
        \[D^2u(x_{0})\ge \Gamma a e\otimes e-aI. \]
   For any $0<\varepsilon<a$ small, the function
   \[  \varphi_{\varepsilon}(x):= u(x_{0})- Du(x_{0})\cdot (x-x_{0})+ \dfrac{1}{2}(x-x_{0})^{T}(\Gamma ae\otimes e-aI)(x-x_{0})-\dfrac{\varepsilon}{2}|x-x_{0}|^2  \]
   will touch $u$ from below at $x_{0}$. Note that the eigenvalues of $D^2\varphi_{\varepsilon}(x_{0})= \Gamma a e\otimes e-(a+\varepsilon)I$ are $-a-\varepsilon,\cdots,-a-\varepsilon,(\Gamma-1)a-\varepsilon$. Since $\Gamma a\leq \rho$, we have $\|D^2\varphi_{\varepsilon}(x_{0})\|\leq \rho$. We can also check that $|D\varphi_{\varepsilon}(x_{0})|=|Du(x_{0})|\leq 2a\leq \rho$ and $ |\varphi_{\varepsilon}(x_{0})|=|u(x_{0})|\leq \rho$. From the definition of $\overline{S}_{\rho}(f)$, we obtain
        \begin{align*}
            f(x_{0})\ge \mathcal{M}^{-}(D^2\varphi_{\varepsilon}(x_{0}))-b_{0}|D\varphi_{\varepsilon}(x_{0})|\ge \lambda(\Gamma a-a-\varepsilon)-(n-a)\Lambda(a+\varepsilon)-2b_{0}a.
        \end{align*}
    Sending $\varepsilon\to 0$, we deduce that
    \[ f(x_{0})\ge [\lambda(\Gamma-1)-(n-1)\Lambda-2b_{0}] a\ge a> \|f\|_{L^{\infty}}. \]
    This leads a contradiction,  hence the claim holds.
    
    \vspace{0.3cm}
    
     Now from \eqref{area formu}, we conclude that
        \[|V|\leq \int_{T_{a}(V)\cap\mathcal{Z}}\left|\det \left(I+\dfrac{1}{a}D^2u(x)\right)\right|\,\mathrm{d}x\leq (1+\Gamma)^n|T_{a}(V)|.\]

    \vspace{0.3cm}

    \emph{Step 2. General case.} For general $u$, consider its Jensen's $\varepsilon$-envelope. For any $\varepsilon>0$, define
    \[ u_{\varepsilon}(x)=\inf_{y\in B_1} \left\{ u(y)+\dfrac{1}{\varepsilon}|y-x|^2 \right\}. \]
    By \cite[Chapter 5]{CCbook}, $u_{\varepsilon}$ is semi-concave and $u_{\varepsilon}$ converges to $u$ uniformly on compact sets of $B_{1}$. Moreover, we also have $u_{\varepsilon}\in \overline{S}_{\rho}(\lambda,\Lambda,b_{0},\|f\|_{L^{\infty}}).$ To see this, suppose $\varphi\in C^{2}$ touches $u_{\varepsilon}$ from below at $x_{0}$ with $\|D^2\varphi(x_{0})\|, |D\varphi(x_{0})|\leq \rho$. Let $\widetilde{x}_{0}\in B_{1}$ be the point such that
    \[ \inf_{y\in B_{1}}\left\{ u(y)+\dfrac{1}{\varepsilon}|y-x_{0}|^2 \right\}=u(\widetilde{x}_{0})+\dfrac{1}{\varepsilon}|\widetilde{x}_{0}-x_{0}|^2. \]
    Then $\widetilde{\varphi}(x)=\varphi(x+x_{0}-\widetilde{x}_{0})+u(\widetilde{x}_{0})-u(x_{0})$ touches $u$ from below at $\widetilde{x}_{0}$. It follows that
    \[ \mathcal{M}^{-}(D^2\varphi(x_{0}))-b_{0}|D\varphi(x_{0})|=\mathcal{M}^{-}(D^2\widetilde{\varphi}(\widetilde{x}_{0}))-b_{0}|D\widetilde{\varphi}(\widetilde{x}_{0})|\leq f(\widetilde{x}_{0})\leq  \|f\|_{L^{\infty}(B_{1})}. \]
    By Step 1, we conclude that
    \[ |T_{a,\varepsilon}(V)|\ge \dfrac{1}{(1+\Gamma)^n}|V|, \]
    where $T_{a,\varepsilon}$ are corresponding sets of touching points for $u_{\varepsilon}$. From the uniform convergence of $\{u_{\varepsilon}\}$, we can check that
    \[ \limsup_{k\to\infty}T_{a,1/k}(V)=\bigcap_{m=1}^{\infty}\bigcup_{k=m}^{\infty}T_{a,1/k}(V)\subset T_{a}(V). \]
    In conclusion, 
    \[ |T_{a}(V)|\ge \dfrac{1}{(1+\Gamma)^n}|V|. \]
    The proof is complete.
    \end{proof}

    For simplicity, we denote $T_{a}(\overline{B}_{1})$ by $T_{a}$.
    \begin{corollary}\label{density corollary}
        Let $u\in \overline{S}_{\rho}(\lambda,\Lambda,b_{0},f)$ and be non-negative in $B_{1}$. 
        Suppose $\rho\ge \rho_{0}:= 8\Gamma$. If
        \[ \|f\|_{L^{\infty}(B_{1})}\leq 8,\quad \|u\|_{L^{\infty}(B_{1})}\leq \rho,\quad \text{and}\quad  \inf_{B_{1/4}}u\leq 1. \]
        Then
        \begin{align*}
            \dfrac{|T_{8}\cap B_{1}|}{|B_{1}|}>\mu \quad and \quad \dfrac{|\{u\leq 2\}\cap B_{1}|}{|B_{1}|}>\mu,
        \end{align*}
        where $\mu=\mu(n,\lambda,\Lambda, b_{0})\in(0,1)$ is a universal constant.
    \end{corollary}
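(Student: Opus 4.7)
The plan is to apply the measure estimate of Lemma \ref{measure lemma} with $V = B_{1/4}$ and opening $a=8$. The key observation is that every paraboloid $P_{8,y}(x) = -4|x-y|^2 + c$ with vertex $y\in B_{1/4}$, when slid up to touch $u$ from below, must touch at an interior point lying well inside $B_1$ and at which $u$ is automatically $\leq 2$. This simultaneously yields both density estimates with the same constant $\mu$.

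Pick $x_{0}\in \overline{B}_{1/4}$ with $u(x_{0})\leq 1$, which exists by hypothesis. For any $y\in B_{1/4}$, let $x^{*}\in T_{8}(\{y\})\subset \overline{B}_{1}$ be the corresponding touch point. By definition of $T_{8}$,
\[
u(x^{*})+4|x^{*}-y|^{2}\leq u(x_{0})+4|x_{0}-y|^{2}\leq 1+4\cdot(1/2)^{2}=2.
\]
Since $u\ge 0$, this gives the two bounds $u(x^{*})\leq 2$ and $|x^{*}-y|\leq 1/\sqrt{2}$. Consequently $|x^{*}|\leq 1/4+1/\sqrt{2}<1$, so $T_{8}(B_{1/4})\subset B_{1}\cap\{u\leq 2\}$.

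Now I verify the hypotheses of Lemma \ref{measure lemma} for $V=B_{1/4}$ and $a=8$: the $L^{\infty}$-bound on $u$ is given; $a=8\leq \rho/\Gamma$ holds because $\rho\ge \rho_{0}=8\Gamma$; the condition $\|f\|_{L^{\infty}}<a$ holds after an arbitrarily small enlargement of $a$ (the conclusion then passes to $a=8$ by monotonicity of the touch-set since slightly larger opening gives a subset); and $T_{a}(V)\subset B_{1}$ was just established. Therefore
\[
|T_{8}(B_{1/4})|\ge \dfrac{1}{(1+\Gamma)^{n}}|B_{1/4}|=\dfrac{|B_{1}|}{\bigl(4(1+\Gamma)\bigr)^{n}}.
\]
Choosing $\mu:=\bigl(4(1+\Gamma)\bigr)^{-n}$ (which depends only on $n,\lambda,\Lambda,b_{0}$ through $\Gamma$) gives $|T_{8}\cap B_{1}|/|B_{1}|\ge |T_{8}(B_{1/4})|/|B_{1}|>\mu$, and since $T_{8}(B_{1/4})\subset\{u\leq 2\}\cap B_{1}$ the same lower bound handles the second density estimate.

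I do not expect a real obstacle here: the only subtle point is the strict inequality $\|f\|_{L^{\infty}}<a$ required in Lemma \ref{measure lemma}, which is handled by applying the lemma with $a'>8$ arbitrarily close to $8$ (using $\rho\ge 8\Gamma$ with a little room, or noting that the strict inequality can be relaxed by a limit argument via shrinking the set or enlarging the opening). The geometric content—that vertices drawn from a ball of radius $1/4$ produce touch points trapped in $B_{1}$ with $u\leq 2$—is forced by the single pointwise bound $\inf_{B_{1/4}}u\leq 1$.
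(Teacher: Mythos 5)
Your main argument is correct and essentially the same as the paper's: apply Lemma \ref{measure lemma} with $V=B_{1/4}$ and $a=8$, after showing that vertices in $B_{1/4}$ force every touch point into $B_1\cap\{u\le 2\}$. Your way of trapping the touch point — deducing $|x^*-y|\le 1/\sqrt{2}$ directly from $u(x^*)+4|x^*-y|^2\le 2$ and $u\ge 0$ — is a minor variant of the paper's comparison on $\partial B_1$ (where $u+4|\cdot-y|^2\ge 9/4 > 2$); both are sound.

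One caution about your side remark on the strict inequality $\|f\|_{L^\infty}<a$: the monotonicity you invoke is backwards. If $P_{a,y}$ touches $u$ from below at $x$, then for any $a'>a$ the paraboloid $P_{a',y'}$ with $y'=x+\frac{a}{a'}(y-x)$ also touches $u$ from below at $x$, so $T_a\subset T_{a'}$; enlarging the opening \emph{enlarges} the touch set. Thus applying Lemma \ref{measure lemma} with $a'>8$ bounds $|T_{a'}\cap B_1|$ from below but says nothing about $|T_8\cap B_1|$. The $\le$ versus $<$ discrepancy (which the paper itself silently glosses over, applying the lemma at $a=8$ with $\|f\|_{L^\infty}\le 8$) is better repaired at the source: in the Claim of Lemma \ref{measure lemma}, proving the slightly weaker bound $D^2u\le(\Gamma+1)aI$ yields $f(x_0)\ge(1+\lambda)a>a\ge\|f\|_{L^\infty}$ even under a non-strict hypothesis, at the harmless cost of replacing $(1+\Gamma)^n$ by $(2+\Gamma)^n$ in the measure estimate.
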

    \begin{proof}
        We claim that $T_{8}(B_{1/4})\subset B_{1}$, then by Lemma \ref{measure lemma}, we have
        \[|T_{8}\cap B_{1}|\ge |T_{8}(B_{1/4})|\ge\dfrac{1}{(1+\Gamma)^n}|B_{1/4}|.\]
        Note that $T_{8}(B_{1/4})\subset\{u\leq 2\}$ is obvious.
        
        It remains to prove the claim. For any $x\in T_{8}(B_{1/4})$, there exists $y\in B_{1/4}$, such that $x$ is the minimum point of $u(z)+4|z-x|^2$. We only need to rule out the possibility that it takes the minimum at $\partial B_{1}$.

        For $z\in \partial B_{1}$, since $u\ge0$, we have $u(z)+4|z-y|^2\ge 4\cdot (3/4)^2=9/4$. 
        
        On the other hand, since $\inf_{B_{1/4}}u\leq 1$, there exists $x_{1}\in B_{1/4}$, such that $u(x_{1})\leq 1$, then $u(x_{1})+4|x_{1}-y|^2\leq 2$. The claim easily holds.
    \end{proof}

    In the above argument, it is important to assume the touch point belongs to the interior of the ball, otherwise we cannot get the information of $\nabla u$ and $D^2 u$ at the touch point. 

    The next lemma is from a natural observation. If there is a touch point in the interior of the ball, then we enlarge the opening of the paraboloid and perturb the center slightly, the corresponding touch points also belong to the interior of the ball. Our proof is modified from \cite{Savin-07,Li-Li}.
    \begin{lemma}\label{lemma of enlarge opening}
        Let $u\in \overline{S}_{\rho}(\lambda,\Lambda,b_{0},f)$ in $B_{1}$. 
        There exists a universal $M=M(n,\lambda,\Lambda,b_{0})>1$, such that if 
        
        (i) $\|u\|_{L^{\infty}(B_1)}\leq \rho$ and $\|f\|_{L^{\infty}}< a\leq \frac{\rho}{M\Gamma}$;
        
        (ii) $B_{r}(x_{0})\subset B_{1}$ with $T_{a}\cap B_{r}(x_{0})\neq\varnothing$, \\
        then
        \[ \dfrac{|T_{Ma}\cap B_{r}(x_{0})|}{|B_{r}(x_{0})|}\ge \mu, \]
        where $\mu=\mu(n,\lambda,\Lambda,b_{0})\in(0,1)$ is a universal constant.
    \end{lemma}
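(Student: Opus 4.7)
The strategy is to apply Lemma~\ref{measure lemma} to $u$ at the enlarged opening $Ma$, with a vertex set $V \subset \overline{B}_1$ of measure $\gtrsim r^n$ chosen so that every touch point it produces must lie inside $B_r(x_0)$. The hypotheses of Lemma~\ref{measure lemma} for $u$ with opening $Ma$ follow at once from ours: $\|u\|_{L^\infty(B_1)} \leq \rho$ is given, and $\|f\|_{L^\infty(B_1)} < a < Ma \leq \rho/\Gamma$ follows from $a \leq \rho/(M\Gamma)$.

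Let $\bar{y} \in \overline{B}_1$ be a vertex realizing the given touch point $\bar{x} \in T_a \cap B_r(x_0)$, and set $\tilde u(x) := u(x) + \tfrac{a}{2}|x - \bar{y}|^2$, which attains its minimum over $\overline{B}_1$ at $\bar{x}$. Completing the square yields the identity
$$u(z) + \tfrac{Ma}{2}|z - y|^2 = \tilde u(z) + \tfrac{(M-1)a}{2}|z - y^\ast|^2 + C(y), \qquad y^\ast := \tfrac{My - \bar{y}}{M-1},$$
where $C(y)$ does not depend on $z$. Hence the touch point $\hat{z}$ of $u$ at opening $Ma$ with vertex $y$ coincides with the minimizer of the right-hand side over $\overline{B}_1$; combined with $\tilde u(\hat{z}) \geq \tilde u(\bar{x})$ this gives
$$|\hat{z} - y^\ast| \leq |\bar{x} - y^\ast|,$$
so $\hat{z} \in B_r(x_0)$ whenever $|y^\ast - x_0| + |y^\ast - \bar{x}| \leq r$. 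If a vertex set $V \subset \overline{B}_1$ (in the $y$ coordinate) of measure $\gtrsim r^n$ all of whose associated $y^\ast$'s satisfy this condition can be exhibited, then Lemma~\ref{measure lemma} yields
$$|T_{Ma} \cap B_r(x_0)| \geq |T_{Ma}(V)| \geq \frac{|V|}{(1+\Gamma)^n} \geq \mu\,|B_r(x_0)|,$$
as desired, and one takes $M$ large enough so that the affine map $y\mapsto y^\ast$ is nondegenerate with universal Jacobian.

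The main obstacle is the construction of $V$ with measure $\gtrsim r^n$ uniformly in the location of $\bar{x}$. The set of admissible $y^\ast$'s is the ellipse
$$E := \{y^\ast : |y^\ast - x_0| + |y^\ast - \bar{x}| \leq r\},$$
whose semi-minor axis $\tfrac{1}{2}\sqrt{r^2 - |\bar{x} - x_0|^2}$ degenerates as $\bar{x}$ approaches $\partial B_r(x_0)$, so that the affine preimage $V$ can be forced small in this regime. To recover a universal lower bound, one first reduces to the case where $\bar{x}$ is centrally located, say $\bar{x} \in B_{r/2}(x_0)$, by carrying out the argument in a sub-ball of $B_r(x_0)$ of radius $\sim r$ containing $\bar{x}$ well inside it; the density inside this sub-ball transfers to $B_r(x_0)$ up to a universal constant factor. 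In the central regime $E$ contains a ball of radius $\sim r$, so the corresponding $V$ satisfies $|V| \geq c(n)\,r^n$, finishing the argument. This geometric reduction, together with the bookkeeping of the Pucci-class parameters under translation, is the delicate part of the proof.
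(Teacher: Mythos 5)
Your computation is set up correctly: completing the square to pass from opening $a$ to opening $Ma$ is exactly the right algebra, and the observation that the touch point $\hat z$ satisfies $|\hat z - y^*| \le |\bar x - y^*|$ (because $\bar x$ minimizes $\tilde u$) is sound. You also correctly identify the obstruction: the ellipse $E = \{y^*: |y^* - x_0| + |y^* - \bar x| \le r\}$ degenerates as $\bar x \to \partial B_r(x_0)$. But your proposed fix does not close the gap, and the actual fix requires one more use of the PDE that you have not invoked.

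The sub-ball reduction cannot give a universal bound. If $\mathrm{dist}(\bar x, \partial B_r(x_0)) = \varepsilon r$ and $B_\varrho(z_0) \subset B_r(x_0)$ contains $\bar x$ at distance $\ge \varrho/2$ from $\partial B_\varrho(z_0)$, then a short computation forces $\varrho \le 2\varepsilon r$, so the sub-ball radius degenerates exactly as badly as the ellipse did. You cannot simultaneously have radius $\sim r$, containment in $B_r(x_0)$, and $\bar x$ well inside. And if the sub-ball is small, the transferred density $|T_{Ma}\cap B_r(x_0)|/|B_r(x_0)|$ picks up a factor $(\varrho/r)^n$ that is not universal. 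The paper's remedy is a barrier argument (its Step 1): it builds a comparison function of the form $P_{a,y_1}(x) + ar^2\phi(|x-x_0|/r)$ with $\phi$ a suitable radial profile, and uses the supersolution property of $u$ (membership in $\overline S_\rho$) to show that $u - P_{a,y_1} \le C_0 a r^2$ at some point $x_2 \in \overline{B_{r/2}}(x_0)$. The ellipse is then, in effect, centered at the well-interior point $x_2$ rather than at $\bar x$; the price is that $\tilde u(x_2)$ is not the exact minimum, only within $C_0 a r^2$ of it, which feeds a remainder $R = O((1 + M)\,a r^2)$ into the inequality $|\hat z - y^*|^2 \le |x_2 - y^*|^2 + \tfrac{2R}{(M-1)a}$; taking $M$ large makes this affordable. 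Your argument, as written, uses the PDE only through Lemma~\ref{measure lemma} and never through a barrier, which is precisely the missing ingredient: without it there is no way to move the anchor from the possibly peripheral touch point $\bar x$ to a central point while retaining quantitative control.
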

    \begin{proof}
        Assume $x_{1}\in T_{a}\cap B_{r}(x_{0})$, then there exists a $y_{1}\in\overline{B_{1}}$, such that the paraboloid
        \[P_{a,y_{1}}(x)=-\dfrac{a}{2}|x-y_{1}|^2+u(x_{1})+\dfrac{a}{2}|x_{1}-y_{1}|^2\]
        touches $u$ from below at $x_{1}$. Next, we will enlarge the opening $a$ to $Ma$ and perturb the center $y_{1}$ slightly, we will show that the corresponding touch points also belong to $B_{r}(x_{0})$. We divide the proof into 3 steps.\\

        \emph{Step 1.} We claim that there exist $ x_{2}\in \overline{B_{r/2}}(x_{0})$ and $C_{0}=C_{0}(n,\lambda,\Lambda,b_{0}
        )>0$, such that
        \[ u(x_{2})-P_{a,y_{1}}(x_{2})\leq C_{0}ar^2. \]
        To see this, we consider the barrier function 
        \[\psi(x)=P_{a,y_{1}}(a)+ar^2\phi\left(\dfrac{|x-x_{0}|}{r}\right),\]
        where 
        \[\phi(t)=\begin{cases}
            \frac{1}{p}(t^{-p}-1) \quad &\mathrm{for}\ \frac{1}{2}\leq t\leq 1,\\
            \frac{1}{p}(2^{p}-1) \quad &\mathrm{for}\ t<\frac{1}{2}.
        \end{cases}\]
        Set $x_{2}$ be the minimum point of $u-\psi$ over $\overline{B_{r}}(x_{0})$.

       From the direct computation, for $x\in B_{r}(x_{0})\setminus \overline{B_{r/2}}(x_{0}) $, the eigenvalues of $D^2\psi(x)$ are 
        \[ a\left(-t^{-p-2}-1\right),\cdots, a\left(-t^{-p-2}-1\right), a((p+1)t^{-p-2}-1), \]
        where $t=\frac{|x-x_{0}|}{r}\in\left(\frac{1}{2},1\right)$.
        Moreover, $|D\psi(x)|\leq (2+rt^{-p-1})a$. Then we have
        \begin{align*}
            \mathcal{M}_{\lambda,\Lambda}^{-}(D^2\psi(x))-b_{0}|D\psi(x)|&=\lambda a((p+1)t^{-p-2}-1)-(n-1)\Lambda a\left(t^{-p-2}+1\right)\\
            &\quad -b_{0}(2+rt^{-p-1})a\\
            &\ge at^{-p-2}\left(\lambda p-2(n-1)\Lambda-4b_{0}\right)\\
            &\ge a>\|f\|_{L^{\infty}},
        \end{align*}
        provided $p\ge \frac{2(n-1)\Lambda+4b_{0}+1}{\lambda} $.
        We also require $M=M(p)$ sufficiently large, so that $\|D^2\psi(x)\|, |D\psi(x)| \leq Ma\leq \rho$. 
        Hence, by the definition of $\overline{S}_{\rho}(f)$, $u-\psi$ cannot attain its minimum in $B_{r}(x_{0})\setminus \overline{B_{r/2}}(x_{0})$, that is $x_{2}\notin B_{r}(x_{0})\setminus\overline{B_{r/2}}(x_{0})$.

        For $z\in\partial B_{r}(x_{0})$,  we have
        \begin{align*}
            u(z)-\psi(z)=u(z)-P_{a,y_{1}}(z)\ge0.
        \end{align*}
        However, 
        \begin{align*}
            u(x_{1})-\psi(x_{1})=u(x_{1})-P_{a,y_{1}}(x_{1})=-ar^2\phi\left(\dfrac{|x_{1}-x_{0}|}{r}\right)<0.
        \end{align*}
        Therefore $x_{2}\notin\partial B_{r}(x_{0})$.

        Finally, we have $x_{2}\in\overline{B_{r/2}}(x_{0})$ with $(u-\psi)(x_{2})<0$. Since $\phi\leq C_{0}$ for universal $C_{0}$, then
        \[u(x_{2})<\psi(x_{2})\leq P_{a,y_{1}}(x_{2})+C_{0}ar^2.\]
        \ \\
        \emph{Step 2.} We show that $T_{Ma}(V)\subset B_{r}(x_{0})$ with
        \[V=\overline{B_{r\frac{M-1}{8M}}}\left(\dfrac{1}{M}y_{1}+\dfrac{M-1}{M}x_{2}\right).\]

        For any $\widetilde{x}\in T_{Ma}(V)$, there exists $\widetilde{y}\in V$, such that the paraboloid
        \[ P_{Ma,\widetilde{y}}(x)=-\dfrac{Ma}{2}|x-\widetilde{y}|^2+u(\widetilde{x})+\dfrac{Ma}{2}|\widetilde{x}-\widetilde{y}|^2 \]
        touches $u$ from below at $\widetilde{x}$.

        Now we have two inequalities
        \begin{align*}
            \begin{cases}
                P_{a,y_{1}}(x)\leq u(x) \quad \mathrm{in}\ B_{1},\ \mathrm{with\ equality\ at}\ x_{1};  \\
                P_{Ma,\widetilde{y}}(x)\leq u(x) \quad \mathrm{in}\ B_{1},\ \mathrm{with\ equality\ at}\ \widetilde{x}.
            \end{cases}
        \end{align*}
        We first examine the difference of this two paraboloid, note that
        \begin{align*}
            P_{Ma,\widetilde{y}}(x)-P_{a,y_{1}}(x)&=-\dfrac{Ma}{2}|x-\widetilde{y}|^2+\dfrac{a}{2}|x-y_{1}|^2+\mathrm{Const}\\
            &=-\dfrac{(M-1)a}{2}|x-y^*|^2+R,
        \end{align*}
        where $y^*=\frac{M\widetilde{y}-y_{1}}{M-1}$ and $R$ denote the remainder constant term. Since $\widetilde{y}\in V$, by the definition of $V$, we have $y^*\in \overline{B_{r/8}}(x_{2})$.

        To estimate the remainder term $R$, we note that
        \[P_{a,y_{1}}(x)-\frac{(M-1)a}{2}|x-y^*|^2+R=P_{Ma,\widetilde{y}}(x)\leq u(x) \quad \mathrm{in}\ B_{1},\]
        then, at the point $x_{2}$, by Step 1, we get
        \begin{align*}
            R&\leq u(x_{2})-P_{a,y_{1}}(x_{2})+\dfrac{(M-1)a}{2}|x_{2}-y^*|^2\\
            &\leq C_{0}ar^2+\dfrac{M-1}{128}ar^2=\left(C_{0}+\dfrac{M-1}{128}\right)ar^2.
        \end{align*}

        Our goal is to show that $\widetilde{x}\in B_{r}(x_{0})$,  thus we need to estimate $|\widetilde{x}-x_{0}|$. To do this, we first estimate $|\widetilde{x}-y^*|$. Since
        \begin{align*}
            0&\leq u(\widetilde{x})-P_{a,y_{1}}(\widetilde{x})=P_{Ma,\widetilde{y}}(\widetilde{x})-P_{a,y_{1}}(\widetilde{x})\\
            &=-\dfrac{(M-1)a}{2}|\widetilde{x}-y^*|^2+R,
        \end{align*}
        then
        \[|\widetilde{x}-y^*|^2\leq \dfrac{2}{(M-1)a}R\leq \left(\dfrac{2C_{0}}{M-1}+\dfrac{1}{64}\right)r^2.\]
        Hence, $|\widetilde{x}-y^*|\leq \frac{r}{4}$ provided $M$ is universally large. Finally,
        \[|\widetilde{x}-x_{0}|\leq |\widetilde{x}-y^*|+|y^*-x_{2}|+|x_{2}-x_{0}|\leq \dfrac{r}{4}+\dfrac{r}{8}+\dfrac{r}{2}<r.\]
        \ \\
        \emph{Step 3.} Conclusion. By Lemma \ref{measure lemma}, we have
        \begin{align*}
            |T_{Ma}\cap B_{r}(x_{0}) |\ge |T_{Ma}(V)|\ge c|V|\ge c\left(r\dfrac{M-1}{8M}\right)^n=\widetilde{c}r^n,
        \end{align*}
        which implies
        \[ \dfrac{|T_{Ma}\cap B_{r}(x_{0})|}{|B_{r}(x_{0})|}>\mu \]
        for some universal $\mu\in(0,1)$.
    \end{proof}

    We also need the following covering lemma. For its proof, we refer to \cite[Lemma 2.1]{Imbert-Silvestre} and \cite[Lemma 2]{Li-Li}.
    \begin{lemma}\label{covering lemma}
        Let $E\subset F\subset B_{1}$, with $E\neq\varnothing$, and let $\mu\in(0,1)$. If for any ball $B\subset B_{1}$ with $B\cap E\neq \varnothing$,  we have $|B\cap F|>\mu|B|$, then
        \[|B_{1}\setminus F|\leq \left(1-\dfrac{\mu}{5^n}\right)|B_{1}\setminus E|.\]
    \end{lemma}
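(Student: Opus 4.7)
The conclusion is equivalent to the density-increase bound
\[
|F\setminus E|\;\geq\;\frac{\mu}{5^n}|B_1\setminus E|,
\]
since $E\subset F$ gives $|B_1\setminus F|=|B_1\setminus E|-|F\setminus E|$. I would establish this by a Vitali-type covering of the gap $B_1\setminus E$ by disjoint balls lying in $B_1\setminus E$ whose slight enlargements touch $E$ and thereby trigger the density hypothesis.

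\textbf{Cover.} For each $x\in B_1\setminus\overline{E}$, set $r(x):=\min\{\operatorname{dist}(x,E),\,1-|x|\}>0$ and consider $B_x:=B_{r(x)}(x)\subset B_1\setminus E$. Applying Vitali's $5r$-covering lemma to $\{B_x\}$ extracts a countable disjoint subfamily $\{B_i=B_{r_i}(x_i)\}$ with $\bigcup_i 5B_i\supseteq B_1\setminus\overline{E}$, so
\[
\sum_i |B_i|\;\geq\;\frac{1}{5^n}|B_1\setminus E|.
\]

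\textbf{Activation and summation.} Fix a small $\varepsilon>0$. For each $B_i$ whose closure touches $E$, the concentric enlargement $\widetilde{B}_i:=B_{(1+\varepsilon)r_i}(x_i)\subset B_1$ meets $E$, so by hypothesis $|\widetilde{B}_i\cap F|>\mu|\widetilde{B}_i|$. Combined with $|\widetilde{B}_i\setminus B_i|=((1+\varepsilon)^n-1)|B_i|$ and $B_i\subset B_1\setminus E$,
\[
|B_i\cap(F\setminus E)|=|B_i\cap F|\;\geq\;|\widetilde{B}_i\cap F|-|\widetilde{B}_i\setminus B_i|\;>\;\bigl(1-(1-\mu)(1+\varepsilon)^n\bigr)|B_i|.
\]
Summing over the disjoint $\{B_i\}$ and sending $\varepsilon\to 0$ yields the target density estimate $|F\setminus E|\geq(\mu/5^n)|B_1\setminus E|$.

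\textbf{Main obstacle.} The one genuinely nontrivial case is when $r_i=1-|x_i|$, i.e.\ $\overline{B_i}$ touches $\partial B_1$ rather than $E$: then the concentric enlargement leaves $B_1$ and the hypothesis does not apply directly. The standard remedy (as in \cite{Imbert-Silvestre, Li-Li}) is an off-center enlargement---shift the center of $B_i$ slightly inward along the radial direction before enlarging, producing $\widetilde{B}_i\subset B_1$ that still meets $E$. This geometric bookkeeping is the one place where the argument demands more than routine calculation; everything else is a direct application of Vitali combined with the density hypothesis.
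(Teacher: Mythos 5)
The paper does not give its own proof of this lemma --- it refers the reader to \cite{Imbert-Silvestre} and \cite{Li-Li} --- so I can only assess your proposal on its own terms.

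Your overall plan (Vitali covering of the complement of $E$, maximal balls, density hypothesis on slightly enlarged balls, sum over the disjoint family) is the right framework, and the computation in the ``activation'' step for a ball whose closure touches $E$ is correct. But the obstacle you flag in the last paragraph is not a technical footnote that a ``standard remedy'' disposes of; as you have set things up it is a genuine gap, and the remedy you suggest does not work.

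Here is the problem concretely. With $r(x)=\min\{\operatorname{dist}(x,E),\,1-|x|\}$, the disjoint Vitali family $\{B_i=B_{r_i}(x_i)\}$ splits into \emph{good} balls (those with $r_i=\operatorname{dist}(x_i,E)<1-|x_i|$, where a slight concentric enlargement stays in $B_1$ and meets $E$) and \emph{bad} balls (those with $r_i=1-|x_i|<\operatorname{dist}(x_i,E)$, which touch $\partial B_1$ but are a \emph{positive, fixed} distance from $E$). Your summation needs $\sum_{\text{good}}|B_i|\geq 5^{-n}|B_1\setminus E|$, but Vitali only gives $\sum_{\text{all}}|B_i|\geq 5^{-n}|B_1\setminus\overline{E}|$, and the bad balls can dominate this sum. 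For instance if $E=B_\varepsilon(0)$ with $\varepsilon$ tiny, then every $x$ with $|x|>(1+\varepsilon)/2$ produces a bad ball, and the bad region occupies roughly a $(1-2^{-n})$ fraction of $B_1$; the good balls alone cannot account for $5^{-n}|B_1\setminus E|$. Moreover, the proposed fix --- shifting the center of a bad $B_i$ slightly inward along the radial direction and enlarging --- does not produce a ball meeting $E$: since $\operatorname{dist}(x_i,E)-r_i>0$ is a fixed positive number, no perturbation of size $O(\varepsilon)$ in center or radius can bridge the gap to $E$. Shifting \emph{toward the nearest point of $E$} (not radially) can be made to work when that nearest point lies strictly inside $B_1$, but this requires a careful geometric argument and also threatens the disjointness of the family; it is emphatically not a bookkeeping step. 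The cited sources handle this by a genuinely different mechanism (either an off-center construction designed around this case from the start, or a dyadic/Calder\'on--Zygmund decomposition), so the proposal as written is incomplete precisely at the point it identifies as nontrivial.

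A minor additional issue: you cover $B_1\setminus\overline{E}$ rather than $B_1\setminus E$, so the inequality $\sum_i|B_i|\geq 5^{-n}|B_1\setminus E|$ silently uses $|\overline{E}\setminus E|=0$, which is not part of the hypotheses. This is easy to repair (replace $E$ by $\overline{E}\cap B_1$, using that open balls meeting $\overline{E}$ also meet $E$), but it should be said.
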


    By enlarging the opening of touching paraboloids and iteration, we have the following measure estimates for the set of touching points.
    \begin{corollary}\label{decay lemma}
        Let $u\in \overline{S}_{\rho}(\lambda,\Lambda,b_{0},f)$ and be non-negative in $B_{1}$.
        Suppose $\rho\ge \rho_{0}:= 8\Gamma$. If
        \[ \|f\|_{L^{\infty}(B_{1})}\leq 8,\quad \|u\|_{L^{\infty}(B_1)}\leq \rho,\quad\text{and}\quad \inf_{B_{1/4}}u\leq 1,   \]
         then
        \begin{align*}
            |B_{1}\setminus T_{8M^k}|\leq C_{n}(1-\theta)^k \quad \text{provided}\ 1\leq k\leq \dfrac{1}{\ln M}\ln\dfrac{\rho}{\rho_{0}}. 
        \end{align*}
        where $M$ is the constant in Lemma \ref{lemma of enlarge opening} and $\theta=\theta(n,\lambda,\Lambda,b_{0})\in(0,1)$.
    \end{corollary}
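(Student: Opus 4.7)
The strategy is a straightforward induction on $k$: the base density estimate Corollary \ref{density corollary} starts the induction, and Lemma \ref{lemma of enlarge opening} combined with the covering lemma \ref{covering lemma} drives each subsequent step. Setting $\theta:=\mu/5^{n}$, the base case $k=1$ is immediate from Corollary \ref{density corollary}, which gives $|B_{1}\setminus T_{8}|\le (1-\mu)|B_{1}|\le C_{n}(1-\theta)$.

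For the inductive step from $k-1$ to $k$, I would apply the covering lemma with $E:=T_{8M^{k-1}}\cap B_{1}$ and $F:=T_{8M^{k}}\cap B_{1}$. The density hypothesis $|B\cap F|>\mu|B|$ for any ball $B\subset B_{1}$ meeting $E$ is precisely the output of Lemma \ref{lemma of enlarge opening} applied with $a=8M^{k-1}$; this application is legitimate exactly when $a\le \rho/(M\Gamma)$, i.e. $8M^{k}\le \rho/\Gamma$, which via $\rho_{0}=8\Gamma$ translates to the range restriction $k\le \ln(\rho/\rho_{0})/\ln M$. The covering lemma then delivers $|B_{1}\setminus T_{8M^{k}}|\le(1-\theta)|B_{1}\setminus T_{8M^{k-1}}|$, and chaining these estimates against the base case completes the induction.

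The one nontrivial bookkeeping point is verifying the inclusion $E\subset F$ required by the covering lemma, namely $T_{a}\subset T_{Ma}$. If $P_{a,y}$ touches $u$ from below at $x\in \overline{B}_{1}$ with $y\in \overline{B}_{1}$, I would set $y':=x+(y-x)/M$, which lies in $\overline{B}_{1}$ as a convex combination of $x$ and $y$. Normalizing $P_{Ma,y'}$ so that $P_{Ma,y'}(x)=u(x)$, a short computation yields $P_{Ma,y'}(z)-P_{a,y}(z)=-\tfrac{(M-1)a}{2}|z-x|^{2}\le 0$, so $P_{Ma,y'}\le P_{a,y}\le u$ globally and $x\in T_{Ma}$.

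The main ``obstacle'' is really just the bookkeeping described above: verifying the inclusion $T_{a}\subset T_{Ma}$ and ensuring the hypotheses of Lemma \ref{lemma of enlarge opening} (in particular $a\le \rho/(M\Gamma)$) are satisfied at every iteration stage, which is precisely what pins down the upper range on $k$. Beyond that, the proof is a clean iteration and no deeper technical difficulties should arise.
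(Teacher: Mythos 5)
Your proof is correct and follows essentially the same route as the paper: induction with Corollary \ref{density corollary} as the base case and Lemma \ref{lemma of enlarge opening} together with Lemma \ref{covering lemma} driving the inductive step, with the range of $k$ pinned down by the requirement $8M^k\le\rho/\Gamma$ needed to invoke Lemma \ref{lemma of enlarge opening}. Your explicit verification that $T_a\subset T_{Ma}$ via the recentred paraboloid $P_{Ma,y'}$ with $y'=x+(y-x)/M$ is a useful addition, since the paper asserts the inclusion $E\subset F$ required by the covering lemma without comment.
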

    \begin{proof}
        We prove this corollary by induction. For $k=1$, the result follows from Corollary \ref{density corollary}. Assume this result holds for $k-1$. Denote $E=T_{8M^{k-1}}\cap B_{1}$ and $F=T_{8M^{k}}\cap B_{1}$, then $E\subset F\subset B_{1}$. For any ball $B=B_{r}(x_{0})\subset B_{1}$ with $B\cap E\neq \varnothing$, that is $T_{8M^{k-1}}\cap B_{r}(x_{0})\neq \varnothing$. By Lemma \ref{lemma of enlarge opening}, we have
        \[\dfrac{|T_{8M^k}\cap B|}{|B|}\ge \mu \quad \mathrm{provided}\ 8M^k\leq \dfrac{\rho}{\Gamma},\ \mathrm{or\ equivalently}\ k\leq \dfrac{1}{\ln M}\ln\dfrac{\rho}{\rho_{0}}.\]
        Hence, by Lemma \ref{covering lemma}, we have $|B_{1}\setminus F|\leq (1-5^{-n}\mu)|B_{1}\setminus E|$, that is
        \[|B_{1}\setminus T_{8M^k}|\leq \left(1-\dfrac{\mu}{5^n}\right)|B_{1}\setminus T_{8M^{k-1}}|\leq (1-\theta)^{k}, \]
        provided $k\leq \dfrac{1}{\ln M}\ln\dfrac{\rho}{\rho_{0}}$ and if we take $\theta=5^{-n}\mu$.
    \end{proof}
    
    A direct consequence of Lemma \ref{decay lemma} is the following weak $L^{\varepsilon}$ estimate.

    \begin{corollary}[Weak $L^{\varepsilon}$ estimate]\label{Lepsilon est} 
    Under the hypothesis of Corollary \ref{decay lemma}, we have
    \[ |\{u>t\}\cap B_{1}|<Ct^{-\varepsilon} \quad \text{for}\ 0\leq t\leq 17\dfrac{\rho}{\rho_{0}}, \]
    where $\varepsilon=\varepsilon(n,\lambda,\Lambda,b_{0})>0$ is a universal constant.
    \end{corollary}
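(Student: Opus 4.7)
The plan is to convert the geometric-in-$k$ decay of $|B_1\setminus T_{8M^k}|$ furnished by Corollary \ref{decay lemma} into the desired polynomial-in-$t$ decay of $|\{u>t\}\cap B_1|$, by exploiting an elementary pointwise upper bound for $u$ on the touch set.

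The first step is to show that every $x\in T_a\cap B_1$ satisfies $u(x)\le 1+\tfrac{25a}{32}$. By the hypothesis $\inf_{B_{1/4}}u\le 1$ there is $x_1\in B_{1/4}$ with $u(x_1)\le 1$; if $x\in T_a$ is a touch point with center $y\in\overline{B}_1$, the defining minimality yields
\[
u(x)\le u(x)+\tfrac{a}{2}|x-y|^2\le u(x_1)+\tfrac{a}{2}|x_1-y|^2\le 1+\tfrac{25a}{32},
\]
since $|x_1-y|\le 5/4$. In particular, choosing $a=8M^k$ one obtains the key inclusion
\[
\bigl\{u>1+\tfrac{25}{4}M^k\bigr\}\cap B_1\subset B_1\setminus T_{8M^k}.
\]

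For $t$ in the stated range, I would then choose $k=k(t)$ to be the largest positive integer with $1+\tfrac{25}{4}M^k\le t$, so that $k=\lfloor\log_M\tfrac{4(t-1)}{25}\rfloor$. The numerical constant $17$ in the statement is precisely sized so that for $t\le 17\rho/\rho_0$ this $k$ still lies in the admissible window $1\le k\le (\ln M)^{-1}\ln(\rho/\rho_0)$ of Corollary \ref{decay lemma}. Applying that corollary gives
\[
|\{u>t\}\cap B_1|\le |B_1\setminus T_{8M^k}|\le C_n(1-\theta)^k,
\]
and setting $\varepsilon:=-\ln(1-\theta)/\ln M>0$ together with $M^k\ge\tfrac{4(t-1)}{25M}$ translates the geometric decay in $k$ into the polynomial decay $C\,t^{-\varepsilon}$ claimed in the corollary. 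For $t$ below a fixed universal threshold (where there is no admissible $k\ge 1$) the trivial bound $|\{u>t\}\cap B_1|\le|B_1|$ is already of the required form after enlarging $C$.

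I do not anticipate any substantive obstacle; the argument is a clean translation of the geometric decay of Corollary \ref{decay lemma} through the correspondence $t\leftrightarrow 8M^k$ provided by the pointwise estimate on the touch set. The only care required is bookkeeping the numerical constants so that the choice $k(t)$ stays inside the admissible range for every $t\le 17\rho/\rho_0$, which is where the explicit upper endpoint of the interval and the hypothesis $\rho\ge\rho_0$ are used.
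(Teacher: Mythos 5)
Your strategy is exactly the paper's: show that the touch set $T_{8M^k}$ lies inside a sublevel set of $u$ by using $\inf_{B_{1/4}}u\le 1$ and the defining minimality at the touch point, then convert the geometric decay of $|B_1\setminus T_{8M^k}|$ from Corollary \ref{decay lemma} into polynomial decay in $t$. The only substantive difference is that you optimize the pointwise bound to $u\le 1+\tfrac{25a}{32}$ on $T_a$ (using $|x_1-y|\le 5/4$), whereas the paper uses the cruder $u\le 1+2a$ (from $|x_1-y|\le 2$), which yields $T_{8M^k}\subset\{u\le 17M^k\}$ and is why the constant $17$ appears in the statement.

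This sharpening, however, invalidates your parenthetical claim that ``the numerical constant $17$ in the statement is precisely sized so that \dots this $k$ still lies in the admissible window.'' With $k=\lfloor\log_M\tfrac{4(t-1)}{25}\rfloor$, at $t=17\rho/\rho_0$ one has $\tfrac{4(t-1)}{25}\approx\tfrac{68}{25}\cdot\tfrac{\rho}{\rho_0}>\tfrac{\rho}{\rho_0}$, so $k$ can exceed $(\ln M)^{-1}\ln(\rho/\rho_0)$ and Corollary \ref{decay lemma} is not directly applicable. This is not a fatal gap: simply replace your $k(t)$ by $\min\{k(t),\,\lfloor(\ln M)^{-1}\ln(\rho/\rho_0)\rfloor\}$; in the regime where the cap bites, $M^k\ge\tfrac{\rho}{\rho_0 M}\ge\tfrac{t}{17M}$, so $(1-\theta)^k\le (17M)^{\varepsilon}t^{-\varepsilon}$ and the extra factor is absorbed into $C$. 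The paper sidesteps this by choosing $k$ via $17M^k\le t<17M^{k+1}$, which automatically gives $M^k\le t/17\le\rho/\rho_0$; the constant $17$ is calibrated to the cruder bound $1+16M^k\le 17M^k$, not to your sharper one.
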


    \begin{proof}
        Since $\inf_{B_{1/4}}u\leq 1$, we have $T_{8M^k}\subset \{u\leq 1+16M^{k}\}\subset \{u\leq 17M^k\} $. For $17<t\leq 17\frac{\rho}{\rho_{0}}$, there exists $k\in \bb{N}$, such that $17M^k\leq t<17M^{k+1}$. Note that  $k\leq \frac{1}{\ln M}\ln\frac{\rho}{\rho_{0}}$. By Lemma \ref{decay lemma}, we get
        \begin{align*}
            |\{u>t\}\cap B_{1}|\leq |\{u>1+16M^{k}\}\cap B_{1}| \leq |B_{1}\setminus T_{8M^{k}}|\leq (1-\theta)^{k}\leq Ct^{-\varepsilon}.
        \end{align*}
        For $0<t\leq 17 $, the result clearly holds.
    \end{proof}

    The weak $L^{\varepsilon}$ estimate implies the following weak Harnack inequality.
    \begin{theorem}[Weak Harnack]\label{THM: weak Harnack}
        Let $u\in \overline{S}_{\rho}(\lambda,\Lambda,b_{0},f)$ and be non-negative in $B_{1}$.
        Suppose $\rho\ge\rho_{0}$. If
        \[ \|u\|_{L^{\infty}(B_1)}\leq \rho \quad  \text{and}\quad   \inf_{B_{1/4}}u+\dfrac{\|f\|_{L^{\infty}(B_{1})}}{8}\leq \frac{\rho}{\rho_{0}}. \]
        Then there exists $\varepsilon_{0}=\varepsilon_{0}(n,\lambda,\Lambda)>0$, such that
        \[ \|u_{\rho}\|_{L^{\varepsilon_{0}}(B_{1/4})}\leq C\left(\inf_{B_{1/4}}u+\|f\|_{L^{\infty}(B_{1})}\right), \]
        where $C=C(n,\lambda,\Lambda)>0$ and 
        \[u_{\rho}(x)=\begin{cases}
            u(x),\quad & \text{if} \ u(x)\leq 17\rho/\rho_{0},\\
            0          & \text{if} \  u(x)> 17\rho/\rho_{0}.
        \end{cases}\]
    \end{theorem}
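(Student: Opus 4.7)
The plan is to derive the weak Harnack inequality by normalizing $u$ and reducing to Corollary \ref{Lepsilon est}. Set
\[ \kappa := \inf_{B_{1/4}} u + \tfrac{1}{8}\|f\|_{L^\infty(B_1)}, \]
so by hypothesis $\kappa \leq \rho/\rho_0$. I will first assume $\kappa > 0$ (the degenerate case $\kappa = 0$ is handled by replacing $\kappa$ with $\kappa + \epsilon$ and letting $\epsilon \to 0^+$ at the end). Define the rescaled function $v := u/\kappa$; the goal is to apply Corollary \ref{Lepsilon est} to $v$ and transfer the resulting distributional bound back to $u$.

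The key observation is the linear scaling of Pucci's class: if $u \in \overline{S}_\rho(\lambda,\Lambda,b_0,f)$, then $v = u/\kappa \in \overline{S}_{\rho/\kappa}(\lambda,\Lambda,b_0,f/\kappa)$. Indeed, whenever $\psi \in C^2$ touches $v$ from below at $x_0$ with $\|D^2\psi(x_0)\|, |D\psi(x_0)|, |\psi(x_0)| \leq \rho/\kappa$, the scaled function $\kappa\psi$ touches $u$ from below at $x_0$ with all three quantities bounded by $\rho$. Applying the Pucci inclusion for $u$ to $\kappa\psi$ and dividing by $\kappa$ yields the desired inequality for $\psi$, with the same $b_0$ since the gradient term scales linearly. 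With $\tilde\rho := \rho/\kappa$, the function $v$ satisfies all four hypotheses of Corollary \ref{Lepsilon est}: $\tilde\rho \geq \rho_0$ because $\kappa \leq \rho/\rho_0$; $\|f/\kappa\|_{L^\infty} \leq 8$ because $\kappa \geq \|f\|_{L^\infty}/8$; $\|v\|_{L^\infty} \leq \tilde\rho$ because $\|u\|_{L^\infty} \leq \rho$; and $\inf_{B_{1/4}} v \leq 1$ because $\kappa \geq \inf_{B_{1/4}} u$.

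Corollary \ref{Lepsilon est} then gives $|\{v > t\} \cap B_1| \leq C t^{-\varepsilon}$ for $0 \leq t \leq 17\tilde\rho/\rho_0$. Substituting $s = \kappa t$ translates this into
\[ |\{u > s\} \cap B_1| \leq C \kappa^{\varepsilon} s^{-\varepsilon} \qquad \text{for } 0 \leq s \leq 17\rho/\rho_0, \]
which is precisely the regime where the truncation $u_\rho$ agrees with $u$. Choosing any $\varepsilon_0 \in (0,\varepsilon)$, I apply the layer-cake formula on $B_{1/4}$:
\[ \int_{B_{1/4}} u_\rho^{\varepsilon_0}\,dx = \varepsilon_0 \int_0^{17\rho/\rho_0} t^{\varepsilon_0 - 1}\,\bigl|\{u_\rho > t\} \cap B_{1/4}\bigr|\,dt. \]
Splitting the integral at $t = \kappa$, using $|\{u_\rho > t\}| \leq |B_{1/4}|$ on $[0,\kappa]$ and the distributional decay above, the two pieces contribute $C\kappa^{\varepsilon_0}$ and $C\kappa^{\varepsilon}\cdot\kappa^{\varepsilon_0-\varepsilon}/(\varepsilon-\varepsilon_0)$, respectively. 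Taking $\varepsilon_0$-th roots yields $\|u_\rho\|_{L^{\varepsilon_0}(B_{1/4})} \leq C\kappa \leq C(\inf_{B_{1/4}}u + \|f\|_{L^\infty(B_1)})$, as required.

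The only non-mechanical step is verifying that Pucci's class scales linearly under $u \mapsto u/\kappa$ in the locally uniformly elliptic setting, i.e., that the threshold $\rho$ transforms to $\rho/\kappa$ while $b_0$ is preserved. Once that is in hand, the constraint $\kappa \leq \rho/\rho_0$ from the hypothesis is exactly what guarantees $\tilde\rho \geq \rho_0$ for Corollary \ref{Lepsilon est}, and it is also this constraint that explains why the truncated function $u_\rho$, rather than $u$ itself, appears in the conclusion.
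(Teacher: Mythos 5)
Your proposal is correct and follows essentially the same route as the paper: normalize by $\kappa = \inf_{B_{1/4}}u + \|f\|_{L^\infty}/8$, observe the linear scaling of the Pucci class to put $v = u/\kappa$ in $\overline{S}_{\rho/\kappa}(\lambda,\Lambda,b_0,f/\kappa)$, invoke Corollary~\ref{Lepsilon est}, and integrate the resulting tail bound with a layer-cake computation split at $t=\kappa$ (the paper splits at $t=1$ for $v$, which is the same point after rescaling, and fixes $\varepsilon_0 = \varepsilon/2$ rather than an arbitrary $\varepsilon_0 \in (0,\varepsilon)$). The explicit handling of the degenerate case $\kappa=0$ is a minor technicality the paper leaves implicit.
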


    \begin{proof}
        Set $B:=\inf_{B_{1/4}}u+\frac{\|f\|_{L^{\infty}(B_{1})}}{8}\leq \frac{\rho}{\rho_{0}}$, and $v=u/B$, then $\inf_{B_{1/4}}v\leq 1$. Moreover, $v$ is non-negative and belongs to $\overline{S}_{\rho/B}\left(\lambda,\Lambda,b_{0},\frac{f}{B}\right)$. Since $\rho/B\ge \rho_{0}$, applying Corollary \ref{Lepsilon est} for $v$, we have
        \[|\{v>t\}\cap B_{1}|<Ct^{-\varepsilon},\quad \text{for}\ 0\leq t\leq 17\dfrac{\rho}{\rho_{0}B}.  \]
        Set $v_{\rho}=u_{\rho}/B$ and $\varepsilon_{0}=\varepsilon/2$, then we have
        \begin{align*}
            \int_{B_{1/4}}|v_{\rho}|^{\varepsilon_{0}}&=\varepsilon_{0}\int_{0}^{\infty}t^{\varepsilon_{0}-1}|\{v_{\rho}>t\}\cap B_{1/4}|\,\mathrm{d}t\\
            &\leq \varepsilon_{0}\left(\int_{0}^{1}|B_{1/4}|\,\mathrm{d}t+\int_{1}^{17\frac{\rho}{\rho_{0}B}}t^{\varepsilon_{0}-1}|\{v>t\}\cap B_{1}|\,\mathrm{d}t \right)\\
            &\leq \varepsilon_{0}|B_{1/4}|+C\int_{1}^{17\frac{\rho}{\rho_{0}B}}t^{\varepsilon_{0}-1-\varepsilon}\,\mathrm{d}t\\
            &\leq C
        \end{align*}

        Hence, $\|v_{\rho}\|_{L^{\varepsilon_{0}}(B_{1/4})}\leq C$, which means 
        \[\|u_{\rho}\|_{L^{\varepsilon_{0}}(B_{1/4})}\leq CB=C\left(\inf_{B_{1/4}}u+\|f\|_{L^{\infty}(B_{1})}\right).\]
    \end{proof}

    \subsection{H\"older estimates}\begin{theorem}[H\"older estimates]\label{Thm: Holder regularity}
        Let $u\in S^{*}_{\rho}(\lambda,\Lambda,b_{0},f)$ and be non-negative in $B_{1}$.
        Suppose $\rho>2\rho_{0}$. If
        \[ \|u\|_{L^{\infty}(B_{1})}\leq 1\qquad \text{and}\qquad \|f\|_{L^{\infty}(B_{1})}\leq\sigma, \]
        where $0<\sigma<1$ is a universal constant. Then
        \[ \mathrm{osc}_{B_{r}}u\leq Cr^{\alpha}, \quad \text{for}\ \sqrt{\dfrac{2\rho_{0}}{\rho}}\leq r\leq 1. \]
        where $C>0,\alpha\in(0,1)$ are universal constant depending only on $n,\lambda,\Lambda, b_{0}$.
    \end{theorem}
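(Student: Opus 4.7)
The plan is to obtain the Hölder decay by iterating the weak Harnack inequality (Theorem~\ref{THM: weak Harnack}) over a geometric sequence of scales $r_k = (1/4)^k$, running the iteration as long as $r_k \geq \sqrt{2\rho_0/\rho}$. The essential issue, absent in the uniformly elliptic setting, is to keep the weak-Harnack hypothesis $\tilde\rho\ge\rho_0$ valid at every scale, since zooming in shrinks the effective ellipticity radius.

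Let $m_k = \inf_{B_{r_k}} u$, $M_k = \sup_{B_{r_k}} u$, and $\omega_k = M_k - m_k$ (the case $\omega_k = 0$ being trivial), and rescale via $\tilde u(y) = (u(r_k y) - m_k)/\omega_k \in [0,1]$ on $B_1$. A direct computation on the test-function condition defining the Pucci classes shows that $\tilde u \in S^*_{\tilde\rho}(\lambda, \Lambda, b_0, \tilde f)$ with
\[ \tilde\rho \;\asymp\; \frac{r_k^2\rho}{\omega_k}, \qquad \|\tilde f\|_{L^\infty(B_1)} \;\leq\; \frac{r_k^2}{\omega_k}\,\|f\|_{L^\infty(B_1)}, \]
the formula for $\tilde\rho$ arising from the dominant constraint $\|D^2\psi\|\leq\rho$ on test functions for $u$. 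Hence the hypothesis $\tilde\rho\geq\rho_0$ of Theorem~\ref{THM: weak Harnack} becomes the \emph{budget inequality} $\omega_k \leq c\,r_k^2\rho/\rho_0$ (for a universal $c$), which holds uniformly in the range $r_k\geq\sqrt{2\rho_0/\rho}$ simply because $\omega_k\leq 2$.

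The oscillation-decay step is then the following dichotomy. Set $v = \tilde u$ and $w = 1-\tilde u$; both are non-negative, and both lie in $\overline{S}_{\tilde\rho}(\lambda, \Lambda, b_0, |\tilde f|)$ (for $w$ this uses $u\in\underline{S}_\rho(-|f|)$ from the $S^*$-assumption, by the same reasoning as in the proof of Lemma~\ref{LEMMA: relation between Pucci and solutions}). Since $v + w = 1$ on $B_1$ and $\varepsilon_0\leq 1$, we have the pointwise bound $v^{\varepsilon_0} + w^{\varepsilon_0}\geq v + w = 1$, so
\[ \|v\|^{\varepsilon_0}_{L^{\varepsilon_0}(B_{1/4})} + \|w\|^{\varepsilon_0}_{L^{\varepsilon_0}(B_{1/4})}\;\geq\;|B_{1/4}|, \]
and at least one of the two $L^{\varepsilon_0}$-norms is bounded below by a universal constant. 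Applying the weak Harnack (Theorem~\ref{THM: weak Harnack}) to that one yields $\inf_{B_{1/4}}(\cdot)\geq c_0 - C\|\tilde f\|_{L^\infty(B_1)}\geq c_0/2$, provided the source is sufficiently small; since $\|\tilde f\|_\infty \leq (r_k^2/\omega_k)\sigma$, this is justified as long as $\omega_k \gtrsim \sigma r_k^2$, and otherwise the iteration is stopped with $\omega_k$ already comparable to $\sigma r_k^2$. Translating back gives $\omega_{k+1}\leq (1-\delta_0)\omega_k$ with $r_{k+1} = r_k/4$, and iteration produces $\omega_k\leq 2(1-\delta_0)^k = 2r_k^\alpha$ with $\alpha = \log(1-\delta_0)/\log(1/4)\in(0,1)$ universal. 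The main obstacle is the joint bookkeeping of the two constraints --- the budget inequality keeping $\tilde\rho\geq\rho_0$ and the $\sigma r_k^2$-smallness needed to absorb the source --- and a short comparison of geometric rates (in the spirit of the opening-enlargement calculation of Lemma~\ref{lemma of enlarge opening}) confirms that both hold uniformly for $r\in[\sqrt{2\rho_0/\rho},1]$ once $\sigma$ is chosen universally small.
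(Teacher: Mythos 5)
Your proof follows essentially the same outline as the paper's: rescale to $\tilde u(y)=(u(r_k y)-m_k)/\omega_k$ at dyadic scales $r_k=4^{-k}$, track the effective ellipticity radius $\tilde\rho\asymp r_k^2\rho/\omega_k$ so that the weak Harnack hypothesis $\tilde\rho\ge\rho_0$ is met as long as $r_k\ge\sqrt{2\rho_0/\rho}$, apply Theorem~\ref{THM: weak Harnack} to one of $\tilde u$ and $1-\tilde u$, and iterate to get geometric oscillation decay. The one place where you genuinely depart from the paper is the dichotomy step: the paper argues from $|\{v\ge 1/2\}\cap B_1|\ge |B_1|/2$ (or the complementary case) and then asserts $c_0\leq\|v_\rho\|_{L^{\varepsilon_0}(B_{1/4})}$, but the measure lower bound is on $B_1$ while the weak Harnack's $L^{\varepsilon_0}$ conclusion is on $B_{1/4}$, so the asserted lower bound does not follow as written. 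Your pointwise inequality $v^{\varepsilon_0}+(1-v)^{\varepsilon_0}\ge v+(1-v)=1$ on $[0,1]$ (valid since $\varepsilon_0\le 1$), integrated directly over $B_{1/4}$, produces the required lower bound $\|v\|_{L^{\varepsilon_0}(B_{1/4})}^{\varepsilon_0}+\|1-v\|_{L^{\varepsilon_0}(B_{1/4})}^{\varepsilon_0}\ge|B_{1/4}|$ on exactly the ball that the weak Harnack controls, which cleanly closes the gap. Two small notes: you should track that the weak Harnack also needs $\inf_{B_{1/4}}\tilde u+\|\tilde f\|/8\le\tilde\rho/\rho_0$, which tightens the admissible $r_k$-range by a harmless constant factor (the paper is slightly sloppy on this point as well); and the final parenthetical appeal to ``the opening-enlargement calculation of Lemma~\ref{lemma of enlarge opening}'' is a misplaced reference---that lemma concerns sliding paraboloids inside the weak Harnack proof and plays no role in the dyadic bookkeeping here, which is just an elementary comparison of $4^{-2k}$, $(1-\delta_0)^k$, and $\sqrt{2\rho_0/\rho}$.
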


    \begin{proof}
        We only need to prove 
        \begin{align*}
            \mathrm{osc}_{B_{4^{-k}}}u\leq C(1-\kappa)^k, \quad \mathrm{for}\ 0\leq k\leq \dfrac{1}{2\ln 4}\ln\dfrac{\rho}{2\rho_{0}}.
        \end{align*}
        where $C,\kappa$ are universal. We prove it by induction. For $k=0$, it holds obviously. Assume it holds for $k$, set $r_{k}=4^{-k}$, $M_{k}=\sup_{B_{4^{-k}}}u, m_{k}=\inf_{B_{4^{-k}}}u $ and $\omega_{k}=M_{k}-m_{k} $.

        We assume that $\omega_{k}\ge r_{k}^2$, otherwise, we are done. Consider the rescaled function
        \[v(y)=\dfrac{u(r_{k}y)-m_{k}}{M_{k}-m_{k}},\quad y\in B_{1}, \]
        then $v$ is non-negative and belongs $S^{*}_{r_{k}^2\rho/\omega_{k}}(\lambda,\Lambda,b_{0},\widetilde{f})$  with $\widetilde{f}(\cdot)=\frac{r_{k}^2}{\omega_{k}}f(r_{k}\cdot)$. 

        Since $0\leq v\leq 1$ with $\mathrm{osc}_{B_{1}}v=1$, only one of following holds:
        \begin{align*}
            \dfrac{|\{v\ge1/2\}\cap B_{1}|}{|B_{1}|}\ge \dfrac{1}{2} \quad \text{or}\quad \dfrac{|\{v\leq 1/2\}\cap B_{1}|}{|B_{1}|}\ge \dfrac{1}{2}.
        \end{align*}
        Without loss of generality, assume the first case holds. Note that $\|u\|_{L^{\infty}(B_{1})}\leq 1$, then $\omega_{k}\leq 2$. For $k\leq \frac{1}{2\ln 4}\ln\frac{\rho}{2\rho_{0}}$ , we have 
        \[ \dfrac{r_{k}^2}{\omega_{k}}\rho\ge \dfrac{4^{-2k}}{2}\rho\ge\rho_{0}\quad \mathrm{and}\quad  \|\widetilde{f}\|_{L^{\infty}(B_{1})}\leq \dfrac{r_{k}^2}{\omega_{k}}\sigma\leq \sigma.\]
        By taking $\sigma<8$, we get
        \[\inf_{B_{1/4}}v+\dfrac{\|\widetilde{f}\|_{L^{\infty}(B_{1})}}{8}\leq 2\leq \dfrac{\rho}{\rho_{0}}.\]
        Therefore, we can applying Lemma \ref{Lepsilon est} to conclude that
        \begin{align*}
            c_{0}\leq \|v_{r_{k}^2\rho/2}\|_{L^{\varepsilon_{0}}(B_{1/4})}\leq C\left(\inf_{B_{1/4}} v+\|\widetilde{f}\|_{L^{\infty}(B_{1})}\right)\leq C\left(\inf_{B_{1/4}}v+\sigma\right).
        \end{align*}
        Choose $\sigma$ small enough such that $C\sigma\leq \frac{c_{0}}{2}$. Then $\inf_{B_{1/4}}v\ge \kappa$ for some universal $\kappa$, which means $\frac{m_{k+1}-m_{k}}{\omega_{k}}\ge \kappa$. Hence,
        \begin{align*}
            \omega_{k+1}=M_{k+1}-m_{k+1}\leq M_{k}-m_{k+1}\leq (1-\kappa)\omega_{k}\leq (1-\kappa)^{k+1}.
        \end{align*}
        By induction, the proof is complete.
    \end{proof}

    By translation, we have the following corollary which is useful in our future compactness argument.
    \begin{corollary}\label{coro: translated Holder reg}
        Let $u\in S^{*}_{\rho}(\lambda,\Lambda,b_{0},f)$ and be non-negative in $B_{1}$.
        Suppose $\rho>2\rho_{0}$. If
         \[  \|u\|_{L^{\infty}(B_{1})}\leq 1\qquad \text{and}\qquad \|f\|_{L^{\infty}(B_{1})}\leq\sigma, \]
        where $0<\sigma<1$ is a universal constant. Then
        \[ \mathrm{osc}_{B_{r}(x_{0})}u\leq Cr^{\alpha}, \quad \text{for any}\ x_{0}\in B_{1/2}\ \text{and}\ \sqrt{\dfrac{2\rho_{0}}{\rho}}\leq r\leq \dfrac{1}{2}. \]
        where $C>0,\alpha\in(0,1)$ are universal constants depending only on $n,\lambda,\Lambda,b_{0}$. 

        Consequently, 
        \[|u(x_{1})-u(x_{2})|\leq C|x_{1}-x_{2}|^{\alpha},\quad \text{for any}\ x_{1},x_{2}\in B_{1/2}\ \text{and}\ \sqrt{\dfrac{2\rho_{0}}{\rho}}\leq |x_{1}-x_{2}|\leq \dfrac{1}{2}. \]
    \end{corollary}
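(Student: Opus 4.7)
\emph{Proof plan.} The statement is Theorem \ref{Thm: Holder regularity} with balls translated to an arbitrary center $x_0 \in B_{1/2}$, and the plan is to repeat that theorem's induction verbatim, with the only modification being that the rescaling ball is now centered at $x_0$ rather than at the origin. The key observation is that $B_{r_k}(x_0) \subset B_1$ as soon as $r_k \leq 1/2$, i.e.\ for $k \geq 1$; consequently the induction must start at $k = 1$ instead of $k = 0$, and this is exactly what accounts for the range $r \leq 1/2$ in the corollary.

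Fix $x_0 \in B_{1/2}$, and for $k \geq 1$ set $r_k = 4^{-k}$, $M_k = \sup_{B_{r_k}(x_0)} u$, $m_k = \inf_{B_{r_k}(x_0)} u$, $\omega_k = M_k - m_k$. The claim to prove by induction is
\[ \omega_k \leq C(1-\kappa)^k \quad \text{for}\quad 1 \leq k \leq \frac{1}{2\ln 4}\ln\frac{\rho}{2\rho_0}, \]
with $C, \kappa$ universal. The base case $k = 1$ is immediate from $\omega_1 \leq 2$. For the inductive step, assuming $\omega_k \geq r_k^2$ (otherwise we are already done), introduce
\[ v(y) = \frac{u(x_0 + r_k y) - m_k}{\omega_k}, \quad y \in B_1, \]
which is well defined since $|x_0| + r_k < 1$ for $k \geq 1$. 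A chain-rule computation identical to the one in Theorem \ref{Thm: Holder regularity} shows $v \in S^*_{r_k^2 \rho/\omega_k}(\lambda, \Lambda, b_0, \widetilde f)$ with $\widetilde f(y) = r_k^2 f(x_0 + r_k y)/\omega_k$, and $\|\widetilde f\|_{L^\infty(B_1)} \leq \sigma$. In the admissible range of $k$ one has $r_k^2 \rho/\omega_k \geq \rho_0$, so applying the weak Harnack inequality Theorem \ref{THM: weak Harnack} exactly as in Theorem \ref{Thm: Holder regularity} yields $\inf_{B_{1/4}} v \geq \kappa$, whence $\omega_{k+1} \leq (1-\kappa) \omega_k$.

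Given the decay of $\omega_k$, for any $r \in [\sqrt{2\rho_0/\rho}, 1/2]$ pick $k \geq 1$ with $r_{k+1} \leq r \leq r_k$; then $\mathrm{osc}_{B_r(x_0)} u \leq \omega_k \leq C r^\alpha$ with $\alpha = -\log_4(1-\kappa)$. For the pointwise Hölder bound, take any $x_1, x_2 \in B_{1/2}$ with $\sqrt{2\rho_0/\rho} \leq |x_1 - x_2| \leq 1/2$ and set $x_0 = (x_1 + x_2)/2 \in B_{1/2}$, $r = |x_1 - x_2|$; since $x_1, x_2 \in B_r(x_0)$, the first part gives $|u(x_1) - u(x_2)| \leq \mathrm{osc}_{B_r(x_0)} u \leq C r^\alpha$.

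I expect no real obstacle here: the translation invariance of the Pucci class definition and of the weak Harnack inequality makes the adaptation essentially bookkeeping. The only mildly delicate point is the inability to start the induction at $k = 0$, since $B_1(x_0) \not\subset B_1$ when $x_0 \neq 0$; losing that base case is precisely what shortens the oscillation estimate's upper range from $r \leq 1$ in Theorem \ref{Thm: Holder regularity} to $r \leq 1/2$ here.
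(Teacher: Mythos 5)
Your proposal is correct and is exactly the translation argument the paper intends (the paper gives no explicit proof, simply noting the corollary follows ``by translation''). The one small bookkeeping gap is that ``pick $k \geq 1$ with $r_{k+1} \leq r \leq r_k$'' does not cover $1/4 < r \leq 1/2$, but in that range $\mathrm{osc}_{B_r(x_0)} u \leq 2\|u\|_{L^\infty(B_1)} \leq 2 \leq 2\cdot 4^\alpha r^\alpha$ is immediate, so the conclusion holds by enlarging $C$.
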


\section{Proof of Theorem \ref{theorem: main thm}}\label{Section: proof of main thm}
In this section, we prove Theorem \ref{theorem: main thm}.  The key step in the proof is the improvement of flatness Lemma \ref{lemma: improvement of flatness}. Once establishing it, the Caffarelli's iteration argument can go through.

We first introduce some notations. Let $P(x)=a+b\cdot x+ x^{T}Cx$ be a quadratic polynomial, where $a\in\bb{R}, b\in\bb{R}^n$ and $C\in\symmtrix$. 
Define
\[\|P\|_{r}:=|a|+r|b|+r^2\|C\|\ \text{for}\ r>0,\quad \text{and}\quad \|P\|=\|P\|_{1}.\]

\begin{lemma}[Improvement of flatness] \label{lemma: improvement of flatness}
    For $0<\alpha<1$. Let $F$ satisfy $\mathrm{H}1), \mathrm{H}2')$, $\mathrm{H}3')$, and let $u\in C(B_1)$ be a viscosity solution to
    \[F(D^2u, Du, u, x)=f \quad \mathrm{on}\ B_{1}.\]
    There exist universal constants $r_{0},\delta_{0}, \eta\in(0,1)$ and $C>0$, which depend only on $n,\alpha, \lambda,\Lambda,\rho, b_{0},c_{0}$ and $\omega_{F}$, such that if 
    \[ \|F(M,p,z,x)-F(M,p,z,0)\|_{L^{\infty}(B_{r})} \leq \delta_{0}r^{\alpha}\quad \text{for any}\ \|M\|,|p|, |z| \leq \rho, \]
    and
    \[\|u-P_{0}\|_{L^{\infty}(B_{r})}\leq r^{2+\alpha},\qquad \|f-f(0)\|_{L^{\infty}(B_{r})}\leq \delta_{0}r^{\alpha}. \]
    for some $r\leq r_{0}$ and some quadratic polynomial $P_{0}$ with
    \[ \|P_{0}\|\leq Cr_{0}^{\alpha},\qquad and \qquad F(D^2P_{0}, DP(0), P(0), 0)=f(0). \]
    
     Then there exists a quadratic polynomial $P$, such that 
    \begin{align*}
        \|u-P\|_{L^{\infty}(B_{\eta r})}\leq (\eta r)^{2+\alpha},
    \end{align*}
    and 
    \[ \|P-P_{0}\|_{\eta r}\leq C(\eta r)^{2+\alpha}, \qquad F(D^2P, DP(0), P(0), 0)=f(0). \]
    
    \end{lemma}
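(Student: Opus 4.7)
The plan is to argue by compactness and contradiction in the Caffarelli style. Suppose the conclusion fails. Fix $\eta \in (0,1)$, whose size will be determined later from the $C^{2,\alpha}$ theory for constant-coefficient linear equations. Then one can find sequences $\delta_k \downarrow 0$, $r_k \in (0, r_0]$, operators $F_k$, right-hand sides $f_k$, viscosity solutions $u_k$, and quadratics $P_{0,k}$ meeting the hypotheses with $\delta_0$ replaced by $\delta_k$, for which no admissible polynomial $P$ exists. Rescale by setting
\[
v_k(y) := \frac{u_k(r_k y) - P_{0,k}(r_k y)}{r_k^{2+\alpha}}, \qquad y \in B_1,
\]
so that $\|v_k\|_{L^\infty(B_1)} \le 1$. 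The goal is to extract a limit $v_\infty$ solving a linear constant-coefficient equation, approximate it by a quadratic $Q_\infty$ to order $|y|^3$, and then correct $Q_\infty$ to a quadratic satisfying the nonlinear side-condition exactly.

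The first technical step is to place $v_k$ in a Pucci class with small right-hand side. Since $\|P_{0,k}\|_{C^{1,1}(B_1)} \le C r_0^\alpha \le \rho/2$ for $r_0$ small, Lemma \ref{LEMMA: relation between Pucci and solutions} gives $u_k - P_{0,k} \in S^*_{\rho/2}(\lambda, \Lambda, b_0, h_k)$ on $B_{r_k}$ with
\[
h_k(x) = f_k(x) - F_k(D^2 P_{0,k}, D P_{0,k}(x), P_{0,k}(x), x) \pm c_0 |u_k - P_{0,k}|(x).
\]
Using the normalization $F_k(D^2 P_{0,k}, D P_{0,k}(0), P_{0,k}(0), 0) = f_k(0)$, the $x$-Hölder smallness of $F_k$, and the structure condition \eqref{eq: structure condition}, one obtains $\|h_k\|_{L^\infty(B_{r_k})} \le C \delta_k r_k^\alpha$. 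A change of variables then shows $v_k \in S^*_{\rho_k}(\lambda, \Lambda, r_k b_0, g_k)$ on $B_1$ with $\rho_k = \rho/(2 r_k^\alpha)$ and $\|g_k\|_{L^\infty(B_1)} \le C \delta_k$. Choose $r_0$ small enough that $\rho_k \ge 2\rho_0$ and that $\sqrt{2\rho_0/\rho_k}$ is arbitrarily small; Corollary \ref{coro: translated Holder reg} then gives uniform interior Hölder estimates for $\{v_k\}$, hence equicontinuity on compact subsets of $B_1$.

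Passing to a subsequence, $v_k \to v_\infty$ locally uniformly in $B_1$, $r_k \to r_\infty \in [0, r_0]$, and $A_k := D_M F_k(D^2 P_{0,k}, DP_{0,k}(0), P_{0,k}(0), 0) \to A_\infty$ with $\lambda I \le A_\infty \le \Lambda I$. Using the uniform continuity of $D_M F_k$ (from H3$'$), the structure condition, and standard stability of viscosity solutions, $v_\infty$ solves in $B_1$ a linear constant-coefficient equation
\[
\mathrm{tr}\bigl(A_\infty D^2 v_\infty\bigr) + b_\infty \cdot D v_\infty + c_\infty v_\infty = 0,
\]
with $|b_\infty| \le r_\infty b_0$ and $|c_\infty| \le r_\infty^{1+\alpha} c_0$. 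By classical interior regularity, $v_\infty \in C^3_{\mathrm{loc}}(B_1)$ with universal estimates, so its second-order Taylor polynomial $Q_\infty$ at the origin satisfies $|v_\infty - Q_\infty|_{L^\infty(B_\eta)} \le C_* \eta^3$ for a universal $C_*$. Fix $\eta$ so that $C_* \eta^3 \le \tfrac14 \eta^{2+\alpha}$.

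The final step is to correct $Q_\infty$ to a quadratic $Q_k$ satisfying
\[
F_k\bigl(D^2 P_{0,k} + r_k^\alpha D^2 Q_k,\; D P_{0,k}(0) + r_k^{1+\alpha} D Q_k(0),\; P_{0,k}(0) + r_k^{2+\alpha} Q_k(0),\; 0\bigr) = f_k(0).
\]
This identity holds for $Q_k = 0$ by the normalization of $P_{0,k}$, and the partial derivative with respect to, say, the $(1,1)$-entry of $D^2 Q_k$ is bounded below by $r_k^\alpha \lambda$ thanks to uniform ellipticity of $F_k$; the implicit function theorem therefore produces $Q_k$ with $\|Q_k - Q_\infty\| \to 0$. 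Setting $P_k(x) := P_{0,k}(x) + r_k^{2+\alpha} Q_k(x/r_k)$, uniform convergence gives, for $k$ large,
\[
\|u_k - P_k\|_{L^\infty(B_{\eta r_k})} = r_k^{2+\alpha} \|v_k - Q_k\|_{L^\infty(B_\eta)} \le r_k^{2+\alpha} \eta^{2+\alpha} = (\eta r_k)^{2+\alpha},
\]
and $\|P_k - P_{0,k}\|_{\eta r_k} = r_k^{2+\alpha} \|Q_k\|_\eta \le C (\eta r_k)^{2+\alpha}$, contradicting the choice of $u_k$. The main obstacle is enforcing the nonlinear side-condition exactly; the natural Taylor polynomial of $v_\infty$ only satisfies the linearized equation, and must be corrected via the implicit function theorem, using that uniform ellipticity of $F_k$ survives rescaling because $\|P_{0,k}\|$ is small and $r_k \le r_0$. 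A secondary point is that Corollary \ref{coro: translated Holder reg} applies only when the Pucci parameter is large and only down to scales $\sqrt{2\rho_0/\rho_k}$; choosing $r_0$ sufficiently small neutralizes both restrictions.
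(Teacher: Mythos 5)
The overall architecture you chose — contradiction via compactness, rescaling by $r_k^{2+\alpha}$, passage to a linear constant-coefficient limit, Taylor approximation, then correction of the quadratic to meet the nonlinear side-condition — is exactly the one the paper uses, so the template is right. But there is a genuine gap in how you set up the contradiction sequence, and it infects two downstream steps.

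You negate only the existence of $\delta_0$ for a fixed $r_0$, obtaining $\delta_k \downarrow 0$ but $r_k\in(0,r_0]$, and later you allow $r_k\to r_\infty\in[0,r_0]$ with $r_\infty$ possibly positive. The correct negation must take $r_0=\delta_0=1/k$, which forces $r_k\to 0$ — and the paper's proof relies on this in an essential way. First, Corollary \ref{coro: translated Holder reg} gives Hölder control only above the scale $\sqrt{2\rho_0/\rho_k}$ with $\rho_k=\rho/(2 r_k^\alpha)$. For a fixed $r_0$ this threshold is a fixed positive number $s_0=\sqrt{4\rho_0 r_0^\alpha/\rho}$, and a family of continuous functions that is uniformly Hölder only above a fixed positive scale need not be equicontinuous (consider $v_k(x)=\sin(kx)$: the large-scale bound is trivially satisfied, yet Arzel\`a--Ascoli fails). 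Your claim that ``choosing $r_0$ sufficiently small neutralizes'' this restriction is incorrect: for any fixed $r_0>0$ the threshold is still bounded below, so no $\delta$ works in the equicontinuity definition for small $\varepsilon$. The paper instead exploits $r_k\to 0$, so $d_k=\sqrt{8\rho_0/(r_k^{-\alpha}\rho)}\to 0$, and for any $\varepsilon$ one picks $k_0$ with $2Cd_{k_0}^{\alpha_0}<\varepsilon$, treats the finitely many $v_k$, $k<k_0$, by their own uniform continuity, and for $k\ge k_0$ bridges short distances through an auxiliary point $x_3$ at distance $d_{k_0}$; that is what yields genuine equicontinuity.

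Second, and independently, if $r_\infty>0$ the rescaled operator $\widetilde F_k(M,\cdot)=r_k^{-\alpha}F_k(r_k^\alpha M+D^2P_{0,k},\ldots)$ does \emph{not} linearize in the limit: the Hessian argument retains a factor $r_k^\alpha$ that stays bounded away from $0$, so $\widetilde F_\infty$ is still fully nonlinear and only $\rho/(2r_\infty^\alpha)$-uniformly elliptic. Your assertion that $v_\infty$ solves a linear constant-coefficient equation with $|b_\infty|\le r_\infty b_0$, $|c_\infty|\le r_\infty^{1+\alpha}c_0$ already betrays this: those lower-order coefficients only appear precisely because you have not taken $r_\infty=0$, and they are the least of the problem — the second-order part is the one that fails to linearize. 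Without linearization there is no $C^3$ regularity and no quadratic Taylor approximation to speak of, so Step 3 cannot proceed. Once the contradiction sequence is set up so that $r_k\to 0$ (as in the paper), the lower-order terms vanish, $D_MF_k$ collapses onto a constant matrix $A$ via the modulus $\omega_F$, and the rest of your argument (including the implicit-function-theorem correction of the Taylor polynomial to satisfy $F_k(\ldots)=f_k(0)$, which is a perfectly acceptable alternative to the paper's explicit perturbation by $\tfrac{a_k}{2}|x|^2$ with $a_k=o(1)$) goes through as written.
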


    \begin{proof}
        We prove this lemma by contradiction. If this lemma fails, for universal $\eta\in(0,1)$ and $C>0$ to be chosen later, there exist sequences $\{F_{k}\}$, $\{u_{k}\}$, $\{f_{k}\}$, $\{P_{k}\}$ and $\{r_{k}\}$, such that

       i) $\{F_{k}\}$ satisfy $\mathrm{H}1), \mathrm{H}2'),\mathrm{H3')}$ and $F_{k}(D^2u_{k}, Du_{k}, u_{k}, x)=f_{k}$ on $B_{1}$ in the viscosity sense;

       ii) $r_{k}\leq 1/k$, $\|u_{k}-P_{k}\|_{L^{\infty}(B_{r_{k}})}\leq r_{k}^{2+\alpha}$ and $\|f_{k}-f_{k}(0)\|_{L^{\infty}(B_{r_{k}})}\leq r_{k}^{\alpha}/k$;

       iii) For any $\|M\|,|p|, |z|\leq \rho$, there holds $\|F_{k}(M,p,z,x)-F_{k}(M,p,z,0)\|_{L^{\infty}(B_{r_{k}})}\leq r_{k}^{\alpha}/k$;
       
       iv)  $\|P_{k}\|\leq C/k^{\alpha}$ with $F_{k}(D^2P_{k}, DP_{k}(0), P_{k}(0),0)=f_{k}(0)$;

       v) However, for any $k$, and any quadratic polynomial $P$ with $F_{k}(D^2P, DP(0), P(0),0)=f_{k}(0)$ and $\|P-P_{k}\|_{\eta r_{k}}\leq C(\eta r_{k})^{2+\alpha}$, we have $\|u_{k}-P\|_{L^{\infty}(B_{\eta r_{k}})}>(\eta r_{k})^{2+\alpha}$.\ \\
       
       \emph{Step 1.} Passing to limit. 
       
       Since $\{F_{k}\}$ are $\rho$-uniformly elliptic, and $D_{M}F$ has a uniform modulus of continuity $\omega_{F}$ near the origin. By Arzela-Ascoli theorem, up to subsequence, we deduce that $D_{M}F_{k}(0,0,0,0)$ converges to some matrix $A=(a^{ij})$ with $\lambda I\leq A\leq \Lambda I$.

       Consider the rescaled function 
       \begin{align}\label{eq: def of rescaled function vk}
           v_{k}(x)=\dfrac{u_{k}(r_{k}x)-P_{k}(r_{k}x)}{r_{k}^{2+\alpha}},\qquad \text{for}\ x\in B_{1}.
       \end{align}
       then $v_{k}$ is the viscosity solution to 
       \[ \widetilde{F}_{k}(D^2v_{k}, Dv_{k}, v_{k}, x)=\widetilde{f}_{k}(x)\qquad \mathrm{on}\ B_{1}, \]
       where
       \[\widetilde{F}_{k}(M,p,z,x)=\dfrac{1}{r_{k}^{\alpha}}F_{k}(r_{k}^{\alpha}M+D^2P_{k}, r_{k}^{1+\alpha}p+DP_{k}(r_{k}x)+ r^{2+\alpha}z+P_{k}(r_{k}x), r_{k}x)    \]
       and
       \[   \widetilde{f}_{k}(x)=\dfrac{1}{r_{k}^{\alpha}}f_{k}(r_{k}x).     \]
       Since each $F_{k}$ is $\rho$-uniformly elliptic, and $\|P_{k}\|\to0$, we know that for sufficiently large $k$,  $\widetilde{F}_{k}$ is $\frac{1}{2}r_{k}^{-\alpha}\rho$-uniformly elliptic with same ellipticity constants.
       Furthermore, we also have $\|v_{k}\|_{L^{\infty}(B_{1})}\leq 1\leq \frac{1}{4}r_{k}^{-\alpha}\rho$. Applying Lemma \ref{LEMMA: relation between Pucci and solutions} with $\phi=0$, we obtain 
       \[ v_{k}\in S^{*}_{\frac{1}{4}r_{k}^{-\alpha}\rho}(\lambda, \Lambda, b_{0}, f_{k}^{*}), \]
       where
       \begin{align*}
           f_{k}^{*}(x)&=|\widetilde{f}_{k}-\widetilde{F}_{k}(0,0,0,x)|+r_{k}^2c_{0}|v_{k}(x)|\\
           &=\dfrac{|f_{k}(r_{k}x)-F_{k}(D^2P_{k},DP_{k}(r_{k}x), P(r_{k}x), r_{k}x)|}{r_{k}^{\alpha}}+r_{k}^2c_{0}|v_{k}(x)|.
       \end{align*}
        Notice that $F_{k}(D^P_{k}, DP_{k}(0), P_{k}(0), 0)=f_{k}(0)$, we have
        \begin{align*}
            &|f_{k}(r_{k}x)-F_{k}(D^2P_{k},DP_{k}(r_{k}x), P(r_{k}x), r_{k}x)|\\
            &\leq |f_{k}(r_{k}x)-f_{k}(0)|+|F_{k}(D^2P_{k},DP_{k}(r_{k}x), P(r_{k}x), r_{k}x)- F_{k}(D^2P_{k},DP_{k}(r_{k}x), P(r_{k}x), 0)|\\
            &\ \quad +|F_{k}(D^2P_{k},DP_{k}(r_{k}x), P(r_{k}x), 0)- F_{k}(D^2P_{k}, DP_{k}(0), P_{k}(0), 0)|\\
            &\ \leq \dfrac{r_{k}^{\alpha}}{k}+\dfrac{r_{k}^{\alpha}}{k}+b_{0}|DP_{k}(r_{k}x)-DP_{k}(0)|+ c_{0}|P_{k}(r_{k}x)-P_{k}(0)|=2\dfrac{r_{k}^{\alpha}}{k}+ O(r_{k}).
        \end{align*}
        Let $\rho_{0}$ and $\sigma$ be the constants in Corollary \ref{coro: translated Holder reg}, for $k$ sufficiently large, we can ensure that
        \[ \dfrac{1}{4}r_{k}^{-\alpha}\rho\ge 2\rho_{0}\qquad \text{and}\qquad \|f_{k}^{*}\|_{C^{0,\alpha}(B_{1})}\leq \frac{2}{k}+O(r_{k}^{1-\alpha})\leq \sigma.  \]
        Applying Corollary \ref{coro: translated Holder reg} for $v_{k}$, we have for some universal $\alpha_{0}\in (0,1)$,
       \begin{align*}
           |v_{k}(x_{1})-v_{k}(x_{2})|\leq C|x_{1}-x_{2}|^{\alpha_{0}}, \quad \text{for any}\ x_{1},x_{2}\in B_{1/2}\ \text{and}\ \sqrt{\dfrac{8\rho_{0}}{r_{k}^{-\alpha}\rho}}\leq |x_{1}-x_{2}|\leq \dfrac{1}{2}.
       \end{align*}

       Set $d_{k}= \sqrt{\frac{8\rho_{0}}{r_{k}^{-\alpha}\rho}} $. Since $r_{k}\to 0$, for any $\varepsilon>0$, there exists $k_{0}\in\bb{N}^{+}$, such that for any $k\ge k_{0}$, we have $2Cd_{k}^{\alpha}<\varepsilon$ . Now for any $x_{1},x_{2}\in B_{1/2}$, with $|x_{1}-x_{2}|\leq d_{k_{0}}$, we can choose $x_{3}\in B_{1/2}$ such that $|x_{1}-x_{3}|=|x_{2}-x_{3}|=d_{k_{0}}$. Now for any $k\ge k_{0}$, we have
       \[|v_{k}(x_{1})-v_{k}(x_{2})|\leq |v_{k}(x_{1})-v_{k}(x_{3})|+|v_{k}(x_{2})-v_{k}(x_{3})|\leq 2Cd_{k_{0}}^{\alpha}<\varepsilon. \]
       Therefore, $\{v_{k}\}$ is equicontinuous. By Arzela-Ascoli theorem, up to subsequence, $v_{k}\to v_{0}$ in $C(B_{1/2})$.\ \\

       \emph{Step 2.} Show that $v_{0}$ is the viscosity solution to $a^{ij}D^{2}_{ij}v_{0}=0$ on $B_{1/2}$.  

       For any $x_{0}\in B_{1/2}$ and $\varphi\in C^2$ such that touches $v_{0}$ from above at $x_{0}$. From the uniform convergence of $\{v_{k}\}$, for sufficiently large $k$, there exist sequences $c_{k}\to 0$ and $x_{k}\to x_{0}$, such that $\varphi+c_{k}$ touches $v_{k}$ from above at $x_{k}$.
       Recall the relation \eqref{eq: def of rescaled function vk} between $v_{k}$ and $u_{k}$ , we know that $\widetilde{\varphi}_{k}(x):= r_{k}^{2+\alpha}(\varphi(r_{k}^{-1}x)+c_{k})+P_{k}(x)$ touches $u_{k}$ by above at $\widetilde{x}_{k}:= r_{k}x_{k}$. Now using the equation of $u_{k}$, we obtain 
       \begin{align*}
           f_{k}(\widetilde{x}_{k})&\leq F_{k}(r_{k}^{\alpha} D^{2}\widetilde{\varphi}+D^2P_{k}, r_{k}^{1+\alpha}D\widetilde{\varphi}+DP_{k}, r_{k}^{2+\alpha}\widetilde{\varphi}+P_{k}, \widetilde{x}_{k} ).
       \end{align*}
       To estimate the right-hand side, for sufficiently large $k$, we rewrite it by
       \begin{align*}
           \mathrm{RHS}&= F_{k}(r_{k}^{\alpha} D^{2}\widetilde{\varphi}+D^2P_{k}, r_{k}^{1+\alpha}D\widetilde{\varphi}+DP_{k}, r_{k}^{2+\alpha}\widetilde{\varphi}+P_{k}, \widetilde{x}_{k} )\\
           &\quad -F_{k}(r_{k}^{\alpha} D^{2}\widetilde{\varphi}+D^2P_{k}, r_{k}^{1+\alpha}D\widetilde{\varphi}+DP_{k}, r_{k}^{2+\alpha}\widetilde{\varphi}+P_{k}, 0 )\\
           &\quad +F_{k}(r_{k}^{\alpha} D^{2}\widetilde{\varphi}+D^2P_{k}, r_{k}^{1+\alpha}D\widetilde{\varphi}+DP_{k}, r_{k}^{2+\alpha}\widetilde{\varphi}+P_{k}, 0 )\\
           &\quad -F_{k}(r_{k}^{\alpha} D^{2}\widetilde{\varphi}+D^2P_{k}, DP_{k}(0), P_{k}(0), 0 )\\
           &\quad +F_{k}(r_{k}^{\alpha} D^{2}\widetilde{\varphi}+D^2P_{k}, DP_{k}(0), P_{k}(0), 0 )\\
           &\quad -F_{k}(D^2P_{k}, DP_{k}(0), P_{k}(0), 0 ) + f_{k}(0)\\
           &\leq  \dfrac{r_{k}^{\alpha}}{k}+b_{0}r_{k}^{1+\alpha}\|D\varphi\|_{L^{\infty}} +b_{0}|DP_{k}(\widetilde{x}_{k})-DP_{k}(0)|+ c_{0}r_{k}^{2+\alpha}\|\varphi\|_{L^{\infty}}+ c_{0}|P_{k}(\widetilde{x}_{k})-P_{k}(0)|\\
           &\quad +F^{ij}_{k}(\theta r_{k}^{\alpha} D^2\widetilde{\varphi}+ D^2P_{k}, DP_{k}(0), P_{k}(0), 0) D^2_{ij}\widetilde{\varphi}(\widetilde{x}_{k})+f_{k}(0)\\
           &\leq \dfrac{r_{k}^{\alpha}}{k}+O(r_{k})+ r_{k}^{\alpha} F^{ij}_{k}(\theta r_{k}^{\alpha} D^2\widetilde{\varphi}+ D^2P_{k}, DP_{k}(0), P_{k}(0), 0) D^2_{ij}\varphi(x_{k})+ f_{k}(0).
       \end{align*}
       Here $F_{k}^{ij}(M,p,z,x)=\frac{\partial F_{k}}{\partial m_{ij}}(M,p,z,x)$, and $\theta\in(0,1)$ is obtained from the mean value theorem. Since $D_{M}F$ is uniformly  continuous with modulus $\omega_{F}$, we conclude that
       \begin{equation}
           \begin{split}
               0&\leq r_{k}^{\alpha}F^{ij}_{k}(0,0,0,0)D^{2}_{ij}\varphi(x_{k})+Cr_{k}^{\alpha}\omega_{F}(Cr_{k}^{\alpha})+ \dfrac{r_{k}^{\alpha}}{k}+ O(r_{k})+
               |f_{k}(r_{k}x_{k})-f_{k}(0)|\\
               &= r_{k}^{\alpha}( F_{k}^{ij}(0,0,0,0)D^2_{ij}\varphi(x_{k})+o(1) ).
           \end{split}
       \end{equation}
       
       Therefore, $F_{k}^{ij}(0,0,0,0)D^2_{ij}\varphi(x_{k})+o(1)\ge 0$, sending $k\to\infty$, we get $a^{ij}D^2_{ij}\varphi(x_{0})\ge0$. This implies $v_{0}$ is a viscosity subsolution to $a^{ij}D^2_{ij}v_{0}=0$. Similarly, we can also prove that $v_{0}$ is a supersolution. \ \\ 

       \emph{Step 3.} Derive the contradiction. 

      Now, $v_{0}$ is the viscosity solution to a linear uniformly elliptic equation with constant coefficients. Moreover, $\|v_{0}\|_{L^{\infty}(B_{1/2})}\leq 1$. From the classical elliptic theory, we have $v_{0}\in C^{\infty}(B_{1/2})$, with $\|D^{m}v_{0}\|_{L^{\infty}(B_{1/4})}\leq C_{m}=C_{m}(n,\lambda,\Lambda).$ Let $P$ be the quadratic Taylor polynomial of $v_{0}$, that is $P(x)=v_{0}(0)+D v_{0}(0)\cdot x+\frac{1}{2}x^{T}D^2v_{0}(0)x$, then 
       \[ \|P\|\leq C_{2}\qquad \text{and} \qquad \|v_{0}-P\|_{L^{\infty}(B_{\eta})}\leq C_{3}\eta^3.\]
       By taking $\eta$ small and $C$ large such that $C_{3}\eta^{1-\alpha}\leq 1/2$ and $C_{2}\leq (C-1)\eta^{2+\alpha}$, then we have
       \begin{align}\label{eq: estimate on P and v0-P}
           \|P\|\leq (C-1)\eta^{2+\alpha}\qquad \text{and} \qquad \|v_{0}-P\|_{L^{\infty}(B_{\eta})}\leq \dfrac{1}{2}\eta^{2+\alpha}.
       \end{align}

       Similar as before, using the uniform continuity of $D_{M}F$ again, we have
       \begin{align*}
           F_{k}(r_{k}^{\alpha}D^2P+D^2P_{k}(0), r_{k}^{1+\alpha} DP(0)+DP_{k}(0), r_{k}^{2+\alpha}P(0)+P_{k}(0), 0)=f_{k}(0)+o(r_{k}^{\alpha}).
      \end{align*}
      By the $\rho\,$-uniform ellipticity of $F$, for $k$ large, there exists $a_{k}=o(1)$, such that
      \begin{align*}
           F_{k}(r_{k}^{\alpha}(D^2P+a_{k}I)+D^2P_{k}(0), r_{k}^{1+\alpha} DP(0)+DP_{k}(0), r_{k}^{2+\alpha}P(0)+P_{k}(0), 0)=f_{k}(0).
      \end{align*}
      Now, we define
      \[ Q_{k}(x)=P_{k}(x)+r_{k}^{2+\alpha}\left(P(r_{k}^{-1}x)+\dfrac{a_{k}}{2r_{k}^2}|x|^2\right). \]
      Then $F_{k}(D^2Q_{k}, DQ_{k}(0), Q_{k}(0),0)=f_{k}(0).$
      By \eqref{eq: estimate on P and v0-P},  we also have
      \[\|Q_{k}-P_{k}\|_{\eta r_{k}}\leq r_{k}^{2+\alpha}(\|P\|+o(1))\leq C(\eta r_{k})^{2+\alpha}\]
      for sufficiently large $k$.
    However, by \eqref{eq: estimate on P and v0-P}, for sufficiently large $k$, there holds
      \begin{align*}
          \sup_{B_{\eta r_{k}}}|u_{k}-Q_{k}|&=r_{k}^{2+\alpha}\sup_{B_{\eta}}\left|v_{k}-P-\dfrac{a_{k}}{2}|x|^2\right| \\
          &=r_{k}^{2+\alpha}\left(\sup_{B_{\eta}}|v_{k}-v_{0}|+\sup_{B_{\eta}}|v_{0}-P|+o(1)\eta^2\right)\\
          &\leq r_{k}^{2+\alpha}\left(o(1)+\dfrac{1}{2}\eta^{2+\alpha}+o(1)\eta^2\right)\\
          &\leq (\eta r_{k})^{2+\alpha}.
      \end{align*}
    This contradicts (v) in the assumptions before step 1. Now the proof is complete.
    \end{proof}

    Finally, applying Caffarelli's iteration argument, we establish the interior pointwise $C^{2,\alpha}$ estimates for flat solutions, it implies Theorem \ref{theorem: main thm}.

    \begin{lemma}
       For $0<\alpha<1$. Let $F$ satisfy $\mathrm{H}1), \mathrm{H}2'),\mathrm{H}3')$, and let $u\in C(B_{1})$ be a viscosity solution to 
        \[F(D^2u, Du, u, x)=f\qquad \mathrm{on}\ B_{1}.\]
        Then there exist constants $r_{0},\delta,C>0$, depending only on $n,\alpha, \rho,\lambda,\Lambda, b_{0}, c_{0}$ and $\omega_{F}$, such that if
        \[ |F(M,p,z,x)-F(M,p,z,x')|\leq \delta |x-x'|^{\alpha}\quad \text{for all}\ \|M\|,|p|,|z|\leq \rho\ \text{and}\ x,x'\in B_1, \]
        and        
        \[ \|u\|_{L^{\infty}(B_{1})}\leq \delta, \qquad \|f\|_{C^{0,\alpha}(B_{1})}\leq \delta. \]
        Then there exists a quadratic polynomial $P$ with $F(D^2P, DP(0), P(0), 0)=f(0)$ and $\|P\|\leq C\delta$, such that 
        \[|u(x)-P(x)|\leq C\delta |x|^{2+\alpha},\qquad  \text{for any} \ x\in B_{r_{0}}.\]
    \end{lemma}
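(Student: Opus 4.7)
The plan is to iterate Lemma~\ref{lemma: improvement of flatness} at the geometric scales $r_k:=\eta^k r_0$, in the spirit of Caffarelli's pointwise $C^{2,\alpha}$ scheme, and then pass to the limit. I would first fix the universal constants $r_0,\delta_0,\eta,C_*$ furnished by that lemma and then take $\delta$ much smaller than $\delta_0$, $r_0^\alpha$, and $r_0^{2+\alpha}$.

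For the initialization, $\rho$-uniform ellipticity together with $F(0,0,0,0)=0$ yields, via the intermediate value theorem, some $a_0\in\bb{R}$ with $F(a_0 I,0,0,0)=f(0)$ and $|a_0|\le C|f(0)|/\lambda\le C\delta$; setting $P_0(x):=\tfrac{a_0}{2}|x|^2$ enforces the compatibility condition and gives $\|P_0\|\le C\delta\le C_* r_0^\alpha$. The flatness $\|u-P_0\|_{L^\infty(B_{r_0})}\le\delta+C\delta r_0^2\le r_0^{2+\alpha}$ then follows by shrinking $\delta$, and the H\"older hypotheses on $F$ and on $f$ immediately yield $\|F(M,p,z,\cdot)-F(M,p,z,0)\|_{L^\infty(B_{r_0})}\le\delta_0 r_0^\alpha$ and $\|f-f(0)\|_{L^\infty(B_{r_0})}\le\delta_0 r_0^\alpha$.

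Inductively I would produce quadratic polynomials $\{P_k\}$ satisfying $\|u-P_k\|_{L^\infty(B_{r_k})}\le r_k^{2+\alpha}$, the compatibility $F(D^2P_k,DP_k(0),P_k(0),0)=f(0)$, and the increment bound $\|P_{k+1}-P_k\|_{r_{k+1}}\le C_* r_{k+1}^{2+\alpha}$, by direct application of Lemma~\ref{lemma: improvement of flatness} at scale $r_k$. Decomposing $P_{j+1}-P_j$ into its constant, linear, and quadratic parts and reading off each from the scale-dependent norm yields constant-term bound $\le C_* r_{j+1}^{2+\alpha}$, linear-coefficient bound $\le C_* r_{j+1}^{1+\alpha}$, and Hessian bound $\le C_* r_{j+1}^\alpha$. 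Summing the resulting geometric series in $j$ and using $\|P_0\|\le C\delta$ produces $\|P_k\|_1\le C\delta+Cr_0^\alpha\le C_* r_0^\alpha$ for $\delta$ small enough, which closes the induction; the H\"older hypotheses at scale $r_k$ remain automatic since $\delta\le\delta_0$.

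Once the iteration is established, the increment bounds make $\{P_k\}$ Cauchy in $\|\cdot\|_1$, so $P_k\to P$ for some quadratic $P$, and joint continuity of $F$ gives $F(D^2P,DP(0),P(0),0)=f(0)$. For $x\in B_{r_0}\setminus\{0\}$, pick $k$ with $r_{k+1}<|x|\le r_k$; the elementary inequality $\|Q\|_r\le(r/s)^2\|Q\|_s$ valid for $s\le r$ and any quadratic $Q$ lets me sum $\|P_{j+1}-P_j\|_{L^\infty(B_{r_k})}\le C r_k^2 r_{j+1}^\alpha$ over $j\ge k$ to obtain $\|P-P_k\|_{L^\infty(B_{r_k})}\le C r_k^{2+\alpha}$, and combining with $\|u-P_k\|_{L^\infty(B_{r_k})}\le r_k^{2+\alpha}$ together with $|x|>\eta r_k$ yields $|u(x)-P(x)|\le C|x|^{2+\alpha}$. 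A final renormalization, dividing $u$ and $f$ by $\delta$ at the outset (with the corresponding rescaling of $F$) and scaling back at the end, produces the factor of $\delta$ in both $\|P\|\le C\delta$ and $|u-P|\le C\delta|x|^{2+\alpha}$. The main obstacle throughout is bookkeeping: the increment bound lives in the scale-dependent norm $\|\cdot\|_{r_{k+1}}$ while the hypothesis needed to iterate sits in the unscaled $\|\cdot\|_1$, so one must carefully track the conversion factor $r_{k+1}^{-2}$ on the Hessian piece and verify that the resulting geometric sum in $r_0^\alpha$ closes under the choice of a small enough $\delta$.
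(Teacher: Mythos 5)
Your core argument is the same as the paper's: initialize $P_0=\tfrac{a_0}{2}|x|^2$ via the intermediate value theorem, iterate Lemma~\ref{lemma: improvement of flatness} at scales $r_k=\eta^kr_0$ to produce $\{P_k\}$ satisfying the compatibility and increment bounds, check that the $\|\cdot\|_1$-norms stay below $C r_0^\alpha$ so the lemma can be reapplied, and then sum the increments to pass to the limit $P$. Your handling of the $\|\cdot\|_r$ bookkeeping and the conversion $\|Q\|_r\le (r/s)^2\|Q\|_s$ to control $\|P-P_k\|_{L^\infty(B_{r_k})}$ is correct and matches the paper.

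The one point that needs a remark is the ``final renormalization''. It is both unnecessary and would actually break the hypotheses. Unnecessary: the paper fixes $\delta$ once and for all through the relation $\delta=c^{-1}r_0^{2+\alpha}$, so $\delta$ and $r_0$ are \emph{fixed} universal constants and the bounds $\|P\|\le C r_0^\alpha$ and $|u-P|\le C|x|^{2+\alpha}$ obtained from the iteration already read as $\|P\|\le C'\delta$ and $|u-P|\le C'\delta|x|^{2+\alpha}$ for a possibly larger universal $C'$; there is nothing extra to extract. Problematic: if you replace $u,f$ by $u/\delta, f/\delta$ and $F$ by $\widetilde F(M,p,z,x):=\delta^{-1}F(\delta M,\delta p,\delta z,x)$, then the H\"older-in-$x$ modulus becomes
\[
|\widetilde F(M,p,z,x)-\widetilde F(M,p,z,x')|=\delta^{-1}|F(\delta M,\delta p,\delta z,x)-F(\delta M,\delta p,\delta z,x')|\le |x-x'|^\alpha,
\]
i.e.\ the coefficient jumps from $\delta$ up to $1$, which \emph{fails} the hypothesis $\le\delta_0 r^\alpha$ of Lemma~\ref{lemma: improvement of flatness} (since $\delta_0<1$). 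So the rescaled problem is not a legitimate input to the improvement lemma. Simply dropping the renormalization and reading off the constants from the fixed relationship $\delta=c^{-1}r_0^{2+\alpha}$ gives exactly the stated conclusion.
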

    \begin{proof}
        \emph{Step 1.} We first claim that there exists a sequence of quadratic polynomials $\{P_{k}\}_{k=0}^{\infty}$, such that
        \begin{align}\label{eq: u-Pk}
            \|u-P_{k}\|_{L^{\infty}(B_{\eta^k r_{0}})}\leq (\eta^k r_{0})^{2+\alpha},
        \end{align}
        and
        \begin{align}\label{eq: condition on Pk}
            F(D^2P_{k}, DP_{k}(0), P_{k}(0), 0)=f(0),\qquad \|P_{k}-P_{k-1}\|_{\eta^k r_{0}}\leq C(\eta^k r_{0})^{2+\alpha},
        \end{align}
        where $\eta, r_{0}$ and $C$ are universal constants in Lemma \ref{lemma: improvement of flatness}.

        We prove it by induction. Set $P_{-1}\equiv 0$. For $k=0$, since $F(0,0,0,x)=0$ and $F$ is uniformly elliptic in the neighborhood of $(0,0,0,x)$, for $\delta$ sufficiently small, there exists $t\in \bb{R}$, such that $F(tI,0,0,0)=f(0)$ with $|t|\leq |f(0)|/n\lambda\leq \delta/n\lambda$. Take $P_{0}=t|x|^2/2$, then
        \[ \|P_{0}\|_{r_{0}}\leq |t|r_{0}^2\leq Cr_{0}^{2+\alpha},\qquad  \|u-P_{0}\|_{L^{\infty}(B_{r_{0}})}\leq c(n,\lambda)\delta\leq r_{0}^{2+\alpha}.\]
        By taking $\delta=c^{-1}r_{0}^{2+\alpha}$,  \eqref{eq: u-Pk} and \eqref{eq: condition on Pk} hold for $k=0$.

        Suppose that the claim holds for $k=k_{0}$, for any $j\leq k_{0}$, \eqref{eq: condition on Pk}  implies $\|P_{j}-P_{j-1}\|\leq C(\eta^{j}r_{0})^{\alpha}$, then we have
        \[ \|P_{k_{0}}\|\leq\sum_{j=1}^{k_{0}}\|P_{j}-P_{j-1}\|+\|P_{0}\|\leq Cr_{0}^{\alpha}. \]
        Now we can applying Lemma \ref{lemma: improvement of flatness} for $r=\eta^{k_{0}}r_{0}$ to get a $P_{k_{0}+1}$ satisfying \eqref{eq: u-Pk} and \eqref{eq: condition on Pk}. By induction, the claim holds.

        \emph{Step 2.} Show that  $\{P_{k}\}$ will converges, and its limit is as desired.

        Write $P_{k}(x)-P_{k-1}(x)=a_{k}+b_{k}\cdot x+ x^{T}C_{k}x $. By \eqref{eq: condition on Pk}, we know that
        \[ |a_{k}|+(\eta^{k}r_{0})|b_{k}|+(\eta^{k}r_{0})^2\|C_{k}\|\leq C(\eta^kr_{0})^{2+\alpha}. \]
        Thus, $|a_{k}|\leq C(\eta^kr_{0})^{2+\alpha}, |b_{k}|\leq C(\eta^kr_{0})^{1+\alpha}$ and $\|C_{k}\|\leq C(\eta^kr_{0})^{\alpha}$. This implies $\sum_{k}(P_{k}-P_{k-1})$ converges, denote the limit quadratic polynomial by $P$, we have 
        \begin{align*}
            F(D^2P, DP(0), P(0), 0)=f(0) \quad \text{and}\quad P(x)=\sum_{k=0}^{\infty}a_{k}+\left(\sum_{k=0}^{\infty}b_{k}\right)\cdot x+x^{T}\left(\sum_{k=0}^{\infty}C_{k}\right)x.
        \end{align*}
        It is clear that
        \begin{align}\label{eq: estimate for P}
            \|P\|\leq C\delta\qquad \text{and} \qquad \sup_{B_{\eta^k r_{0}}}|P-P_{k}|\leq C\delta\eta^{(2+\alpha)k}.
        \end{align}

        Finally, for any $x\in B_{r_{0}}$, there exists an integer $k\ge0$, such that $\eta^{k+1}r_{0}< |x|\leq \eta^{k}r_{0}$. From \eqref{eq: u-Pk} and \eqref{eq: estimate for P}, we finally conclude that
        \[|u(x)-P(x)|\leq |u(x)-P_{k}(x)|+|P(x)P_{k}(x)|\leq C\delta\eta^{k(2+\alpha)}\leq C\delta|x|^{2+\alpha}.\]
        The proof is complete.
    \end{proof}
        
\section{Proof of Proposition \ref{PROP: sigma-k}} \label{Section: Proof of Proposition}
    In this section, we prove the partial regularity, Proposition \ref{PROP: sigma-k}, for the $\sigma_{k}\,$-Hessian equation. 

    \begin{definition}
        We say a $C^2$ function $u$ is $k$-convex, if its eigenvalues of Hessian satisfy
        \begin{equation}\label{eq: k-convexity}
            \lambda(D^2u)\in \Gamma_{k}:=\{\lambda\in \bb{R}^n: \sigma_{i}(\lambda)>0,\ \text{for}\ i=1,\cdots,k\}.
        \end{equation}
        We can also define the $k$-convexity for continuous functions if \eqref{eq: k-convexity} holds in the viscosity sense.
    \end{definition}
    
    We first consider the almost everywhere twice differentiability assertion in Proposition \ref{PROP: sigma-k}. The classical Alexandrov theorem (see \cite[Theorem 6.9]{Evans-Gariepy}) states that any convex function are twice differentiable. Later, Chaudhuri-Trudinger \cite{Chaudhuri-Trudinger} extended this to $k$-convex functions with $k> n/2$. For $k\leq n/2$, twice differentiability almost everywhere may not hold for general $k$-convex functions. However, this this conclusion can hold if we impose an additional equation. See \cite[Proposition 4.1]{Shankar-Yuan-Ann-2025} for 2-convex solutions to the $\sigma_{2}$ equation $\sigma_{2}=1$. We now prove this almost everywhere twice differentiability result for solutions to the $\sigma_{k}$ equation $\sigma_{k}=f$.
    
    \begin{lemma}[Chaudhuri-Trudinger]\label{LEMMA: Hessian as Radon measures}
        Let $u$ be a $2$-convex function on $B_{1}$. Then in the distributional sense, the Hessian of $u$ can be interpreted as Radon measures $[D^2u]=[\mu^{ij}]$ with $\mu^{ij}=\mu^{ji}$, i.e.
        \[  \int u\partial^2_{ij}\varphi= \int \varphi\, \mathrm{d}\mu^{ij}\qquad \text{for any}\ \varphi\in C_{c}^{\infty}(B_{1}).  \]
    \end{lemma}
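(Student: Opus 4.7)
The plan is to reduce to the smooth case via approximation, prove a pointwise bound $|\partial_{ij}^2 v|\le C(n)\Delta v$ for smooth $2$-convex $v$, and then pass to the limit in the distributional identity.

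For the pointwise bound, suppose $D^2v$ has eigenvalues $\lambda_1\ge\cdots\ge\lambda_n$; the only serious case is $\lambda_n<0$. Splitting off the last variable gives
\[
\sigma_2(\lambda_1,\dots,\lambda_{n-1})=\sigma_2(\lambda)+|\lambda_n|\bigl(\sigma_1(\lambda)+|\lambda_n|\bigr)\ge|\lambda_n|\bigl(\sigma_1(\lambda)+|\lambda_n|\bigr),
\]
using $\sigma_2(\lambda)\ge 0$, while Newton's inequality applied to $(\lambda_1,\dots,\lambda_{n-1})$ (valid for $n\ge 3$) yields $(\sigma_1(\lambda)+|\lambda_n|)^2\ge\frac{2(n-1)}{n-2}\sigma_2(\lambda_1,\dots,\lambda_{n-1})$. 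Combining these and cancelling $\sigma_1(\lambda)+|\lambda_n|>0$ delivers $|\lambda_n|\le\frac{n-2}{n}\Delta v$. The bound on $\lambda_1$ then follows from $\lambda_1=\Delta v-\sum_{i>1}\lambda_i\le\Delta v+(n-1)|\lambda_n|$. The case $n=2$ is trivial because $\sigma_2>0$ forces both eigenvalues to be positive.

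For the approximation I would write $u_\varepsilon=u*\eta_\varepsilon$ (or appeal directly to the Trudinger--Wang characterisation of continuous viscosity $2$-convex functions as locally uniform limits of smooth $2$-convex functions). The convexity of the cone $\{M\in\mathcal{S}^{n\times n}:\lambda(M)\in\overline{\Gamma_2}\}$ guarantees that mollification preserves smooth $2$-convexity, and one has $u_\varepsilon\to u$ locally uniformly. Independently, $\Delta u\ge 0$ as a distribution, so by Schwartz's theorem $\Delta u$ is already a nonnegative Radon measure on $B_1$. Then for any $\varphi\in C_c^\infty(B_1)$ with $\varphi\ge 0$, two integrations by parts together with the Step~1 bound give
\[
\Bigl|\int u_\varepsilon\,\partial_{ij}^2\varphi\,dx\Bigr|=\Bigl|\int \partial_{ij}^2 u_\varepsilon\cdot\varphi\,dx\Bigr|\le C(n)\int \Delta u_\varepsilon\cdot\varphi\,dx=C(n)\int u_\varepsilon\,\Delta\varphi\,dx.
\]
Letting $\varepsilon\to 0$ via uniform convergence of $u_\varepsilon$ and invoking $\int u\,\Delta\varphi=\int\varphi\,d(\Delta u)$ yields $\bigl|\int u\,\partial_{ij}^2\varphi\,dx\bigr|\le C(n)\int\varphi\,d(\Delta u)$ for all such $\varphi$. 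A standard decomposition of a general test function into a difference of two nonnegatives, each dominated by a fixed bump, upgrades this to the order-zero estimate $|T_{ij}(\psi)|\le C(n)\|\psi\|_{C^0(K)}\Delta u(K)$ for $\psi\in C_c^\infty(K)$ and compact $K\subset B_1$, where $T_{ij}(\psi):=\int u\,\partial_{ij}^2\psi$. The Riesz representation theorem then produces signed Radon measures $\mu^{ij}$ with $T_{ij}(\psi)=\int\psi\,d\mu^{ij}$, and the symmetry $\mu^{ij}=\mu^{ji}$ is immediate from $\partial_{ij}^2=\partial_{ji}^2$.

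The main obstacle is the approximation step. For smooth $2$-convex functions everything is direct, but producing smooth $2$-convex mollifications of a merely continuous viscosity $2$-convex function requires the (classical but nontrivial) input from Trudinger--Wang's theory of $k$-convex functions. Once that input is granted, Newton's inequality and the distributional bookkeeping above finish the proof.
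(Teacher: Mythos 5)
The paper does not prove this lemma---it is stated with the attribution ``Chaudhuri--Trudinger'' and cited from \cite{Chaudhuri-Trudinger} as a known result, so there is no internal proof to compare against. Your reconstruction follows the standard route and the key algebra checks out: with $\lambda'=(\lambda_1,\dots,\lambda_{n-1})$ and $\lambda_n<0$, the identity $\sigma_2(\lambda')=\sigma_2(\lambda)+|\lambda_n|\,\sigma_1(\lambda')$ is exact, Newton's inequality on $n-1$ variables gives $\sigma_1(\lambda')^2\ge\frac{2(n-1)}{n-2}\sigma_2(\lambda')$, and since $\sigma_1(\lambda')=\sigma_1(\lambda)+|\lambda_n|>0$ one may divide to obtain $|\lambda_n|\le\frac{n-2}{n}\Delta v$ and then $\|D^2v\|\le C(n)\Delta v$; the $n=2$ case reduces to convexity as you say. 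The distributional bookkeeping is also sound: $\Delta u\ge 0$ in $\mathcal{D}'$ makes $\Delta u$ a nonnegative Radon measure, the mollified estimate passes to the limit by local uniform convergence, the decomposition $\psi=\tfrac12(\|\psi\|_\infty\chi+\psi)-\tfrac12(\|\psi\|_\infty\chi-\psi)$ upgrades the nonnegative-test-function bound to an order-zero estimate on each compact, and Riesz representation plus $\partial_{ij}=\partial_{ji}$ finishes. The one genuine external input, which you correctly flag, is the approximation step: smooth $2$-convex mollifications of a merely continuous viscosity $2$-convex function require the Trudinger--Wang identification of viscosity and distributional $k$-subharmonicity together with the convexity of the matrix cone $\{M:\lambda(M)\in\overline{\Gamma}_2\}$. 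Granting that input, the proof is correct.
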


   We also need the following gradient estimates:
   \begin{lemma}\label{LEMMA: Gradient estimate}
       For $k\ge 2$, let $u$ be a $C^3$ $k$-convex solution to $\sigma_{k}(D^2u)=f$ on $B_{R}$. Suppose $\psi= f^{1/k}$ is Lipschitz. Then we have
       \begin{align}\label{eq: weighted gradient estimate}
        \sup_{\substack{x,y\in B_{R}\\ x\neq y}}d_{x,y}^{n+1}\dfrac{|u(x)-u(y)|}{|x-y|}\leq C(n)\left( \int_{B_{R}}|u|\,\mathrm{d}x+R^{n+3}\sup_{B_{R}}|D\psi| \right),
        \end{align}
        where $d_{x,y}=\min\{d_{x},d_{y}\}$, and $d_{x}=\mathrm{dist}(x,\partial B_{R})=R-|x|$.
   \end{lemma}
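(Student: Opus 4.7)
My plan is to reduce the weighted two-point estimate to a pointwise gradient bound of the form
\[ d_z^{n+1}|Du(z)|\le C(n)\Bigl(\int_{B_R}|u|\,dx + R^{n+3}\sup_{B_R}|D\psi|\Bigr) \qquad \text{for every } z\in B_R, \]
and then deduce the lemma from this pointwise bound using the convexity of the ball. For the reduction, fix $x,y\in B_R$ with $x\ne y$ and parametrize the segment by $\gamma(t)=(1-t)x+ty$. Since $B_R$ is convex, the distance-to-boundary function is concave on $B_R$, and hence $d_{\gamma(t)}\ge \min(d_x,d_y)=d_{x,y}$ for every $t\in[0,1]$. The fundamental theorem of calculus gives
\[ \frac{|u(x)-u(y)|}{|x-y|}\le\sup_{t\in[0,1]}|Du(\gamma(t))|, \]
and applying the pointwise bound at $\gamma(t)$ immediately yields the statement.

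For the pointwise bound, I would argue in two steps. Step 1 is a Chou-Wang/Pogorelov-type interior gradient estimate. Differentiating $\sigma_k(D^2u)=\psi^k$ gives $T_{k-1}^{ij}(D^2u)D_{ij}(D_lu)=k\psi^{k-1}D_l\psi$, where $T_{k-1}$ is the positive definite Newton tensor (positivity comes from the $k$-admissibility of $u$) and $\partial_jT_{k-1}^{ij}=0$. Applying the maximum principle to an auxiliary function of the form $\eta^{2}|Du|^2 e^{\beta u}$ with $\eta$ a cutoff on $B_{d_z}(z)$, and exploiting the divergence-free property of $T_{k-1}$, should yield
\[ d_z|Du(z)|\le C\Bigl(\sup_{B_{d_z}(z)}|u|+d_z^3\sup_{B_R}|D\psi|\Bigr). \]
Step 2 converts the $L^\infty$-norm of $u$ to its $L^1$-norm. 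Since $u$ is $k$-convex with $k\ge 2$, Newton-Maclaurin yields $\Delta u=\sigma_1(D^2u)\ge n\,\sigma_k(D^2u)^{1/k}\ge 0$, so $u$ is classically subharmonic and $u^+$ satisfies the sub-mean-value inequality $\sup_{B_{d_z/2}(z)}u^+\le Cd_z^{-n}\int_{B_{d_z}(z)}u^+$. For the negative part $u^-$, I would appeal to an Alexandrov-Bakelman-Pucci-type estimate for the linearized operator $T_{k-1}^{ij}D_{ij}$, or integrate the divergence identity $k\sigma_k=\partial_j(T_{k-1}^{ij}\partial_iu)$ against a suitable test function, to obtain analogous control of $\sup u^-$ in terms of $\int|u|$ plus a contribution from $\sup|D\psi|$. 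Chaining the two steps converts the $d_z^{-1}$ weight in the gradient bound into the desired $d_z^{-(n+1)}$, while the $d_z^3$ term picks up the extra $d_z^n\le R^n$, producing $R^{n+3}\sup|D\psi|$.

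\textbf{Main obstacle.} The most delicate part is the control of the negative part of $u$: subharmonicity only yields one-sided mean-value control, and $k$-convexity alone does not furnish a lower bound on $u$ in terms of an interior integral. The key ingredient is that the $k$-Hessian equation $\sigma_k(D^2u)=\psi^k$ supplies an ABP-type bound through the divergence-free Newton tensor, which together with the Lipschitz assumption on $\psi$ gives the required control of $u^-$ with exactly the right power of $d_z$ so that the final bookkeeping produces the stated $R^{n+3}\sup|D\psi|$ term rather than a larger or smaller power of $R$.
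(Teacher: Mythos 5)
Your reduction to a pointwise weighted gradient bound via the segment argument is sound, and your Step 1 is in substance Trudinger's interior gradient estimate, which the paper simply cites (it gives $d_z|Du(z)|\le C(n)\bigl(\sup_{B_{d_z/2}(z)}|u|+d_z^3\sup_{B_R}|D\psi|\bigr)$). The gap, as you yourself flag, is Step 2: you need an unconditional bound of the form $\sup_{B_{d_z/2}(z)}|u|\le C\,d_z^{-n}\int_{B_R}|u|+\ldots$, and the tools you propose do not deliver it. Subharmonicity via Newton--Maclaurin handles only $u^+$. For $u^-$: the ABP estimate for the linearized operator $T_{k-1}^{ij}D_{ij}$ is a boundary-to-interior inequality, $\sup_{B_r}u^-\le\sup_{\partial B_r}u^-+\ldots$, which never trades a supremum for an $L^1$ integral; and integrating $k\sigma_k=\partial_j\bigl(T_{k-1}^{ij}\partial_i u\bigr)$ against a cutoff $\eta$ and using $\partial_j T_{k-1}^{ij}=0$ twice gives $k\int\sigma_k\,\eta=\int u\,T_{k-1}^{ij}\partial_{ij}\eta$, in which the Newton tensor $T_{k-1}^{ij}(D^2u)$ is nonlinear and a priori unbounded, so this identity does not control $\sup u^-$ by $\int|u|$ either. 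You are attempting to prove a pointwise $L^\infty\lesssim L^1$ bound that is not furnished by the structure you invoke.

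The paper's route is genuinely different and sidesteps the issue entirely. It introduces the weighted norms $|u|_{0;R}^{(n)}=\sup_{x\in B_R} d_x^{\,n}|u(x)|$ and $[u]_{0,1;R}^{(n)}$ (the quantity in \eqref{eq: weighted gradient estimate}), and uses the purely real-analytic interpolation inequality
\[
|u|_{0;R}^{(n)}\le\varepsilon\,[u]_{0,1;R}^{(n)}+C(n)\,\varepsilon^{-n}\int_{B_R}|u|,
\]
which requires no PDE input: one compares $u(z)$ with its average over $B_{\mu d_z}(z)$, bounds the oscillation by $[u]_{0,1;R}^{(n)}$, and optimizes in $\mu$. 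Combining this with the segment/gradient-estimate bound $[u]_{0,1;R}^{(n)}\le C(n)\bigl(|u|_{0;R}^{(n)}+R^{n+3}\sup_{B_R}|D\psi|\bigr)$ and choosing $\varepsilon=1/(2C(n))$, the term $\varepsilon[u]_{0,1;R}^{(n)}$ absorbs into the left-hand side, yielding the lemma. (Finiteness of $[u]_{0,1;R}^{(n)}$, needed for the absorption, follows from $u\in C^3$ by first working on $B_{R'}$ with $R'<R$ and letting $R'\uparrow R$.) The essential point you are missing is that the $L^\infty$-to-$L^1$ conversion is allowed to carry an extra $\varepsilon$-absorbable Lipschitz-seminorm term; once that is permitted, the conversion is elementary and no one-sided PDE estimates are needed.
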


    \begin{remark}\label{RMK: Lipschitz Reg}
        By solving the Dirichlet problem with smooth approximating data \cite{CNS3}, we can show that $k$-convex viscosity solutions to $\sigma_{k}=f$ are locally Lipschitz with estimate (\ref{eq: weighted gradient estimate}).
    \end{remark}

    \begin{proof}
        We first introduce some notations, which is same as \cite[P585]{Trudinger-Wang-Annals}. Denote
        \begin{align*}
            |u|_{0;R}^{(n)}=\sup_{x\in B_{R}}d_{x}^{n}|u(x)|,\qquad [u]_{0,1;R}^{(n)}= \sup_{\substack{x,y\in B_{R}\\ x\neq y}}d_{x,y}^{n+1}\dfrac{|u(x)-u(y)|}{|x-y|}.
        \end{align*}
        There is the following interpolation inequality between these two norms: for any $\varepsilon>0$, 
        \begin{equation}\label{eq: interpolation}
            |u|_{0;R}^{n}\leq \varepsilon [u]_{0,1;R}^{(n)}+C(n)\varepsilon^{-n}\int_{B_{R}}|u|
        \end{equation}

    Fix any $x,y\in B_{R} $ with $x\neq y$. Denote $d=d_{x,y}$. For any $t\in[0,1]$, set $z_{t}=tx+(1-t)y$. We have $B_{d}(z_{t})\subset B_{R}$. In particular, $B_{d/2}(z_{t})\subset B_{R-d/2}$. 

    From the fundamental theorem of calculus,
    \begin{equation}\label{eq: esti |u(x)-u(y)|}
         \begin{split}
             |u(x)-u(y)|=\left| \int_{0}^{1} \dfrac{\mathrm{d}}{\mathrm{d}t}u(z_{t})\,\mathrm{d}t \right|\leq \int_{0}^{1} |\nabla u(z_{t})|\,\mathrm{d}t\, |x-y|.
         \end{split}
    \end{equation}
    Applying Trudinger's gradient estimate \cite[P1258]{Trudinger-CPDE} in $B_{d/2}(z_{t})$, we obtain
    \begin{align*}
        |\nabla u(z_{t})|&\leq C(n)\left(\dfrac{1}{d}\sup_{B_{d/2}(z_{t})}|u|+d^2\sup_{B_{d/2}(z_{t})}|D\psi|\right)\\
        &\leq C(n)\left(\dfrac{1}{d}\sup_{B_{R-d/2}}|u|+d^2\sup_{B_{R}}|D\psi|\right).
    \end{align*}
    Then from \eqref{eq: esti |u(x)-u(y)|}, we have
    \begin{align*}
        d^{n+1}\dfrac{|u(x)-u(y)|}{|x-y|}&\leq C(n)\left( d^n\sup_{B_{R-d/2}}|u|+d^{n+3}\sup_{B_{R}}|D\psi| \right)\\
        &\leq C(n)\left(|u|_{0;R}^{n}+R^{n+3}\sup_{B_{R}}|D\psi| \right).
    \end{align*}
    Taking the supremum over $x,y$ and combining with the interpolation inequality \eqref{eq: interpolation} with $\varepsilon=1/2C(n)$, we finish the proof.
\end{proof}

\noindent\emph{Proof of (i) in Proposition \ref{PROP: sigma-k}.} \emph{Step 1. Approximation in the $L^{1}$ sense.}   By Lemma
\ref{LEMMA: Hessian as Radon measures}, the $k$-convexity of $u$ implies that the distributional Hessian of $u$ can be interpreted as  Radon measures $[D^2u]=[\mu^{ij}]$.  From the Lebesgue-Radon-Nikodym decomposition, we write  $\mu^{ij}=u^{ij}\,\mathrm{d}x+\mu^{ij}_{s}$, where $\mathrm{d}x$ denotes the $n$-dimensional Lebesgue measure, $u^{ij}\in L^{1}_{\mathrm{loc}}$ denotes the absolutely continuous part with respect to $\mathrm{d}x$, and $\mu^{ij}_{s}$ denotes the singular part. Write $[D^2u]=D^2u\, \mathrm{d}x+[D^2u]_{s}$, where $D^2u=(u^{ij})$ and $[D^2u]_{s}=[\mu^{ij}_{s}]$. For almost every $x\in B_{1}$, there hold
\begin{align}
    &\lim_{r\to 0}\fint_{B_{r}(x)}|D^2u(y)-D^2u(x)|\,\mathrm{d}y=0, \label{eq: 6.1} \\
    &\lim_{r\to 0}\dfrac{1}{r^n}\|[D^2u]_{s}\|(B_{r}(x))=0. \label{eq: 6.2}
\end{align}
Here $\|[D^2u]_{s}\|$ denotes the total variation of $[D^2u]_{s}$. By Remark \ref{RMK: Lipschitz Reg}, we know that $u$ is locally Lipschitz.  Rademacher's theorem tells us that $u$ is differentiable almost everywhere in $B_{1}$. In particular, for almost every $x\in B_{1}$, we also know that
\begin{equation}\label{eq: 6.3}
    \lim_{r\to 0}\fint_{B_{r}(x)}|Du(y)-Du(x)|\,\mathrm{d}y=0.
\end{equation}

Fix any $x$ such that $(\ref{eq: 6.1}) (\ref{eq: 6.2})$ and $(\ref{eq: 6.3})$ hold, we will show that $h(y)=o(|y-x|^2)$ as $y\to x$, where
\[ h(y)=u(y)-u(x)-Du(x)\cdot(y-x)-\dfrac{1}{2}(y-x)^{T}D^2u(x)(y-x). \]

 Following verbatim Steps 2–4 in the proof of Theorem 6.9 from \cite[P274-275]{Evans-Gariepy}, we can conclude the $L^{1}$ approximation:
\[ \fint_{B_{r}(x)}|h(y)|\,\mathrm{d}y=o(r^2),\qquad \mathrm{as}\ r\to0. \]

\emph{Step 2. Lipschitz estimate.} Fix any $r$ satisfying $0<2r<1-|x|$. Consider $g(y)=u(y)-u(x)-Du(x)\cdot(y-x)$. It is also a $k$-convex viscosity solution to the equation $\sigma_{k}(D^2g)=f$ on $B_{2r}(x)$. Since $f>0$ is Lipschitz, we can assume that $f$ has positive lower bound locally. For example, denote $f_{0}=\min_{\overline{B}_{1-|x|}(x)} f>0$, then $\psi=f^{1/k}$ is also Lipschitz in $B_{1-|x|}(x)$ with $\|D\psi\|_{L^{\infty}}\leq C(\|f\|_{C^{0,1}}, f_{0})$. Now, applying Lemma \ref{LEMMA: Gradient estimate}, we have
\begin{equation}
    \begin{split}
        r^{n+1}\sup_{\substack{y,z\in B_{r}(x)\\ y\neq z}}\dfrac{|g(y)-g(z)|}{|y-z|}&\leq \sup_{\substack{y,z\in B_{2r}(x)\\ y\neq z}}d_{y,z}^{n+1}\dfrac{|g(y)-g(z)|}{|y-z|}\\
        &\leq C\left(\int_{B_{2r}(x)}|g(y)|\,\mathrm{d}y+r^{n+2}\right)\\
        &\leq C\int_{B_{2r}(x)}|h(y)|\,\mathrm{d}y+ Cr^{n+2}
    \end{split}
\end{equation}
where $C$ depends on $n, \|D\psi\|_{L^{\infty}}$ and $|D^2u(x)|$. We emphasize that $C$ is independent on $r$. Note that
\[ (y-x)^{T}D^2u(x)(y-x)-(z-x)^{T}D^2u(x)(z-x)=(y+z-2x)^{T}D^2u(x)(y-z). \]
Then, we obtain the following Lipschitz estimate:
\begin{equation}\label{eq:6.5}
    \begin{split}
        \sup_{\substack{y,z\in B_{r}(x)\\ y\neq z}}\dfrac{|h(y)-h(z)|}{|y-z|}&\leq \sup_{\substack{y,z\in B_{r}(x)\\ y\neq z}}\dfrac{|g(y)-g(z)|}{|y-z|}+Cr\\
        &\leq \dfrac{C}{r}\fint_{B_{2r}(x)}|h(y)|\,\mathrm{d}y+Cr.
    \end{split}
\end{equation}

\emph{Step 4. Improve $L^{1}$ approximation to $L^{\infty}$.} For any small $\varepsilon>0$, we will find $r_{0}$ small, such that
\[ \dfrac{1}{r^2}\sup_{B_{r/2}(x)}|h|\leq 2\varepsilon \qquad \mathrm{for}\ r<r_{0}.   \]
Take $0<\eta<1/2$ small to be fixed later. By Step 2, we have
\begin{equation}\label{eq: 6.6}
    \mathcal{L}^{n}\left(\{z\in B_{r}(x): |h(z)|\ge\varepsilon r^2\}\right)\leq \dfrac{1}{\varepsilon r^2}\fint_{B_{r}(x)}|h(z)|\,\mathrm{d}z\leq \dfrac{o(r^{2})}{\varepsilon r^2}<\dfrac{1}{2}\eta^n \mathcal{L}^n(B_{r}),
\end{equation}
provided $r<r_{0}=r_{0}(n,\eta,\varepsilon)$ small. We claim that for any $y\in B_{r/2}(x)$, there exists $z\in B_{\eta r}(y) $ such that $|h(z)|<\varepsilon r^2$. Otherwise, we have $B_{\eta r}(y)\subset \{|h|\ge \varepsilon r^2\}\cap B_{r}(x)$, this implies
\[ \eta^n\mathcal{L}^{n}(B_{r})\leq \mathcal{L}^{n}\left(\{z\in B_{r}(x): |h(z)|\ge\varepsilon r^2\}\right)<\dfrac{1}{2}\eta^n \mathcal{L}^{n}(B_{r}). \]
It contradicts with (\ref{eq: 6.6}).  Therefore,
\begin{align*}
    |h(y)|\leq |h(z)|+\eta r \dfrac{|h(y)-h(z)|}{|y-z|}&\leq \varepsilon r^2+\eta r\left(\dfrac{C}{r}\fint_{B_{2r}(x)}|h|+Cr\right)\\
    &\leq \varepsilon r^2+\eta o(r^2)+ C\eta r^2. 
\end{align*}
We choose $\eta=\varepsilon/2C$ and reduce $r_{0}$ if needed, then we finally conclude that
\begin{align*}
    |h(y)|\leq  2\varepsilon r^2\qquad \mathrm{for}\ r<r_{0}=r_{0}(n,\varepsilon).
\end{align*}
Since $y\in B_{r/2}(x)$ is arbitrary, we finish the proof.\hfill\qed

Combining this almost everywhere twice differentiability result and our generalized small perturbation theorem, the partial regularity assertion in Proposition \ref{PROP: sigma-k} follows.
\ \\

\noindent\emph{Proof of (ii) in Proposition \ref{PROP: sigma-k}.} Fix any $0<\alpha<1$. Since $u$ is twice differentiable almost everywhere in $B_{1}$. It suffices to prove that $u$ is $C^{2,\alpha}$ near such twice differentiable points.
        
        For any twice differentiable point $x_{0}\in B_{1}$, there exists a quadratic polynomial $Q_{x_{0}}$, such that 
        \[ |u(x)-Q_{x_{0}}(x)|=o(|x-x_{0}|^2),\quad \text{as}\ x\to x_{0}. \]
        For $r>0$ small to be fixed later, consider the rescaled function 
        \[ v(x)=\dfrac{u(x_{0}+rx)-Q_{x_{0}}(x_{0}+rx)}{r^2},\quad \text{for}\ x\in B_{1}. \]
        Then $v$ is a viscosity solution to $G(D^2v)=\widetilde{f}$ on $B_{1}$, where
        \begin{align*}
            G(M)&=\sigma_{k}(M+D^2Q_{x_{0}})-\sigma_{k}(D^2Q_{x_{0}}),\\
            \widetilde{f}(x)&=f(x_{0}+rx)-f(x_{0}).
        \end{align*}
        Clearly, $G$ satisfies the hypotheses of the small perturbation Theorem \ref{theorem: main thm}. Let $\delta, C$ be the constants in Theorem \ref{theorem: main thm}. Now,
        \begin{align*}
            \|v\|_{L^{\infty}(B_{1})}&=\dfrac{\|u-Q_{x_{0}}\|_{L^{\infty}(B_{r}(x_{0}))}}{r^2}=\dfrac{o(r^2)}{r^2}<\delta,\\
            \|\widetilde{f}\|_{C^{0,\alpha}(B_{1})}&\leq \sup_{B_{r}(x_{0})}|f(x)-f(x_{0})|+r^{\alpha} [f]_{C^{0,\alpha}(B_{r}(x_{0}))}<\delta,
        \end{align*}
        provided $r$ is small. Fix this small $r$,  Theorem \ref{theorem: main thm} implies that $v\in C^{2,\alpha}(B_{1/2})$, then $u\in C^{2,\alpha}(B_{r/2}(x_{0}))$. The proof is now complete. \hfill\qed

\bibliographystyle{amsalpha}
\bibliography{ref}
\end{document}